\newtheorem{thm}{Theorem}[section]
\newtheorem{lem}[thm]{Lemma}
\newtheorem{prop}[thm]{Proposition}
\newtheorem{coro}[thm]{Corollary}
\theoremstyle{definition}
\newtheorem{ex}[thm]{Example}
\newtheorem{rem}[thm]{Remark}
\newtheorem{defn}[thm]{Definition}
\newcommand{\cA}{\mathcal{A}}
\newcommand{\cB}{\mathcal{B}}
\newcommand{\cE}{\mathcal{E}}
\newcommand{\rE}{\mathrm{E}}
\newcommand{\cF}{\mathcal{F}}
\newcommand{\cX}{\mathcal{X}}
\newcommand{\cP}{\mathcal{P}}
\renewcommand{\cH}{\mathcal{H}}
\newcommand{\cU}{\mathcal{U}}
\renewcommand{\cR}{\mathcal{R}}
\newcommand{\bI}{\mathbb I}
\newcommand{\rI}{\mathrm I}
\newcommand{\cS}{\mathcal S}
\newcommand{\Ksp}{\mathrm{K}_0^{\mathrm{sp}}}
\newcommand{\bZ}{\mathbb{Z}}
\newcommand{\bm}{\mathbf{mult}}
\newcommand{\bdh}{\mathbf{dimh}}
\newcommand{\Emb}{\mathrm{Emb}}
\let\mod\undefined
\DeclareMathOperator{\mod}{\mathrm{mod}}
\DeclareMathOperator{\mods}{\mathrm{mod}}
\DeclareMathOperator{\Hom}{\mathrm{Hom}}
\DeclareMathOperator{\End}{\mathrm{End}}
\DeclareMathOperator{\Ext}{\mathrm{Ext}}
\DeclareMathOperator{\proj}{\mathrm{Proj}}
\DeclareMathOperator{\inj}{\mathrm{inj}}
\let\Im\undefined
\DeclareMathOperator{\Im}{\mathrm{Im}}
\DeclareMathOperator{\Coker}{\mathrm{Coker}}
\DeclareMathOperator{\rk}{\mathrm{rk}}
\DeclareMathOperator{\brk}{\mathbf{rk}}
\DeclareMathOperator{\bdim}{\mathbf{dim}}
\DeclareMathOperator{\Ind}{\mathrm{Ind}}
\title{Invariants of persistence modules defined by order-embeddings}
\author{Claire Amiot, Thomas Brüstle, Eric J. Hanson}
\address{Claire Amiot, Institut Fourier, 100 rue des maths, 38402 Saint Martin d'H\`eres, Université Grenoble Alpes, Institut Universitaire de France}
\email{claire.amiot@univ-grenoble-alpes.fr}
\address{Thomas Brüstle, Départment de Mathématiques, Université de Sherbrooke, Sherbrooke, QC, J1K 2R1, Canada and Bishop’s University, Sherbrooke, QC, J1M 1Z7, Canada}
\email{Thomas.Brustle@USherbrooke.ca, tbruestl@bishops.ca}
\address{Eric J. Hanson, Department of Mathematics, North Carolina State University, Raleigh, NC 27695, USA}
\email{ejhanso3@ncsu.edu}
\begin{document}

\maketitle

\begin{abstract}
      One of the main objectives of topological data analysis is the study of discrete invariants for persistence modules, in particular when dealing with multiparameter persistence modules where complete invariants are unavailable. In many cases, the invariants studied for these non-totally ordered posets $\cP$ can be obtained from restricting a given module $M \in \mod \cP$ to a subposet $\cX$ of $\cP$ that is totally ordered (or more generally, of finite representation type), and then computing the barcode (or the general direct sum decomposition) over $\cX$. 

In this paper, we systematically study invariants arising from order-preserving embeddings of representation-finite posets $\cX$ into $\cP$. Given such an embedding, we associate to a $\cP$-module the multiplicities of indecomposable $\cX$-modules appearing in its restriction, and we analyze the resulting invariants as the embedding varies. This framework recovers classical invariants such as the rank invariant, and allows for a unified treatment of several constructions appearing in the literature.

The restriction functor from $\mod \cP$ to $\mod \cX$ is known to be exact and thus admits both left and right adjoint functors, known as induction and co-induction functors. This allows us to obtain new homological insights, and in particular, we describe bases of the images of these invariants, thereby generalizing the notion of signed barcodes.

We also address the redundancy arising from considering all embeddings of a fixed poset by introducing families of embeddings of increasing size and restricting attention to selected indecomposable modules. This leads to more efficient descriptions of the invariants and yields new bases, including interval-module bases that generalize previously known constructions.
\end{abstract}

\setcounter{tocdepth}{1}
\tableofcontents

\section{Introduction}
\subsection{Setting} 

The study of discrete invariants for persistence modules is one of the main objectives of topological data analysis (TDA). When a pointwise finite-dimensional persistence module $M$ is defined over a totally ordered poset $\cP$, it decomposes into a finite sum of intervals. This data is called the \emph{barcode} of $M$ and completely describes the module up to isomorphism, see \cite[Theorem~1.2]{BCB} and \cite[Section~3.6]{GR}. 
However, when studying multiparameter persistence modules, the underlying poset $\cP$ usually has wild representation type, and thus does not admit such a complete discrete invariant. We refer to  the summary article \cite{BL} for an introduction to multiparameter persistence modules, and why it is relevant in practice to study more general posets than totally ordered ones.

To address the need of TDA for discrete invariants even in the multiparameter case, several invariants have been introduced and studied: the dimension vector $\bdim_\cP$ (aka Hilbert function), the rank invariant $\brk_\cP$, and a number of generalizations thereof, see \cite{BL,BOOS} for an overview of recently studied invariants.

It turns out that, in many cases, these invariants studied for non-totally ordered posets $\cP$ can be obtained from restricting a given module to a subposet $\cX$ of $\cP$ that is totally ordered (or more generally, of finite representation type), and then computing the barcode (or the general direct sum decomposition) over $\cX$. This approach has been described in e.g. 
 \cite{AENY2, DKM,HNOX,La,LW} introducing certain subposets $\cX$ having the form of linear subposets, ``zigzags'', or  ``tours'' and ``courses'', all referring to a poset whose indecomposable representations can be described via intervals.  The full classification of those finite posets whose indecomposable representations can all be described via intervals can be found in \cite{chaptal} and \cite[Section~5]{AET}.

In this paper, we consider general embeddings of  representation-finite subposets $\cX$ into $\cP$ and study invariants obtained by decomposing the restriction of  a given $\cP$-module $M$ to  $\cX$ into its indecompsable summands.
The restriction functor  $\mod \cP \to \mod \cX$ is well-studied, and it is known to be exact and admits both left and right adjoint functors. This allows one to obtain new homological insights, and also to  re-interpret previous results. It also allows one to determine bases, thus generalizing the concept of signed barcodes from \cite{BOO}.

To fix notation, consider an {\em order-embedding} $f: \cX \to \cP$; that is, a poset morphism that is an isomorphism onto its image.
The restriction of a $\cP$-module $M$ to $f(\cX)$ can be described completely by counting the multiplicities of indecomposable direct summands. We assume $\cX$ to be representation-finite, and thus obtain a discrete invariant
  $\bm_{\cX,\cP}^f$ of rank $|\Ind\cX|$, the number of indecomposable $\cX$-modules. 
Repeating this process for all possible embeddings $f: \cX \to \cP$ and collecting the result into a vector indexed over all embeddings, we obtain the invariant  $\bm_{\cX,\cP}$ describing all multiplicities of indecomposable \linebreak $\cX$-modules obtained from all possible ways to restrict the $\cP$-module $M$ to a subposet of form $\cX$. One also defines an equivalent invariant $\bdh_{\cX,\cP}$ by calculating the Hom-dimensions from the set of indecomposable \linebreak $\cX$-modules to the restrictions of the persistence module along all order embeddings. See Definition~\ref{def:embedding} and Theorem~\ref{thm::mult_dimhom_invariants} for details.

Consider for example the poset $\cX_1$ with one element. Then $\cX_1$ has only one indecomposable representation, and its multiplicity in a general module is given by the dimension. Thus listing all possible embeddings of $\cX_1$ into  $\cP$ and counting multiplicities amounts to describing the dimension vector, so $\bm_{\cX_1,\cP}=\bdim_\cP$.
Consider now the poset $\cX_2 =\{1\to 2\}$. This poset is representation-finite, and the multiplicity of the indecomposable representation given by the interval $[1,2]$ can be computed by calculating the rank of the map $M_1 \to M_2$.
This allows one to show that the multiplicity invariant $\bm_{\cX_2,\cP}$ is equivalent to the well-studied rank invariant 
$\brk_\cP$, see Corollary
\ref{corX_2Rank}.
More generally, as discussed in Example  \ref{ex:restrict},
the ``generalized rank invariant'' of \cite{KM} and the ``compressed multiplicities'' of \cite{AENY2} both record the multiplicity of the sincere interval modules $\mathbb{I}_{\cX}$ over certain subsets $\cX \subseteq \cP$ (see also \cite{BDL} for a more general discussion of the relation between these multiplicities and ranks).


\subsection{Results}
For several subposets $\cX$, we address the question of how the multiplicities in $\bm_{\cX,\cP}$ can be computed in terms of linear algebra. See Examples \ref{Ex::mult2} and \ref{Ex::mult4} and the calculations in Section \ref{sec::order-embeddings}. 
In each case, the multiplicities of indecomposable summands of $M$ can be obtained computing ranks or intersections given by the structure maps of the module $M$.
More generally, we show in Theorem \ref{thm::mult_dimhom_invariants}
that the invariant $\bm_{\cX,\cP}$ can be obtained by calculating the Hom-dimensions from the set of indecomposable representations to the given persistence module, thus reducing the problem to solving a system of linear equations. 

Besides computability, the second and equally important quality of an invariant for persistence modules is the stability. For the rank invariant, this question has been addressed  in \cite{La}, and further explored in \cite{BOO,BOOS} where the authors
single out a class of special persistence modules, the ``hook modules'', which are defined as indicator modules of certain shapes called hooks: For $a < b$ in $\cP$, let $[a,b[ \ = \{c \in \mathcal{P}\mid a \leq c \not\geq b\}$. The interval module $\mathbb{I}_{[a,b[}$ is called a \emph{hook module}. Note that $\mathbb{I}_{[a,b[}$ is the cokernel of the natural map $\mathbb{I}_{[b,\infty)} \rightarrow \mathbb{I}_{[a,\infty)}$ between indecomposable projective $\cP$-modules.
It is shown in \cite{BOO} that the hook modules play the role of projective objects in a relative homology theory given by some exact structure $\mathcal{E}$, thus every persistence module $M$ admits a resolution by hook modules. One can then show that these hooks form a basis for the (image of the) rank invariant (see Definition \ref{def::basis}), which in turn allows one to define a ``signed barcode'', leading to a stability result for the considered invariant \cite{BOOS}.

The viewpoint of relative homological algebra has further been extended in many recent works (see e.g. \cite{AENY,BBH,BBH2,BOO,BOOS,KTH,OS}), but we are only now able to conceptually explain the appearance and importance of hook modules and their generalizations:
Studying embeddings  $f: \cX \to \cP$, the corresponding restriction functor $ f^* :\mod \cP \to \mod \cX$ admits both a left adjoint $f_! : \mod \cX \to \mod \cP$ (called coinduction) and a right adjoint $f_* : \mod \cX \to \mod \cP$ (called induction).
One can then use
the adjunction formula to show that the set 
$$\cH = \{ f_!U \mid U \mbox{ indecomposable } \cX\mbox{-module and } f:\cX \to \cP  \mbox{ an embedding} \}$$
forms exactly the indecomposable relative projective objects for the exact structure defined as the class $\cE$ of short exact sequences
    $0 \rightarrow L \rightarrow M \rightarrow N \rightarrow 0$
    in $\mods \cP$ which have the property that
    $$0 \rightarrow f^*L \rightarrow f^* M \rightarrow f^* N \rightarrow 0$$
    splits for all embeddings $f: \cX \to \cP$. See Proposition \ref{prop::relative-proj}.
Coming back to the rank invariant, we study embeddings of the poset $\cX_2$, and since every indecomposable $\cX_2$-module is the cokernel of a map between indecomposable projectives, we see easily that the set $\cH$ above is formed exactly by the hook modules in this case.

Moving on to embeddings of more general posets, we consider $\cX=\cX'_3$ or $\cX''_3$:

\[
  \scalebox{0.8}{
  \begin{tikzpicture}

\begin{scope}[xshift=4cm]

\node(x) at (0,0) {$\cX'_3=$};
\node (A) at (1,0) {$1$};
\node (B) at (2,0.5) {$2$};
\node (C) at (2,-0.5) {$3$};
\draw[->] (A)--(B);
\draw[->] (A)--(C);
\end{scope}

\begin{scope}[xshift=8cm]
\node(x) at (0,0) {$\cX''_3=$};
\node (A) at (1,0.5) {$1$};
\node (B) at (1,-0.5) {$2$};
\node (C) at (2,0) {$3$};
\draw[->] (A)--(C);
\draw[->] (B)--(C);
\end{scope}
 \end{tikzpicture}}\]
We notice that the corresponding generalized hook modules are not always given just by specifying a subposet of $\cP$, but they are given by vector spaces of dimension two at some vertices, see Example 
\ref{ex::f!U modules}. These can be illustrated  for the $n \times m$ grid  (the poset $\{1,2, \ldots ,n\}\times \{1,2, \ldots ,m\}$ with the usual product order) as follows: 
\[ \scalebox{0.8}{
  \begin{tikzpicture}

 \begin{scope}[xshift=-8cm]

\draw (0,0)--(6,0)--(6,4)--(0,4)--(0,0);

\draw[blue!30, fill=blue!30] (1,1)--(6,1)--(6,2)--(4,2)--(4,3)--(2,3)--(2,4)--(1,4)--(1,1);

\draw[thick] (1,4)--(1,1)--(6,1);
\draw[thick, loosely dotted] (6,2)--(4,2)--(4,3)--(2,3)--(2,4);
\node at (0.8,0.8) {$f(1)$};
\node at (4.3,2.2) {$f(2)$};
\node at (2.3,3.2) {$f(3)$};
\node at (2,2) {$k$};

\node at (3,-0.5) {$\cX=\cX'_3$};

\end{scope}

\draw (0,0)--(6,0)--(6,4)--(0,4)--(0,0);

\draw[blue!30, fill=blue!30] (1,2)--(2,2)--(2,4)--(1,4)--(1,2);

\draw[blue!30, fill=blue!30] (2,1)--(6,1)--(6,2)--(2,2)--(2,1);

\draw[blue!30, fill=blue!30] (4,3)--(6,3)--(6,4)--(4,4)--(4,3);

\draw[blue!40, fill=blue!40] (2,2)--(6,2)--(6,3)--(4,3)--(4,4)--(2,4)--(2,2);

\node at (4.2,3.2) {$f(3)$};
\node at (0.8,2) {$f(2)$};
\node at (2,0.8) {$f(1)$};
\node at (1.5,3) {$k$};
\node at (4,1.5) {$k$};
\node at (5,3.5) {$k$};
\node at (3,3) {$k^2$};

\node at (3,-0.5) {$\cX=\cX''_3$};

\end{tikzpicture}}\]
Already in \cite{BOO}, they specified a second basis for the rank invariant, given by rectangle modules $\mathbb I_{ [a , b]}$ with support $[a,b]$, which are somewhat easier to deal with than the hook modules  (note that one needs to consider right-open rectangles when the poset $\cP$ is infinite). We aim to generalise this result here in terms of general embeddings:
Given  a poset embedding $f: \cX \rightarrow \cP$ and $U \in \mods \cX$, the adjunction formulas yield a natural map $\theta_fU: f_! U \rightarrow f_*U$. We denote  $\Theta_fU = \mathrm{Im}(\theta_fU)$.
   (This is the so-called ``intermediate extension functor'', see \cite[Section~4.2]{AET}  for further details and historical context.)
    One easily verifies that in case $\cX=\cX_2$, the set 
$$\cR = \{ \Theta_fU \mid U \mbox{ indecomposable } \cX\mbox{-module and } f:\cX \to \cP  \mbox{ an embedding } \}$$ 
coincides with the set of rectangle modules.
However, before moving on to find bases in a more general setting, we aim to reduce redundancy. Considering sets like 
$$\cH = \{ f_!U \mid U \mbox{ indecomposable } \cX\mbox{-module and } f:\cX \to \cP  \mbox{ an embedding} \}$$ 
produces the same $\cP$-modules several times from different embeddings $f$ and different indecomposables $U$,  for instance the simple modules are double counted whenever the images of two different embeddings $f_1,f_2:\cX \to \cP$ intersect. It is therefore more efficient to first collect all simple $\cP$-modules (once) via embeddings of the one-point poset $\cX_1$, then consider all two-dimensional $\cP$-modules obtained from the interval $[1,2]$ by embedding the poset $\cX_2 =\{1\to 2\}$ into $\cP$ and so on.
In other words, instead of studying the set of all indecomposables obtained from embeddings of one poset $\cX$, we study now embeddings of several posets of increasing sizes, but limit attention to only some indecomposables (that have not been obtained from embedding of smaller posets previously).
The setting we consider here is where we fix a family of posets  $\cF=\{\cX_i\}$ together with certain embeddings into $\cP$, and then we define $\bm_{\cF}$ to count the multiplicity of the characteristic indecomposable module $\mathbb{I}_{\cX_i}$ in the restriction of $M$ to $\cX_i$ along the various embeddings. So only one indecomposable is considered for each poset embedding.
As discussed in Example~\ref{ex:iterated_mult}, both the generalized rank invariant of \cite{KM} and the various compressed multiplicities of \cite{AENY2} can be realized in the form $\bm_\cF$. 

As explained in Section~\ref{sec:families},
there are examples of the invariants $\bm_{\cX,\cP}$ which are equivalent to invariants of the form $\bm_\cF$. In such cases, the perspective of iterated embeddings is useful for removing some of the redundant information stored in the invariant $\bm_{\cX,\cP}$. Aside from this decrease in redundancy, this change in perspective also allows us to describe explicit bases of some of the invariants $\bm_{\cX,\cP}$. See Theorem~\ref{thm::IntervalBasis}.

With  $\cF:=\{\cX_1,\cX_2\}$ for $\cX_1$ and $\cX_2$ the one- and two-point poset as before, we can recover the result that the set $\cR$ (the rectangles in this case)  forms a basis for $\bm_\cF$.

\begin{thm} Let $\cP$ be a finite connected poset, and let $\cX$ be one of the following posets

\[
  \scalebox{0.8}{
  \begin{tikzpicture}

\begin{scope}[xshift=4cm]

\node(x) at (0,0) {$\cX'_3=$};
\node (A) at (1,0) {$1$};
\node (B) at (2,0.5) {$2$};
\node (C) at (2,-0.5) {$3$};
\draw[->] (A)--(B);
\draw[->] (A)--(C);
\end{scope}

\begin{scope}[xshift=8cm]
\node(x) at (0,0) {$\cX''_3=$};
\node (A) at (1,0.5) {$1$};
\node (B) at (1,-0.5) {$2$};
\node (C) at (2,0) {$3$};
\draw[->] (A)--(C);
\draw[->] (B)--(C);
\end{scope}
 \end{tikzpicture}}\]
Assume that 
for any $a<b$ in $\cP$ there exists $f\in \Emb_{\cX}^\cP$ with $\{a,b\}\in \Im f$, then the set 
$$\cR = \{\Theta_f(U) \mid (f,U)\in \Emb_{\cX}^\cP \times \Ind \cX\}$$
forms a basis of $\bm_{\cX,\cP}$.

 \[
  \scalebox{0.8}{
  \begin{tikzpicture}

\draw (0,0)--(6,0)--(6,4)--(0,4)--(0,0);

\draw[fill=blue!30] (1,1)--(4,1)--(4,2)--(2,2)--(2,3)--(1,3)--(1,1);

\node at (1,1) {$\bullet$};
\node at (0.8,0.8) {$f(1)$};
\node at (4,2) {$\bullet$};
\node at (4.2,2.2) {$f(2)$};
\node at (2,3) {$\bullet$};
\node at (1.6,1.6) {$k$};
\node at (2.2,3.2) {$f(3)$};
\node at (2,-0.5) {$\cX=\cX'_3$
};

\begin{scope}[xshift=8cm]
\draw (0,0)--(6,0)--(6,4)--(0,4)--(0,0);

\draw[fill=blue!30] (1,2)--(2,2)--(2,1)--(4,1)--(4,3)--(1,3)--(1,2);

\node at (4.2,3.2) {$f(3)$};
\node at (0.6,2) {$f(2)$};
\node at (2,0.8) {$f(1)$};
\node at (1,2) {$\bullet$};
\node at (2,1) {$\bullet$};
\node at (4,3) {$\bullet$};
\node at (3,2) {$k$};
\node at (2,-0.5) {$\cX=\cX''_3$};
\end{scope}
\end{tikzpicture}}\]
 We note that the  generalized rectangle modules described in $\cR$ are always given by specifying a subposet of $\cP$, with all vector spaces of dimension zero or one.
\end{thm}

\subsection{Acknowledgements}
Claire Amiot is supported by the ANR project CHARMS and by the Institut Universitaire de France.
Thomas Br\"ustle was supported by the CRM (Centre de recherches math\'ematiques, Montreal) and Eric Hanson by IUF to visit Institut Fourier, Grenoble. Most of this project was realised during these stays. Thomas Brüstle and Eric Hanson were partially supported by NSERC Discovery Grant RGPIN/04465-2019. Eric Hanson was also partially supported by the Canada Research Chairs program CRC-2021-00120, NSERC Discovery Grant RGPIN-2022-03960, and an AMS-Simons travel grant. A portion of this work was completed while Eric Hanson was a postdoctoral student at l'Université de Sherbrooke. The authors also thank Benjamin Blanchette, Steffen Oppermann, Steve Oudot, and Luis Scoccola for stimulating discussions on the subject.

\section{$\bZ$-linear invariants }\label{section::invariants}

For a finite poset $\cP$,
 we study the category $\mod \cP$ as the category of  functors $\cP\to \mod k$ for some fixed field $k$. It is a $k$-linear abelian category with finite dimensional $\Hom$-spaces which satisfies the Krull-Schmidt property; that is, every object is isomorphic to a finite direct sum of indecomposable objects, and this decomposition is unique up to permutation. 
 
An object $M\in \mod \cP$ is also called a $\cP$-module.  We denote by $M_a$ the vector space $M(a)$, and by $M_{a\to b}$ the element in $\Hom_k(M_a,M_b)$ given by the functor $M$ applied to the pair $a\leq b\in \cP$ .
We further denote  $\bdim_a M :=\dim_k M_a$, and  $\brk_{a,b} M:=\rk M_{a\to b} $.

The \emph{split Grothendieck group $\Ksp(\cP)$  of $\mod \cP$} is the free abelian group generated by isoclasses $[M]$ of objects $M$ in $\mod \cP$ modulo the relation $[M\oplus N]=[M]+[N]$ for any objects $M,N$ in $\mod \cP$. 
Fixing  a set $\Ind \cP$ of representatives of isoclasses of the indecomposable $\cP$-modules, the set $\{[U]\in \Ksp(\cP) \mid U\in \Ind \cP\}$ forms a basis for $\Ksp (\cP)$.  In other words, decomposing $M$ into indecomposable direct summands $M\simeq \bigoplus_{U\in \Ind\cP}U^{\alpha_U}$, the vector  $(\alpha_U)_{U\in \Ind\cP}$ uniquely determines $M$  up to isomorphism, by the Krull-Schmidt property.

We say that a poset $\cP$ is of \emph{finite representation type (or representation-finite)} if $\Ind \cP$ is finite. Therefore the free abelian group $\Ksp (\cP)$ is finitely generated if and only if $\cP$ is of finite representation type. 

Some indecomposable $\cP$-modules have been object of intense studies in the persistence theory literature. One example of such modules are the {\em rectangle modules} $\mathbb I_{ [a , b]}$
(with support $[a,b]$), see e.g. \cite{BLO,KM}: 
Every pair $a \le b$ in $\cP$ yields an indecomposable $\cP$-module  $U=\mathbb I_{ [a , b]}$ with  $U_x = k$ for all $a \le x \le b$ in $\cP$ and $U_{x\to y}$ the identity map for all $a \le x \le y \le b$ in $\cP$, while all other vector spaces and linear maps are zero. 

More generally, a subset $\cS \subseteq \cP$ is called an \emph{interval} if $S$ is order convex (in the sense that any $a \leq b \leq c$ in $\mathcal{P}$ with $a, c \in S$ also has $b \in S$) and connected (in the sense that any $a, b \in \cS$ are related by an alternating chain $a = s_0 \leq s_1 \geq \cdots \geq s_k = b$ with each $s_i \in \cS$). Each interval determines an \emph{interval module}\footnote{Note that interval modules are also called ``spread modules'' in some works, e.g. \cite{BBH,KTH}.} $\mathbb{I}_{\cS}$ analogous to the rectangle module $\mathbb{I}_{[a,b]}$. Note that interval modules are always indecomposable, see e.g. \cite[Proposition~2.2]{BL2}. These modules have featured prominently in many recent works, see e.g. Sections 9 and 10 of the survey article \cite{BL} and the references therein. We will consider the following special classes of interval modules in this paper.

\begin{ex}\label{ex:intervals}
    \begin{enumerate}
        \item For $a \in \mathcal{P}$, let $[a,\infty) = \{b \in \mathcal{P} \mid a \leq b\}$. The interval module $P_a:= \mathbb{I}_{[a,\infty)}$ is an indecomposable projective, and all indecomposable projective $\mathcal{P}$-modules have this form.
        \item For $a \in \mathcal{P}$, let $(-\infty,a] = \{b \in \mathcal{P} \mid a \geq b\}$. The interval module $I_a:=\mathbb{I}_{(-\infty,a]}$ is an indecomposable injective. Note also that all indecomposable injective $\mathcal{P}$-modules have this form.
        \item For $a \in \mathcal{P}$, the interval module $S_a:= \mathbb{I}_{\{a\}}$ (which is also a rectangle module since $\{a\} = [a,a]$) is the simple top of $P_a$ and the simple socle of $I_a$. Note that all simple $\mathcal{P}$-modules have this form.
        \item For $a < b$, let $[a,b[ \ = \{c \in \mathcal{P}\mid a \leq c \not\geq b\}$. The interval module $H_{a,b}:= \mathbb{I}_{[a,b[}$ is called a \emph{hook module}, see \cite{BOO}. Note that $H_{a,b}$ is the cokernel of the natural injective map $P_b \rightarrow P_a$.
        \item For $a < b$, let $]a,b] \ = \{c \in \mathcal{P} \mid b \geq c \not\leq a\}$. The interval module $C_{a,b}:=\mathbb{I}_{]a,b]}$ is called a \emph{cohook} module. Note that $C_{a,b}$ is the kernel of the natural surjective map $I_a \rightarrow I_b$.
        \item If $\cP$ is connected, then it admits a unique sincere interval module $\mathbb{I}_\cP$. (Recall that \emph{sincere} means that $(\mathbb{I}_\cP)_a \neq 0$ for all $a \in \cP$.)
        
    \end{enumerate}
\end{ex}

\subsection{Definitions and first examples}

\begin{defn}
A \emph{$\bZ$-invariant} (or \emph{invariant} for short) on $\mod \cP$ is a $\mathbb Z$-linear map $\Phi:\Ksp(\cP)\longrightarrow \bZ^{I}$ for some set $I$. In other words, $\Phi$ can be viewed as a map $\mod \cP\to \bZ^{I}$ which is constant on isoclasses of modules and such that $\Phi(M\oplus N)=\Phi(M)+\Phi(N)$.  An invariant $\Phi$ is \emph{finite} if its image is finitely generated. We denote by $\rk \Phi$ the rank of its image. 
 We say that $\Phi:\Ksp(\cP)\longrightarrow \bZ^{I}$ is a {\em complete} invariant if 
 $\Phi$ is injective. 
\end{defn}

\begin{ex}\label{example:invariants}\,
\begin{enumerate}
\item Counting the multiplicities of all indecomposable direct summands yields an invariant  $\bm_\cP: \Ksp(\cP)\to \bZ^{\Ind \cP}$ of rank $|\Ind\cP|$. It is defined by $\bm_\cP(M)_U=\alpha_U$ where $M\simeq \bigoplus_{U\in \Ind\cP}U^{\alpha_U}$ is the decomposition of $M$ into indecomposable objects. If $\cP$ is a poset of finite representation type, then $\bm_\cP$ is a finite invariant. 
More generally, if $\cU\subseteq \Ind\cP$ is a finite family of indecomposable modules, we define $\bm_{\cP}^\cU:\Ksp(\cP)\to \bZ^{\cU}$ by composing $\bm_\cP$ with the natural projection of $\bZ^{\cU} \to \bZ^{\Ind \cP}$.

For instance, if $\cU = \{\mathbb I_{ [a , b]} \mid a \le b\}$ is the set of all rectangle modules then a module $M$ is called {\em rectangle-decomposable} if $\bm_\cP(M)_U= 0 $ for $U \not\in \cU$, or equivalently when $\bm_{\cP}(M)$ is obtained from $\bm_{\cP}^\cU(M)$ by the natural inclusion of $\bZ^{\cU} \to \bZ^{\Ind \cP}$.

\item 
For $a\in \cP$, $\bdim_a:\Ksp (\cP)\to \bZ$ is an invariant of rank $1$  given by $\bdim_a(M) = \dim_k M_a$.  The map $\bdim_\cP:=\bigoplus _{a\in\cP}\bdim_a$ is an invariant of rank $|\cP|$.  This invariant is often known as the \emph{dimension vector} or \emph{Hilbert function}.

\item 
Denote the set of ordered pairs in $\cP$ by $$\{\leq_\cP\}:=\{(a,b)\in \cP^2 \mid a\leq b\}.$$ For $(a,b)\in  \{\leq_\cP\}$, the map $\brk_{a,b}:\Ksp(\cP)\to \bZ$  given by $\brk_{a,b}(M) = \rk M_{a\to b}$ is an invariant of rank $1$. The map $\brk_\cP:=\bigoplus_{(a,b)\in \{\leq_\cP\}}\brk_{a,b}$ is an invariant of rank $|\{\leq_\cP\}|$ called the \emph{rank invariant} (see \cite{CZ,BOO}). 

\item For any finite family $\cU\subseteq \Ind\cP$, the map $\bdh_{\cP}^{\cU}:\Ksp(\cP)\to \bZ^{\cU}$ defined by $\left(\bdh_{\cP}^{\cU}(M)\right)_U:=\dim_k \Hom_\cP(U,M)$ for $U\in\cU$ is an invariant of rank $|\cU|$. Following \cite{BBH},  invariants of this form are called  \emph{dim-Hom invariants}. 

\end{enumerate}

\end{ex}

\subsection{Comparing invariants}

Of course, a finite invariant can only be  complete when the poset $\cP$ is representation-finite. One goal of persistence theory is to extract useful information from $M \in \mod \cP$ by some finite invariants even when the poset $\cP$ is  not representation-finite. The following definitions provide tools to compare different invariants.

\begin{defn}
Two invariants $\Phi_1,\Phi_2$ on $\mod \cP$ are \emph{equivalent} (in symbols $\Phi_1\simeq \Phi_2$) if there exists an isomorphism $\varphi: \Im \Phi_1\to \Im \Phi_2$ such that $\varphi\circ \Phi_1=\Phi_2$. 
\end{defn}

\begin{defn}
Let $\Phi_1,\Phi_2$ be two  invariants on $\mod \cP$. We say that $\Phi_1$ is \emph{finer} than $\Phi_2$, and denote it by $\Phi_1\geq \Phi_2$, if there exists a $\bZ$-linear map $\varphi: \Im \Phi_1\to \Im \Phi_2$ such that $\varphi\circ \Phi_1=\Phi_2$ (in particular, $\varphi$ is surjective). This defines a partial order on the equivalence classes of invariants on $\mod \cP$. 
\end{defn}

 \begin{ex}\label{ex:compare_invariants}\,
\begin{enumerate}

\item Let $\cP$ be a poset of finite representation type. Then the invariant $\bm_\cP$ is complete, and thus $\bm_\cP$ is finer than any other $\bZ$-invariant.

\item 
The diagonal inclusion $\cP\to \{\leq_\cP\}$ induces an  injective map $\varphi:\bZ^{\cP}\to \bZ^{\{\leq_\cP\}}$  whose transpose $\varphi^T:\bZ^{\{\leq_\cP\}}\to \bZ^{\cP}$ is surjective and satisfies $\varphi^T\circ \brk_\cP=\bdim_\cP$, hence $\brk_\cP\geq \bdim_\cP$. In general, the rank invariant is strictly finer than the dimension vector, in fact for a connected poset $\cP$ the rank invariant is equivalent to the dimension vector if and only if $|\cP|=1$. 
It is a classical result in representation theory that the rank invariant $\brk_\cP$ is complete for a totally ordered poset $\cP$.

\item Let ${\rm Proj}  \; \cP \subseteq \Ind \cP$ the subset of indecomposable projective modules (see Example~\ref{ex:intervals}(1)). 
It is classical that $\bdim_\cP=\bdh^{\rm Proj \cP}_\cP$, so these invariants are equivalent. See \cite[Proposition 5.1]{BBH}.

\item Let $\cH\subset \Ind \cP$ be the subset formed by the hook modules and the projective indecomposable modules (see Example~\ref{ex:intervals}). It is shown in \cite{BOO} and \cite{BBH} that the invariants $\brk_\cP$ and $\bdh_{\cP}^{\cH}$ are equivalent. 
\end{enumerate}

\end{ex}

\begin{prop}\label{prop::mult-dimhom}
Let $\cP$ be a poset of finite representation type. Then the invariants $\bm_\cP$ and $\bdh_{\cP}^{\Ind \cP}$ are equivalent.
\end{prop}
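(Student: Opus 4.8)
The plan is to exhibit an explicit invertible change of basis matrix between the two invariants, which amounts to the classical fact that for a representation-finite algebra the Hom-dimension vectors $(\dim_k\Hom_\cP(U,-))_{U\in\Ind\cP}$ detect modules up to isomorphism, just as the multiplicity vectors do. Since $\cP$ is representation-finite, $\Ksp(\cP)$ is free of finite rank $n:=|\Ind\cP|$ with basis $\{[U]\mid U\in\Ind\cP\}$, and both $\bm_\cP$ and $\bdh_\cP^{\Ind\cP}$ are $\bZ$-linear maps out of this group. By definition $\bm_\cP$ sends $[U]$ to the standard basis vector $e_U\in\bZ^{\Ind\cP}$, so $\bm_\cP$ is an isomorphism onto $\bZ^{\Ind\cP}$; in particular $\rk\bm_\cP=n$. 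It therefore suffices to show that $\bdh_\cP^{\Ind\cP}$ is also injective with image of rank $n$, because then $\varphi:=\bdh_\cP^{\Ind\cP}\circ(\bm_\cP)^{-1}$ is the desired isomorphism $\Im\bm_\cP\to\Im\bdh_\cP^{\Ind\cP}$ satisfying $\varphi\circ\bm_\cP=\bdh_\cP^{\Ind\cP}$.

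First I would record the matrix of $\bdh_\cP^{\Ind\cP}$ with respect to the basis $\{[U]\}$: its entries are the nonnegative integers $c_{V,U}:=\dim_k\Hom_\cP(V,M_U)$ evaluated on $[U]$, i.e. $c_{V,U}=\dim_k\Hom_\cP(V,U)$, forming the ``Cartan-like'' matrix $C=(c_{V,U})_{V,U\in\Ind\cP}$. The claim reduces to showing $C$ is invertible over $\bZ$ (or at least over $\bQ$, which already gives $\rk\bdh_\cP^{\Ind\cP}=n$ and hence the equivalence after composing with the projection onto the image). To see this, order $\Ind\cP$ compatibly with a suitable partial order — for instance refine the preorder where $V\preceq U$ if $\Hom_\cP(V,U)\neq 0$; since $\cP$ is a finite poset, $\mod\cP$ has no oriented cycles of nonzero non-isomorphisms among indecomposables, so this preorder is a genuine partial order and any linear extension makes $C$ triangular. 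Along the diagonal $c_{U,U}=\dim_k\End_\cP(U)$; over an arbitrary field an indecomposable need not have $\End_\cP(U)=k$, but $\End_\cP(U)$ is local, so $\dim_k\End_\cP(U)\geq 1$, and $C$ is triangular with nonzero diagonal entries, hence $\det C=\prod_U\dim_k\End_\cP(U)\neq 0$. This establishes that $\bdh_\cP^{\Ind\cP}$ is injective, completing the argument.

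The main subtlety — and where I would be most careful — is the existence of the triangular ordering: I need that the relation $V\preceq U\iff\Hom_\cP(V,U)\neq0$ on $\Ind\cP$ is antisymmetric, equivalently that $\Hom_\cP(V,U)\neq0$ and $\Hom_\cP(U,V)\neq0$ with $V\not\cong U$ cannot both happen. For $\mod\cP$ with $\cP$ a finite poset this follows because the incidence algebra $k\cP$ is directed (one can order the elements of $\cP$ so that the quiver has no oriented cycles), so $k\cP$ has finite global dimension and admits no such ``mutually mapping'' pair of non-isomorphic indecomposables; alternatively one invokes that $k\cP$ is a directed algebra and cites the standard fact (e.g.\ from Auslander--Reiten theory) that its module category has no cycles through indecomposables that would obstruct the triangularity. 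Once this is in place, the rest is the routine linear algebra above. Finally, if one wants the cleaner statement that $\varphi$ is an \emph{isomorphism of abelian groups} $\Im\bm_\cP\to\Im\bdh_\cP^{\Ind\cP}$ rather than merely a $\bQ$-linear isomorphism onto the $\bZ$-span, one notes that $\varphi$ is by construction the restriction of the $\bZ$-linear map $C\circ(\bm_\cP)^{-1}$ and that its inverse is realized on $\Im\bdh_\cP^{\Ind\cP}$ by $\bm_\cP\circ C^{-1}$; since $\bm_\cP$ is already an isomorphism onto $\bZ^{\Ind\cP}$ and $\Im\bdh_\cP^{\Ind\cP}=C(\bZ^{\Ind\cP})$, the map $\varphi$ is a bijection between these two subgroups, so $\bm_\cP\simeq\bdh_\cP^{\Ind\cP}$ as required.
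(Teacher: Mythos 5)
Your overall strategy mirrors the paper's: express $\bdh_\cP^{\Ind\cP}$ as the Cartan-like matrix $C = (\dim_k\Hom_\cP(V,U))$ composed with $\bm_\cP$, then use a triangular ordering coming from the absence of oriented cycles among indecomposables to show $C$ is invertible. Your closing observation that $\det C\neq 0$ already suffices — because the definition of equivalence only asks for an isomorphism onto the image, so $C:\bZ^n\to C(\bZ^n)$ being a bijection is enough — is correct, and is a nice (if unnecessary) hedge.

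However, the paragraph you flag as ``the main subtlety'' contains a genuine error of justification. You claim that antisymmetry of the Hom-preorder on $\Ind\cP$ follows because the \emph{quiver of the incidence algebra $k\cP$ is acyclic} (``$k\cP$ is a directed algebra''). That implication is false: acyclicity of the Gabriel quiver of the algebra does not imply the absence of cycles of nonzero non-isomorphisms among indecomposable \emph{modules}. Path algebras of acyclic quivers of infinite representation type (e.g.\ the Kronecker quiver) provide counterexamples with abundant such cycles. What actually makes the triangularity work is the hypothesis that $\cP$ is of \emph{finite representation type}: this is precisely where the paper invokes Simson's theorem \cite[Theorem~1.4]{simson} that representation-finite posets are representation-directed. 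Without using representation-finiteness at this step, the argument does not go through. (As a side remark, representation-directedness also forces every indecomposable to be a brick, so $\dim_k\End_\cP(U)=1$ always holds here and $\det C = 1$; your caution about endomorphism rings larger than $k$ is unnecessary in this setting, though your workaround is valid.)

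A minor wording issue: the relation ``$\Hom_\cP(V,U)\neq 0$'' is not transitive, so it is not itself a preorder — one must pass to the transitive closure before asking whether it is a partial order, which is how the paper states it.
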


\begin{proof}
Denote by $\{U_1,\ldots, U_\ell\}=\Ind \cP$. For $1\leq i,j\leq \ell$ we have $\bm_{U_i}(U_j)=\delta_{i,j}$ and $\bdh_{U_i}(U_j)=\dim_k\Hom_{\cP}(U_i,U_j)$. Hence if we denote by $\Phi\in {\rm Mat}_\ell(\bZ)$ the matrix defined by $\Phi_{i,j}:=\dim_k\Hom_{\cP}(U_i,U_j)$, we have that $\Phi\circ \bm_\cP=\bdh_{\cP}^{\Ind \cP}$ showing that $\bm_\cP\geq \bdh_{\cP}^{\Ind \cP}$.

Since $\cP$ is representation-finite, it is 
 representation-directed
by \cite[Theorem~1.4]{simson}. This means that the transitive closure of the relation $U_i \leq U_j$ whenever $\Hom(U_i,U_j) \neq 0$ is a partial order on $\Ind \cP$.  This in turn implies that every indecomposable module admits only the trivial endomorphism, thus it is a brick.
 We conclude that $\Phi$ is an isomorphism using the following lemma with $Y= \Ind \cP$ and $\varphi_i(j)=\Phi_{i,j}.$
\end{proof}

\begin{lem}\label{lemma:basisPoset}
Let $Y$ be a finite poset. For each $y\in Y$ assume that we have $\varphi_y\in \bZ^{Y}$ satisfying 
\begin{itemize}
\item $\varphi_y(y)=1$
\item $\varphi_y(x)\neq 0 \Rightarrow y\geq x$ 
\end{itemize}
Then the family $(\varphi_y, y\in Y)$ is a basis of the abelian group $\bZ^Y$.
\end{lem}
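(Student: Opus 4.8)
The plan is to show the matrix expressing the family $(\varphi_y)_{y \in Y}$ in terms of the standard basis $(e_x)_{x \in Y}$ of $\bZ^Y$ is invertible over $\bZ$, i.e.\ has determinant $\pm 1$. The two hypotheses say precisely that, writing $\varphi_y = \sum_{x \in Y} \varphi_y(x) e_x$, the coefficient of $e_y$ is $1$, and the only nonzero coefficients $\varphi_y(x)$ occur for $x \le y$. So the transition matrix is, after a suitable ordering of $Y$, lower-triangular with $1$'s on the diagonal.

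\begin{proof}
Choose a linear extension of the partial order on $Y$, that is, a total order $\preceq$ on $Y$ refining $\le$; such a total order exists since $Y$ is finite. Enumerate $Y = \{y_1, \dots, y_n\}$ so that $y_i \preceq y_j$ whenever $i \le j$. Let $A \in \mathrm{Mat}_n(\bZ)$ be the matrix with $A_{i,j} := \varphi_{y_j}(y_i)$, so that $\varphi_{y_j} = \sum_{i=1}^n A_{i,j}\, e_{y_i}$ expresses each $\varphi_{y_j}$ in the standard basis $(e_{y_i})_i$ of $\bZ^Y$.

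By the first hypothesis, $A_{j,j} = \varphi_{y_j}(y_j) = 1$ for all $j$. By the second hypothesis, $A_{i,j} = \varphi_{y_j}(y_i) \neq 0$ forces $y_j \ge y_i$; since $\preceq$ refines $\le$, this gives $y_i \preceq y_j$ and hence $i \le j$. Therefore $A_{i,j} = 0$ whenever $i > j$, so $A$ is upper-triangular with all diagonal entries equal to $1$. Consequently $\det A = 1$, so $A$ is invertible over $\bZ$ and the columns of $A$ form a basis of $\bZ^n$. Transporting along the isomorphism $\bZ^n \xrightarrow{\sim} \bZ^Y$, $e_i \mapsto e_{y_i}$, we conclude that $(\varphi_y)_{y \in Y}$ is a basis of $\bZ^Y$.
\end{proof}

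The only point requiring care is that the partial order $\le$ on $Y$ must be extended to a total order \emph{before} indexing the matrix, so that the support condition $\varphi_y(x) \neq 0 \Rightarrow y \ge x$ actually yields triangularity with respect to the chosen enumeration; this is the step I expect to be the (mild) crux, though it is standard. Everything else is the routine observation that a unitriangular integer matrix lies in $\mathrm{GL}_n(\bZ)$.
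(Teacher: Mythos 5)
Your proof is correct. It takes a genuinely different route from the paper's: you choose a linear extension of the partial order, observe that the transition matrix from $(\varphi_y)$ to the standard basis is unitriangular in that ordering, and conclude via $\det A = 1$ that $A \in \mathrm{GL}_n(\bZ)$. The paper instead argues directly, without determinants or linear extensions: it proves generation by an induction on the size of a ``support'' set $\cS_\Psi = \{y \mid \exists\, y' \geq y \text{ with } \Psi(y') \neq 0\}$ (repeatedly peeling off a maximal element), and proves linear independence separately by taking a maximal element among the nonzero coefficients and deriving a contradiction. Your version is shorter and more conceptual, and it makes the unimodularity explicit in one stroke; the paper's version, while longer, exhibits the inversion as a concrete recursive procedure (useful if one wants to actually compute the coefficients expressing a given $\Psi$ in the new basis) and avoids the detour through choosing a total order. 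Both are standard and equally rigorous.
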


\begin{proof}
 Label the elements $\{y_1,\ldots, y_n\}$  of $Y$ in such a way that $y_i\leq y_j$ implies $i\leq j$. Consider the dual basis $(\delta_{y_i})_{i=1,\ldots, n}$ of $\bZ^Y$ defined by $\delta_{y_i}(y_j)=\delta_{i,j}$. Then the matrix expressing the $(\phi_{y_i})_{i=1,\ldots,n}$ in the basis $(\delta_{y_i})_{i=1,\ldots,n}$ is in ${\rm GL}_n(\bZ)$ since it is an upper triangular matrix with $1$ on the diagonal. Hence the family $(\varphi_{y_i})_{i=1,\ldots,n}$ is a $\bZ$-basis. 
\end{proof}

\section{Invariants defined by order-embeddings}\label{sec::order-embeddings}

\begin{defn}
Let $\cX$ and $\cP$ be finite posets. A map $f:\cX\to \cP$ is an \emph{order-embedding} if $x\leq x'$ precisely when $f(x)\leq f(x')$, for any $x,x'\in \cX$. Denote by $\Emb_\cX^\cP$ the set of all order-embeddings of $\cX$ into $\cP$ up to automorphism of $\cX$. 
\end{defn}

\begin{rem}
To avoid redundancy, we chose to consider the set $\Emb_\cX^\cP$  of order-embeddings up to automorphism of $\cX$. For instance, if the posets $\cX$ and $\cP$ are given by 

\[
  \scalebox{0.8}{
  \begin{tikzpicture}
\begin{scope}[xshift=5cm]  \node(x) at (-1,0) {$\cP=$};
\node (A) at (0,0) {$a$};
  \node (B) at (1,0.5) {$b$};
  \node (C) at (1,-0.5) {$c$};
\node (D) at (2,0) {$d$};

\draw[->] (A)--(B);
\draw[->] (B)--(D);

\draw[->] (A)--(C);
\draw[->] (C)--(D);
\end{scope}

\node(x) at (0,0) {$\cX=$};
\node (A) at (1,0) {$1$};
\node (B) at (2,0.5) {$2$};
\node (C) at (2,-0.5) {$3$};
\draw[->] (A)--(B);
\draw[->] (A)--(C);
 \end{tikzpicture}}\]
 then $|\Emb_\cX^\cP| = 1$, even if there exist two different order-embeddings of $\cX$ into $\cP$.
\end{rem}

Any $f\in \Emb_\cX^\cP$ induces a restriction functor $f^*:\mod \cP\to \mod \cX$ which is exact. It is also dense since order-embeddings are injective. Therefore $f^*$ induces a surjective $\bZ$-linear map $\Ksp(\cP)\to \Ksp(\cX)$.

\begin{defn}\label{def:embedding}
Let $\cX$ be a poset of finite representation type,  $\cP$ be a finite poset and,  $\rE$ be a subset of $\Emb_\cX^\cP$. We denote by $\rI:=\Ind\cX$. 

We define $\bm_{\cX,\cP}^{\rE}:\Ksp(\cP)\to \bZ^{\rE\times \rI}$ by 
$$ \left(\bm_{\cX,\cP}^{\rE}(M)\right)_{(f,U)}:={\rm mult}_{\cX}(U,f^*M)\quad \textrm{for } (f,U)\in \rE\times \rI.$$  

We define $\bdh^E_{\cX,\cP}:\Ksp(\cP)\to \bZ^{E\times I}$ by 
$$\left(\bdh_{\cX,\cP}^E(M)\right)_{(f,U)}:=\dim \Hom_{\cX}(U,f^*M) \quad \textrm{for } (f,U)\in \rE\times \rI.$$
When $\rE=\Emb_{\cX}^\cP$ we write $\bm_{\cX,\cP}$ and $\bdh_{\cX,\cP}$ instead of $\bm_{\cX,\cP}^\rE$ and $\bdh_{\cX,\cP}^\rE$.
\end{defn}

\begin{rem}
    See Remark~\ref{rem:dimhom} for a discussion of how to realize the invariant $\bdh_{\cX,\cP}^\rE$ as a dim-Hom invariant (as defined in Example~\ref{example:invariants}(4)).
\end{rem}

We show in Theorem~\ref{thm::mult_dimhom_invariants} that the invariants $\bm_{\cX,\cP}^\rE$ and $\bdh_{\cX,\cP}^{\rE}$ are equivalent. The aim of this paper is to study these (equivalent) invariants. We first review a pair of examples which have appeared recently in the literature.

\begin{ex}\label{ex:restrict}
    \begin{enumerate}
        \item Consider the poset
        \[
  \scalebox{0.8}{
  \begin{tikzpicture}
\node(x) at (-1,0) {$\cX=$};
\node (A) at (0,0) {$a$};
  \node (B) at (1,0.5) {$b$};
  \node (C) at (1,-0.5) {$c$};
\node (D) at (2,0) {$d$};

\draw[->] (A)--(B);
\draw[->] (B)--(D);

\draw[->] (A)--(C);
\draw[->] (C)--(D);
\end{tikzpicture}}
    \]
        This poset has 10 indecomposable modules, all of which are interval modules. Of these, there are two which are not rectangle modules, namely $\mathbb{I}_{\{a,b,c\}}$ and $\mathbb{I}_{\{b,c,d\}}$.
        For $\cP$ a product of two totally ordered sets, it is shown in \cite{BLO} that a $\cP$-module $M$ is rectangle decomposable if and only if it satisfies
        $$\left(\bm_{\cX,\cP}(M)\right)_{(f,U)} = 0 \text{ for all $(f,U) \in \Emb_\cX^\cP \times \{\mathbb{I}_{\{a,b,c\}},\mathbb{I}_{\{b,c,d\}}\}$}.$$
        Note that the authors of \cite{BLO} do not assume $\cP$ to be finite.
        \item The ``generalized rank invariant'' of \cite{KM} and the ``compressed multiplicities'' of \cite{AENY2} both record the multiplicity of the sincere interval modules $\mathbb{I}_{\cX}$ over certain subsets $\cX \subseteq \cP$. (The authors of \cite{AENY2} assume that $\cP$ is a product of two finite totally ordered sets and the authors of \cite{KM} work over an arbitrary locally finite poset.) See \cite[Section~3.2]{BBH} for a summary of these and related results. In Section~\ref{sec:families}, we will similarly examine situations where we allow the poset $\cX$ to vary and record only the multiplicity of the sincere interval modules.
    \end{enumerate}
\end{ex}

\begin{thm}\label{thm::mult_dimhom_invariants}
Let $\cX$ be a representation-finite poset and $\cP$ be a finite poset. Then for any $\rE\subseteq \Emb_{\cX}^{\cP}$ the invariants $\bm^{\rE}_{\cX,\cP}$ and $\bdh^{\rE}_{\cX,\cP}$ are equivalent. 
\end{thm}

\begin{proof}
By Proposition \ref{prop::mult-dimhom}, there is an invertible map $\Phi:\bZ^{\rI}\to \bZ^{\rI}$ such that $\Phi\circ \bm_{\cX}=\bdh_{\cX}^{\rI}$.  We then obtain the following commutative diagram 
$$\xymatrix{\Ksp(\cP)\ar[rr]^{\bm_{\cX,\cP}^{\rE}}\ar@{=}[d] && \bZ^{\rE\times I}\ar[d]^{\Phi^{\rE}}\\ \Ksp(\cP)\ar[rr]^{\bdh_{\cX,\cP}^{\rE}} && \bZ^{\rE\times \rI} }$$
where $\Phi^{\rE}$ is 
 the isomorphism sending $x\in \bZ^{\rE\times \rI}$ to the map $\Phi^\rE(x)\in \bZ^{\rE\times \rI}$ which is defined by applying, for a fixed $f\in \rE$, the isomorphism $\Phi$ to the map $x_{(f,-)} \in \bZ^{\rI}$; in formulas:
$$\Phi^\rE(x)_{(f,M)}:=\Phi (x_{(f,-)})_M \quad \textrm{for } f\in \rE,\ M\in \rI,\ x\in \bZ^{\rE\times \rI} .$$
\end{proof}

\begin{ex}\label{Ex::mult2}
\begin{enumerate}
\item 
Let $\cX_1$ be the poset with one element. Then $\Emb_{\cX_1}^\cP$ is naturally in bijection with $\cP$, and $\rI=\Ind\cX$ has one element. For $a\in \cP$, one immediately checks that $\bm_{\cX_1,\cP}=\bdim_\cP=\bdh_{\cX_1,\cP}$.

\item Let $\cX_2$ be the poset $\{1\to 2\}$. It is well-known that the poset $\cX_2$ is of representation-finite type, with 
$\rI =\Ind \cX_2=\{P_2 = \bI_{\{2\}},
P_1 = \bI_{\{1,2\}},H_{1,2} = \bI_{\{1\}}\}$.

Let $f:\cX_2\to \cP$ be an order-embedding. Then one easily checks that for any $M\in \mod \cP$
$$\bm_{\cX_2,\cP}(M)_{(f,U)}=\left\{\begin{array}{ll} \dim M_{f2}- \rk M_{f1\to f2 } & \textrm{ if } U=\bI_{\{2\}} \\ 
\rk M_{f1\to f2 } & \textrm{ if } U= \bI_{\{1,2\}}\\ 
\dim M_{f1}- \rk M_{f1\to f2 } & \textrm{ if } U= \bI_{\{1\}}
\end{array}\right.\qquad (3.7.1)$$

And we have 

$$\bdh_{\cX_2,\cP}(M)_{(f,U)}=\left\{\begin{array}{ll} 
\dim M_{f2} & \textrm{ if } U=\bI_{\{2\}} \\ 
\dim M_{f1} & \textrm{ if } U=\bI_{\{1,2\}}\\ 
\dim M_{f1}-\rk M_{f1\to f2} & \textrm{ if } U= \bI_{\{1\}}
\end{array}\right.\qquad (3.7.2)$$
The isomorphism $\Phi$ which yields the equivalence between $\bm_{\cX_2,\cP}^f$ and $\bdh_{\cX_2,\cP}^f$
is triangular with diagonal entries $1$ as constructed in  Proposition \ref{prop::mult-dimhom}, with  $U_1= \bI_{\{2\}},
U_2 = \bI_{\{1,2\}},U_3 = \bI_{\{1\}}$\; :
$$\Phi = \begin{pmatrix} 1 & 0 & 0\\ 1 & 1 & 0\\ 0 & 1 & 1\end{pmatrix}. $$

\end{enumerate}
\end{ex}

From part (2) of this example we can also deduce that multiplicities can be calculated from ranks: Equation (3.7.1) shows that composing the vector of ranks with the matrix $$\Phi_f=\begin{pmatrix} 1 & -1 & 0\\ 0 & \;\;\; 1 & 0\\ 0 & -1 & 1\end{pmatrix}$$  yields the vector of multiplicities.

We formalize this observation in  the following corollary:

\begin{coro}\label{corX_2Rank}
Let $\cP$ be a finite connected poset with at least 2 elements. Then the invariants 
$\brk_\cP$ and $\bm_{\cX_2,\cP}$ are equivalent.
\end{coro}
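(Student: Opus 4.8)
The plan is to show that each of $\brk_\cP$ and $\bm_{\cX_2,\cP}$ is finer than the other, which combined with both being finite invariants will give equivalence. The explicit formulas in Example~\ref{Ex::mult2}(2) make both directions essentially a matter of bookkeeping, so the proof will be short.

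First I would recall that $\Emb_{\cX_2}^\cP$ is in natural bijection with the set of ordered pairs $\{<_\cP\} := \{(a,b) \in \cP^2 \mid a < b\}$, since an order-embedding $f : \cX_2 \to \cP$ is determined by the pair $(f(1), f(2))$ with $f(1) < f(2)$, and $\cX_2$ has no nontrivial automorphisms. So $\bm_{\cX_2,\cP}$ takes values in $\bZ^{\{<_\cP\} \times \rI}$ with $\rI = \{\bI_{\{1\}}, \bI_{\{1,2\}}, \bI_{\{2\}}\}$. To see $\bm_{\cX_2,\cP} \geq \brk_\cP$: by Example~\ref{Ex::mult2}(2), for an embedding $f$ corresponding to $(a,b)$ we have $\bm_{\cX_2,\cP}(M)_{(f, \bI_{\{1,2\}})} = \rk M_{a \to b} = \brk_{a,b}(M)$. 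Thus the coordinate projection $\bZ^{\{<_\cP\}\times \rI} \to \bZ^{\{<_\cP\}}$ onto the $\bI_{\{1,2\}}$-components, followed by the identity, recovers the restriction of $\brk_\cP$ to strict pairs; the diagonal pairs $(a,a)$ of $\brk_\cP$ contribute $\brk_{a,a}(M) = \dim M_a$, which by Example~\ref{Ex::mult2}(1)-style reasoning (or directly, since $\dim M_a = \bm_{\cX_2,\cP}(M)_{(f,\bI_{\{1\}})} + \bm_{\cX_2,\cP}(M)_{(f,\bI_{\{1,2\}})}$ for any $f$ with $f(1) = a$, which exists because $\cP$ is connected with at least two elements and hence has some element comparable to $a$) is also a $\bZ$-linear function of $\bm_{\cX_2,\cP}(M)$. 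Assembling these gives a $\bZ$-linear $\varphi$ with $\varphi \circ \bm_{\cX_2,\cP} = \brk_\cP$.

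For the reverse direction $\brk_\cP \geq \bm_{\cX_2,\cP}$, I would use again the formulas in Example~\ref{Ex::mult2}(2): for $f$ corresponding to $(a,b)$,
\[
\bm_{\cX_2,\cP}(M)_{(f,\bI_{\{1\}})} = \dim M_a - \rk M_{a\to b} = \brk_{a,a}(M) - \brk_{a,b}(M),
\]
\[
\bm_{\cX_2,\cP}(M)_{(f,\bI_{\{1,2\}})} = \rk M_{a\to b} = \brk_{a,b}(M),
\]
\[
\bm_{\cX_2,\cP}(M)_{(f,\bI_{\{2\}})} = \dim M_b - \rk M_{a \to b} = \brk_{b,b}(M) - \brk_{a,b}(M),
\]
each of which is visibly a $\bZ$-linear function of $\brk_\cP(M)$. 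This defines a $\bZ$-linear map $\psi : \Im \brk_\cP \to \Im \bm_{\cX_2,\cP}$ with $\psi \circ \brk_\cP = \bm_{\cX_2,\cP}$ (one checks $\psi$ is well-defined on $\Im \brk_\cP$ since it is given by an explicit matrix applied to $\brk_\cP(M)$). Since each of $\brk_\cP$ and $\bm_{\cX_2,\cP}$ is finer than the other, they are equivalent: $\varphi$ and $\psi$ restrict to mutually inverse isomorphisms between $\Im \brk_\cP$ and $\Im \bm_{\cX_2,\cP}$ intertwining the two invariants.

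The only genuinely non-formal point — and the one I would be most careful about — is handling the diagonal rank coordinates $\brk_{a,a} = \bdim_a$ versus the indexing of $\bm_{\cX_2,\cP}$ purely by strict pairs $(a,b)$: I must confirm that for every $a \in \cP$ there is at least one embedding $f \in \Emb_{\cX_2}^\cP$ with $a \in \{f(1), f(2)\}$, so that $\dim M_a$ is expressible through $\bm_{\cX_2,\cP}(M)$. This is exactly where the hypotheses ``connected'' and ``at least two elements'' are used: in a connected poset with $|\cP| \geq 2$, every element is comparable to some other element, so such an $f$ exists. (If instead $a$ were isolated, $\dim M_a$ would be invisible to $\bm_{\cX_2,\cP}$ and the two invariants would differ, which is why the hypothesis cannot be dropped.) With this observation in place, the rest is the routine linear algebra sketched above.
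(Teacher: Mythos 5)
Your proof is correct, and it takes a genuinely different route from the paper's. The paper constructs a single explicit map $\Phi \circ p^* : \bZ^{\{\leq_\cP\}} \to \bZ^{\rE \times \rI}$ (a pullback along a surjective indexing map $p$ composed with a fiberwise invertible matrix $\Phi$), verifies that $\Phi \circ p^* \circ \brk_\cP = \bm_{\cX_2,\cP}$, and then concludes by noting that $\Phi \circ p^*$ is injective and that $\brk_\cP$ is surjective onto $\bZ^{\{\leq_\cP\}}$; together these force $\Phi \circ p^*$ to restrict to an isomorphism $\Im\brk_\cP \to \Im\bm_{\cX_2,\cP}$. You instead exhibit factorizations in \emph{both} directions and invoke the general principle that mutual ``finer than'' implies equivalence, since the two intertwining maps must compose to the identity on each image. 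Your approach is a bit longer (two verifications rather than one) but avoids appealing to the surjectivity of the rank invariant, which the paper treats as known from \cite{BOO,BBH} and does not re-establish in Section~3; in that sense your argument is more self-contained. Both proofs hinge on the same explicit formulas from Example~\ref{Ex::mult2}(2), and both correctly identify that connectedness with $|\cP|\geq 2$ is exactly what lets the diagonal ranks $\brk_{a,a} = \dim M_a$ be extracted from $\bm_{\cX_2,\cP}$. One small imprecision in your write-up: the formula $\dim M_a = \bm(M)_{(f,\bI_{\{1\}})} + \bm(M)_{(f,\bI_{\{1,2\}})}$ requires an $f$ with $f(1) = a$, which exists only if $a$ has an element strictly above it; if $a$ is maximal you must instead use $f$ with $f(2) = a$ and the coordinates for $\bI_{\{2\}}$ and $\bI_{\{1,2\}}$. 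You acknowledge this distinction at the end (``$a \in \{f(1),f(2)\}$''), but the earlier parenthetical overstates what connectedness alone gives.
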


\begin{proof}

 Let $\rE = \Emb_{\cX_2}^\cP$ and consider the map $p:\rE\times \rI\to \{\leq_\cP\}$ defined by 
$$ p(f,U) = \left\{ \begin{array}{ll}  (f2,f2) & \textrm{if }U=\bI_{\{2\}} \\
 (f1,f2) & \textrm{if } U=\bI_{\{1,2\}}\\ 
 (f1,f1) & \textrm{if } U = \bI_{\{1\}}
\end{array}\right. $$
Since the poset $\cP$ is connected and has at least two elements, this map is surjective and induces an injective morphism $ p^*: \;\bZ^{\{\leq_{\cP}\}}\to \bZ^{\rE\times \rI}$. Let $\Phi: \bZ^{\rE\times \rI}\to \bZ^{\rE\times \rI}$ defined by the matrix 
$\Phi_f$ above
for any $f\in \rE$ in the basis of $\bZ^{\rE\times \rI}$ induced by the basis $(\bI_{\{2\}}, \bI_{\{1,2\}},\bI_{\{1\}})$ of $\bZ^I$.

Then we have $\Phi\circ p^* \circ \brk_{\cP}= \bm_{\cX_2,\cP}$. Moreover since $\brk_{\cP}$ is surjective, and since $\Phi\circ p^*$ is injective, we obtain that $\Phi\circ p^*$ is an isomorphism from $\bZ^{\{\leq_\cP\}}$ to the image of $\bm_{\cX_2,\cP}$.
\end{proof}

\begin{rem}\label{rem::multX2_redundant}
Since $\cX_2$ has three indecomposables, the set $\rE \times I= \Emb_{\cX_2}^\cP\times \Ind \cX_2 $ has three times the cardinality  of  $\{<_\cP\}$, where $\{<_\cP\}:=\{(a,b)\in \cP^2| a<b\}$. This is in general strictly greater than $|\{\leq_\cP\}|=|\{<_\cP\}|+|\cP|$, so the invariant $\bm_{\cX_2,\cP}$ is not surjective. We will see in Section~\ref{sec:families} a way to reduce this redundant information.
\end{rem}

\begin{prop} Let $\cP$ be a connected poset  such that any two comparable elements lie in a chain of length $3$. Let $\cX_3$ be the poset $\{1\to 2\to 3\}$. Then the invariants $\brk_{\cP}$ and $\bm_{\cX_3,\cP}$ are equivalent.
\end{prop}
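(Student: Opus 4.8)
The plan is to mimic the proof of Corollary~\ref{corX_2Rank}, establishing the equivalence by showing that each invariant is finer than the other. Since $\cX_3 = \{1 \to 2 \to 3\}$ is a type $A_3$ quiver, it is representation-finite, and $\Ind\cX_3$ consists of the six interval modules $\bI_{\{1\}}, \bI_{\{2\}}, \bI_{\{3\}}, \bI_{\{1,2\}}, \bI_{\{2,3\}}, \bI_{\{1,2,3\}}$. First I would record, for an order-embedding $f:\cX_3 \to \cP$ with image $\{fa < fb < fc\}$ (writing $a=f1$, $b=f2$, $c=f3$), the multiplicity $\bm_{\cX_3,\cP}(M)_{(f,U)}$ for each indecomposable $U$ in terms of the three ranks $\rk M_{a\to b}$, $\rk M_{b\to c}$, $\rk M_{a\to c}$ together with the dimensions $\dim M_a, \dim M_b, \dim M_c$. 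These formulas follow from the classification of $A_3$-representations: e.g. the multiplicity of $\bI_{\{1,2,3\}}$ is $\rk M_{a\to c}$, and the remaining multiplicities are obtained by inclusion-exclusion among the ranks and dimensions (for instance $\mathrm{mult}(\bI_{\{1,2\}}) = \rk M_{a\to b} - \rk M_{a\to c}$, and so on). Equivalently one can invoke Proposition~\ref{prop::mult_dimhom_invariants} and work with $\bdh_{\cX_3,\cP}$, whose entries are directly $\dim\Hom_{\cX_3}(U, f^*M)$; the dim-Hom entries are manifestly $\bZ$-linear combinations of the $\dim M_x$ and $\rk M_{x\to y}$ restricted to the image of $f$. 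Either way, this shows $\brk_\cP \geq \bm_{\cX_3,\cP}$: there is a $\bZ$-linear map expressing $\bm_{\cX_3,\cP}$ through $\brk_\cP$ (note $\dim M_x = \rk M_{x \to x}$, so dimensions are recorded by $\brk_\cP$).

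For the reverse direction $\bm_{\cX_3,\cP} \geq \brk_\cP$, I would exhibit, for each pair $a \leq b$ in $\cP$, a way to recover $\rk M_{a\to b}$ from the $\bm_{\cX_3,\cP}$-data. For $a = b$ this is $\dim M_a$, recoverable from any embedding $f$ with $f1 = a$ (or $f2$ or $f3$) by summing the multiplicities of the indecomposables supported at that vertex. For $a < b$, the hypothesis that every element of $\cP$ lies in a chain of length $\geq 3$ is used: given $a < b$, I claim there is an order-embedding $f:\cX_3 \to \cP$ whose image contains both $a$ and $b$. Indeed, if the chain $a < b$ extends to a chain of three elements --- either $a < b < c$, or $c < a < b$, or $a < c < b$ for some $c$ --- we get such an $f$, and then $\rk M_{a\to b}$ is expressible as a $\bZ$-linear combination of the multiplicities $\bm_{\cX_3,\cP}(M)_{(f,U)}$ (summing the multiplicities of those $U$ whose support, transported by $f$, contains both $a$ and $b$). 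Establishing the existence of this chain of length $3$ through $\{a,b\}$ is the point where the connectedness and the ``chain of length $\geq 3$'' hypothesis enter: every element lies in some chain of length $3$, and one needs to argue that one can arrange such a chain to pass through a prescribed comparable pair $a<b$. I expect this combinatorial step to be the main obstacle --- it requires care because the length-$3$ chain guaranteed through $a$ might not contain $b$, and vice versa, so one must combine the two and extract a suitable $3$-element subchain containing both; a short case analysis on the relative position of the extra elements should suffice.

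Having shown both $\brk_\cP \geq \bm_{\cX_3,\cP}$ and $\bm_{\cX_3,\cP} \geq \brk_\cP$, the two invariants are equivalent, and since $\brk_\cP$ is surjective onto $\bZ^{\{\leq_\cP\}}$ the map realizing $\bm_{\cX_3,\cP} \geq \brk_\cP$ is automatically an isomorphism between $\Im\bm_{\cX_3,\cP}$ and $\bZ^{\{\leq_\cP\}}$, exactly as in the proof of Corollary~\ref{corX_2Rank}. One convenient packaging: fix once and for all, for each comparable pair $(a,b) \in \{\leq_\cP\}$, a choice of embedding $f_{a,b}$ with $a,b \in \Im f_{a,b}$ (possible by the previous paragraph), and use these to define a section-like map $\bZ^{\{\leq_\cP\}} \to \bZ^{\rE \times \rI}$ followed by the linear map recovering ranks; composing with the map of the first paragraph in the other order gives the identity on $\bZ^{\{\leq_\cP\}}$, which simultaneously proves surjectivity of the recovery map and identifies the image.
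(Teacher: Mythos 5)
Your proposal mirrors the paper's own argument: the paper writes out explicit linear formulas for the six multiplicities $\bm_{\cX_3,\cP}(M)_{(f,U)}$ in terms of $\dim M_{fi}$ and $\rk M_{fi\to fj}$ (giving $\brk_\cP \geq \bm_{\cX_3,\cP}$), and for the converse asserts that for each $a<b$ there exists $c\in\cP$ with $c<a$ or $b<c$, whence an embedding of $\cX_3$ whose image contains $\{a,b\}$. You correctly identify this last existence claim as the crux. However, your optimism that ``a short case analysis on the relative position of the extra elements should suffice'' is misplaced: the hypothesis that every \emph{element} lies in a chain of length $\geq 3$ does not imply that every comparable \emph{pair} $\{a,b\}$ lies in such a chain, and without that the step --- and in fact the proposition as literally stated --- fails.

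Concretely, take $\cP = \{a,b,x,y,z,w\}$ with order generated by $a<x<y$, $z<w<b$, and $a<b$ (no further relations). This poset is connected, and each of its six elements lies in one of the two $3$-chains $a<x<y$ or $z<w<b$, so the stated hypothesis holds. But there is no $c$ with $c<a$, no $c$ with $a<c<b$, and no $c$ with $b<c$, so no order-embedding of $\cX_3$ into $\cP$ has $\{a,b\}$ in its image; the only $3$-chains are $\{a,x,y\}$ and $\{z,w,b\}$. The modules $\mathbb{I}_{\{a,b\}}$ and $\mathbb{I}_{\{a\}}\oplus\mathbb{I}_{\{b\}}$ therefore restrict identically along every embedding of $\cX_3$ (both restrict to $\mathbb{I}_{\{a\}}$ on $\{a,x,y\}$ and to $\mathbb{I}_{\{b\}}$ on $\{z,w,b\}$), hence have the same $\bm_{\cX_3,\cP}$, yet their rank invariants differ at the pair $(a,b)$; so $\bm_{\cX_3,\cP} \ngeq \brk_\cP$ here. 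What is actually needed --- and what the paper's assertion ``there exists $c$ with $c<a$ or $b<c$'' tacitly assumes --- is the stronger condition that every comparable pair $a<b$ extends to a chain of $\geq 3$ elements (equivalently, for a connected poset with at least two elements, that every maximal chain has length $\geq 3$). Under that hypothesis your outline is sound and coincides with the paper's; under the hypothesis as literally stated, the combinatorial step you flagged is a genuine gap that no case analysis will close.
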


\begin{proof}
 There are $6$ indecomposable objects in $\mod \cX_3$ which we label by 
$$ \rI= \left\{  H_{1,2} = \bI_{\{1\}}, H_{2,3} = \bI_{\{2\}}, P_3 = \bI_{\{3\}}, H_{1,3} = \bI_{\{1,2\}}, P_2 = \bI_{\{2,3\}}, P_3 = \bI_{\{1,2,3\}}\right\}.$$

One computes easily that

\[\left(\bm_{\cX_3,\cP}(M)\right)_{(f,U)} = \left\{\begin{array}{ll} \dim M_{f1}-\rk M_{f1\to f2} & \textrm{ if }U=\bI_{\{1\}}\\
\dim M_{f2}-\rk M_{f1\to f2}-\rk M_{f2\to f3}+\rk M_{f1\to f3} & \textrm{ if }U=\bI_{\{2\}}\\
\dim M_{f3}-\rk M_{f2\to f3}& \textrm{ if }U=\bI_{\{3\}}\\
\rk M_{f1\to f2} -\rk M_{f1\to f3}& \textrm{ if }U=\bI_{\{1,2\}}\\
\rk M_{f2\to f3} -\rk M_{f1\to f3}& \textrm{ if }U=\bI_{\{2,3\}}\\
\rk M_{f1\to f3}& \textrm{ if }U=\bI_{\{1,2,3\}}
\end{array}\right.\]

For comparable elements $a<b$ in $\cP$, by hypothesis there exists $c\in \cP$ with $c<a$, with $b<c$ or with $a<c<b$. Therefore, there exists $f\in \Emb_{\cX_3}^\cP$  with $a$ and $b$ in its image. Using this embedding $f$, one can easily check the equivalence.  
\end{proof}

\begin{rem}\label{rem:relationsKernel} Note that here again, in general, the cardinality of $\Emb_{\cX_3}^\cP\times \rI$ is strictly greater than the cardinality of $\{\leq_{\cP}\}$. Therefore the invariant $\bm_{\cX_3,\cP}$ is not surjective. For example, if $\cP$ is as follows 
\[
  \scalebox{0.8}{
  \begin{tikzpicture}

\begin{scope}[xshift=9cm]
\node at (-1,0) {$\cP=$};
  \node (A) at (0,0) {$a$};
  \node (B) at (1,0) {$b$};
  \node (C) at (2,0.5) {$c$};
  
\node (D) at (2,-0.5) {$d$};

\draw[->] (A)--(B);
\draw[->] (B)--(C);
\draw[->] (B)--(D);
\end{scope}
 \end{tikzpicture}}\] 
one has $|\Emb_{\cX_3}^\cP|=2$, so $|\Emb_{\cX_3}^\cP\times \rI|=12$. However the rank of the invariant is $9=|\{\leq_{\cP}\}|$.  If we denote by $f_c$ ( resp.$f_d$) the element in $\rE$ with $f_c(3)=c$ (resp. $f_d(3)=d$), and $\bm = \bm_{\cX_3,\cP}$, then one has 
$$\bm (\bI_{\{b,c,d\}})_{(f,U)}= \left\{\begin{array}{ll} 1 & \textrm{ for  } U=\bI_{\{2,3\}}\\ 0 & \textrm{else}
\end{array}\right. $$

$$\bm (\bI_{\{b,c\}})_{(f,U)}= \left\{\begin{array}{ll} 1 & \textrm{ for  $(f,U)=(f_c,\bI_{\{2,3\}})$ or $(f_d, \bI_{\{2\}})$} \\ 0 & \textrm{else}
\end{array}\right. $$
 and 
$$\bm (\bI_{\{b\}})_{(f,U)}= \left\{\begin{array}{ll} 1 & \textrm{ for  $U=\bI_{\{2\}}$} \\ 0 & \textrm{else}
\end{array}\right. $$

Hence we observe the following relation 
$$\bm (\bI_{\{b,c,d\}})-\bm (\bI_{\{b,c\}})-\bm (\bI_{\{b,d\}})+\bm (\bI_{\{b\}})=0.$$

Similar computations give $$\bm (\bI_{\{a,b,c,d\}})-\bm (\bI_{\{a,b,c\}})-\bm (\bI_{\{a,b,d\}})+\bm (\bI_{\{a,b\}})=0;$$
$$\bm (M)-\bm (\bI_{\{a,b\}})-\bm (\bI_{\{b,c\}})-\bm (\bI_{\{b,d\}})+\bm (\bI_{\{b\}})=0,$$
where $M$ denotes the indecomposable $\cP$-module with dimension vector $(1,2,1,1)$. One further may check that the family in $K_0^{\rm sp}(\cP)$
$$([\bI_{\{b,c,d\}}\oplus\bI_{\{b\}}]-[\bI_{\{b,c\}}\oplus\bI_{\{c,d\}}], \ [\bI_{\{a,b,c,d\}}\oplus\bI_{\{a,b\}}]-[\bI_{\{a,b,c\}}\oplus\bI_{\{a,b,d\}}],\  [M\oplus\bI_{\{b\}}]-[\bI_{\{a,b\}}\oplus\bI_{\{b,c\}}\oplus\bI_{\{b,d\}}])$$ forms a basis of the kernel of $\bm: K_0^{\rm sp}(\cP)\to \bZ^{\rE\times \rI}$.
\end{rem}

\begin{prop}\label{prop:X_3Rank}
 Let $\cX'_3$ and $\cX''_3$ be the following posets 
\[
  \scalebox{0.8}{
  \begin{tikzpicture}
  
\begin{scope}[xshift=4cm]

\node(x) at (0,0) {$\cX'_3=$};
\node (A) at (1,0) {$1$};
\node (B) at (2,0.5) {$2$};
\node (C) at (2,-0.5) {$3$};
\draw[->] (A)--(B);
\draw[->] (A)--(C);
\end{scope}

\begin{scope}[xshift=8cm]
\node(x) at (0,0) {$\cX''_3=$};
\node (A) at (1,0.5) {$1$};
\node (B) at (1,-0.5) {$2$};
\node (C) at (2,0) {$3$};
\draw[->] (A)--(C);
\draw[->] (B)--(C);
\end{scope}
 \end{tikzpicture}}\]
Let $\cP$ be a connected poset such that any pair $a<b\in \cP$ is in the image of an order-embedding $\cX'_3\to \cP$ (resp. of an order embedding $\cX''_3\to \cP$), then the invariant
$\bm_{\cX'_3,\cP}$ (resp. $\bm_{\cX''_3,\cP}$) is strictly finer than the rank invariant $\brk_\cP$. 

\end{prop}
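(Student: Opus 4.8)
The plan is to exhibit, for each of the two posets $\cX = \cX'_3$ and $\cX = \cX''_3$, a $\bZ$-linear surjection $\varphi : \Im \bm_{\cX,\cP} \to \Im \brk_\cP$ satisfying $\varphi \circ \bm_{\cX,\cP} = \brk_\cP$ (proving $\bm_{\cX,\cP} \geq \brk_\cP$), and then to produce a single module witnessing that the inequality is strict. For the first part I would first compute the six indecomposables of $\mod \cX'_3$ (the interval modules on $\{1\},\{2\},\{3\},\{1,2\},\{1,3\},\{1,2,3\}$, the last being the sincere one) and the corresponding formulas for $\bigl(\bm_{\cX'_3,\cP}(M)\bigr)_{(f,U)}$ in terms of $\dim M_{fi}$ and the ranks $\rk M_{f1 \to f2}$, $\rk M_{f1 \to f3}$, together with the dimension $\dim \Im(M_{f1\to f2}) \cap \Im(M_{f1 \to f3})$ inside $M_{f2}$... wait, more precisely the dimension of $M_{f1}$ modulo the pullback — in any case, the key observation is that each such formula is a $\bZ$-linear expression in the entries $\brk_{a,b}(M)$ for $(a,b)$ in the image of $f$, \emph{plus} possibly one genuinely new quantity (an intersection dimension) coming from the fact that $\cX'_3$ is not of type $A$. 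The crucial point for $\bm_{\cX'_3,\cP} \geq \brk_\cP$ is only that, conversely, each $\brk_{a,b}(M)$ with $a < b$ can be recovered: by hypothesis there is $f \in \Emb_{\cX'_3}^\cP$ with $\{a,b\} \subseteq \Im f$, and inspecting the formulas one reads off $\rk M_{a \to b}$ as an explicit $\bZ$-combination of the multiplicities $\bm_{\cX'_3,\cP}(M)_{(f,\,\cdot\,)}$ — for instance $\rk M_{f1\to f2} = \bm(\bI_{\{1,2\}}) + \bm(\bI_{\{1,2,3\}})$ when $\{a,b\}=\{f1,f2\}$, and similarly via $f3$ when $\{a,b\}=\{f1,f3\}$. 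The diagonal entries $\brk_{a,a}(M) = \dim M_a$ are recovered exactly as in Example~\ref{Ex::mult2}(1) using the one-point embeddings obtained from $\cX'_3$ (restrict further), or directly from the summands supported at $f(i)$ alone. Assembling these recovery formulas over all $f$ defines the required $\varphi$; that $\varphi \circ \bm_{\cX'_3,\cP} = \brk_\cP$ is then the collection of identities just verified, and $\varphi$ is well-defined on $\Im \bm_{\cX'_3,\cP}$ because it is given by an honest $\bZ$-linear map on the ambient $\bZ^{\rE \times \rI}$. The argument for $\cX''_3$ is dual (replace $f_!$/hooks-type reasoning by $f^*$/cohooks, i.e. work with $\rk$ of maps \emph{into} $M_{f3}$), and Corollary~\ref{coro::basis} cited in the introduction already signals that the same bookkeeping goes through.

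For strictness I would give an explicit small $\cP$ and a $\cP$-module $M$ which is \emph{not} determined up to the rank invariant by — no: the cleaner route is to produce two modules $M, M'$ with $\brk_\cP(M) = \brk_\cP(M')$ but $\bm_{\cX'_3,\cP}(M) \neq \bm_{\cX'_3,\cP}(M')$. The natural candidate is $\cP = \cX'_3$ itself (or a poset containing it as the image of an embedding), with $M = \bI_{\{1,2\}} \oplus \bI_{\{1,3\}}$ versus $M' = \bI_{\{1,2,3\}} \oplus \bI_{\{1\}}$: both have $\dim M_1 = 2$, $\dim M_2 = \dim M_3 = 1$, and $\rk M_{1\to 2} = \rk M_{1\to 3} = 1$ in each case, so all rank-invariant entries agree, yet the multiplicity of $\bI_{\{1,2,3\}}$ is $0$ in $M$ and $1$ in $M'$ — the difference being exactly whether $\Im M_{1\to 2}$ and $\Im M_{1 \to 3}$ are "pulled back" from a common one-dimensional quotient of $M_1$, which the rank invariant cannot see. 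One then only needs $\cP$ to contain $\{a<b, a<c\}$ as an embedded copy of $\cX'_3$ and pads $M, M'$ by zeros elsewhere, so the hypothesis of the proposition is met; the dual example $\bI_{\{1,3\}} \oplus \bI_{\{2,3\}}$ versus $\bI_{\{1,2,3\}} \oplus \bI_{\{3\}}$ handles $\cX''_3$.

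The main obstacle I anticipate is purely the bookkeeping in the first part: one must be careful that the "new" intersection/quotient dimension appearing in the formula for $\bm_{\cX'_3,\cP}(M)_{(f,\bI_{\{2\}})}$ (or $\bI_{\{3\}}$, $\bI_{\{1\}}$) does not obstruct solving for the rank entries — it does not, because the rank entries appear with the sincere-module multiplicity in a unimodular triangular pattern, but writing this cleanly for \emph{both} posets and for \emph{all} relevant $f$ simultaneously, while keeping the map $\varphi$ manifestly $\bZ$-linear and independent of $M$, requires some care. A secondary point is to make sure the chosen witness module and ambient $\cP$ genuinely satisfy "any pair $a<b$ is in the image of an order-embedding $\cX'_3 \to \cP$" — taking $\cP$ to be a single embedded copy of $\cX'_3$ (possibly with a few extra comparabilities added to make every pair coverable, or simply noting $\cX'_3$ has only the pairs $(1,2),(1,3)$ which are both covered by the identity embedding) settles this with essentially no work.
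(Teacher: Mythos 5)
Your proposal is correct and follows essentially the same route as the paper for the dominance direction: compute the six multiplicities $\bm_{\cX,\cP}(M)_{(f,U)}$ in terms of dimensions and ranks of the structure maps of $f^*M$, then observe that for each pair $a<b$ the hypothesis supplies an embedding $f$ with $\{a,b\}\subseteq\Im f$, from which $\rk M_{a\to b}$ is recovered as a $\bZ$-linear combination of the multiplicities along $f$ (your formula $\rk M_{f1\to f2} = \bm_{\cX'_3,\cP}(M)_{(f,\bI_{\{1,2\}})} + \bm_{\cX'_3,\cP}(M)_{(f,\bI_{\{1,2,3\}})}$ is the right one), and the diagonal entries $\dim M_a$ are recovered as the sum of multiplicities of indecomposables supported at the relevant $f(i)$. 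This is exactly the assembly of $\varphi$ the paper performs.

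The one place you diverge is strictness. The paper argues by a rank comparison at $\cP=\cX'_3$: the image of $\brk_{\cX'_3}$ has rank $|\{\leq_{\cX'_3}\}|=5$, while $\bm_{\cX'_3}$ is injective on $\Ksp(\cX'_3)\cong\bZ^6$ so its image has rank $6$, and no surjection $\bZ^5\to\bZ^6$ compatible with the invariants can exist. You instead produce an explicit pair $M=\bI_{\{1,2\}}\oplus\bI_{\{1,3\}}$ and $M'=\bI_{\{1,2,3\}}\oplus\bI_{\{1\}}$ with $\brk(M)=\brk(M')$ but $\bm(M)\neq\bm(M')$, i.e.\ a nonzero element $[M]-[M']$ of $\ker\brk_{\cP}$ that is not in $\ker\bm_{\cX'_3,\cP}$, which equally rules out $\brk_\cP\geq\bm_{\cX'_3,\cP}$. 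The two arguments prove the same thing; the rank count is marginally shorter and does not require producing a witness, while your pair makes the failure concrete and dovetails nicely with the geometric intuition about what the rank invariant cannot see. Both are fine, and the dual pair you propose for $\cX''_3$ works for the same reason.
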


\begin{proof}

The indecomposable $\cX'_3$-modules are 
$$\Ind\cX'_3=\left\{S_1 = \mathbb I_{\{1\}},P_2 = \mathbb I_{\{2\}},P_3 = \mathbb I_{\{3\}},H_{1,3} = \mathbb I_{\{1,2\}}, H_{1,2} = \mathbb I_{\{1,3\}},P_1 = \mathbb I_{\{1,2,3\}}\right\}.$$

Now for $f \in \Emb_{\cX'_3}^\cP$, denote by $\rk M_{f1\to f2\oplus f3}$ the rank of the map %
$$\xymatrix{M_{f1}\ar[rr]^-{\begin{pmatrix} M_{f1\to f2}\\ M_{f1\to f3}\end{pmatrix}} && M_{f2}\oplus M_{f_3}}.$$

Then one computes that
\[\left(\bm_{\cX'_3,\cP}(M)\right)_{(f,U)} = \left\{\begin{array}{ll} \dim M_{f1}-\rk M_{f1\to f2\oplus f3}& \textrm{ if }U=\mathbb I_{\{1\}}\\
\dim M_{f2}-\rk M_{f1 \to f2} & \textrm{ if }U=\mathbb I_{\{2\}}\\
\dim M_{f3}-\rk M_{f1\to f3} & \textrm{ if }U=\mathbb I_{\{3\}}\\
\rk M_{f1\to f2\oplus f3}-\rk M_{f1\to f3}& \textrm{ if }U=\mathbb I_{\{1,2\}}\\
\rk M_{f1\to f2\oplus f3}-\rk M_{f1\to f2} & \textrm{ if }U=\mathbb I_{\{1,3\}}\\\rk M_{f1\to f2} +\rk M_{f1\to f3} -
\rk M_{f1\to f2\oplus f3}& \textrm{ if }U=\mathbb I_{\{1,2,3\}}
\end{array}\right.\]
\bigskip

Let $a<b$ be in $\cP$. Then by hypothesis, there exists an embedding $f:\cX'_3\to \cP$ with $f(1)=a$ and $f(2)=b$. This yields

$$\left(\brk_\cP(M)\right)_{(a,b)}= \left(\bm_{\cX'_3,\cP}(M)\right)_{\left(f, \bI_{\{1,2\}}\right)}+\left(\bm_{\cX'_3,\cP}(M)\right)_{\left(f,\bI_{\{1,2,3\}}\right)}.$$
For $a\in \cP$, by 
 connectedness, there exists $f\in \Emb_{\cX'_3}^\cP$ with either $f(1)=a$ or with $f(2)=a$. 

If $f(1)=a$, we check that 
 $\left(\brk_\cP(M)\right)_{(a,a)}$ is in the free abelian group generated by $$\left\{\delta_{(f, \bI_{\{1\}})}, \delta_{(f, \bI_{\{1,2\}})}, \delta_{(f, \bI_{\{1,3\}})}, \delta_{(f, \bI_{\{1,2,3\}})}\right\},$$ where $\left\{\delta_{(f,U)} \mid (f,U)\in \rE\times \rI\right\}$ denotes the canonical dual basis in $\bZ^{\rE\times \rI}$. 

If $f(2)=a$, one can also check that  $\left(\brk_\cP(M)\right)_{(a,a)}$ is in the free abelian group generated by $$\left\{\delta_{(f, \bI_{\{2\}})}, \delta_{(f, \bI_{\{1,2\}})},\delta_{(f, \bI_{\{1,2,3\}})}\right\}.$$ 
Therefore we have $\brk_\cP\leq \bm_{\cX'_3,\cP}$. 

 To prove that the invariant $\bm_{\cX'_3,\cP}$ is strictly finer than $\brk_{\cP}$ consider the case 
$\cP=\cX'_3$.  The rank of $\brk_{\cP}$ is the cardinal of $\{\leq_{\cP}\}$ which is $5$, while the rank of $\bm_{\cX'_3}$ is $6$ since there are $6$ indecomposable modules in $\mod \cX'_3$. 

The proof is completely similar for $\cX''_3$. 
\end{proof}

\begin{rem}
\begin{enumerate}
\item

Note that here again, the cardinality of $\Emb_{\cX'_3}^\cP\times \rI$ is strictly greater than the rank of the invariant $\bm:=\bm_{\cX'_3,\cP}$. For instance if $\cP$ is the poset 

 \[
  \scalebox{0.8}{
  \begin{tikzpicture}

\begin{scope}[xshift=9cm]
\node at (-1,0) {$\cP=$};
  \node (A) at (0,0) {$a$};
  \node (B) at (1,0) {$b$};
  \node (C) at (2,0.5) {$c$};
  
\node (D) at (2,-0.5) {$d$};

\draw[->] (A)--(B);
\draw[->] (B)--(C);
\draw[->] (B)--(D);
\end{scope}
 \end{tikzpicture}}\] 
then $|\Emb_{\cX'_3}^\cP|= 2$ so $|\Emb_{\cX'_3}^\cP\times \rI|=12$, but one can check that the rank of $\bm$ is $10$. 
 Denoting $f_a$ (resp. $f_b$) the embedding $\cX\to \cP$ with $f1=a$ (resp. $f1=b$), then one computes 
\[\bm (M)_{(f,U)}= \left\{\begin{array}{ll} 1 & \textrm{if $(f,U)=(f_a,\bI_{\{1,2,3\}})$, $(f_b,\bI_{\{1,2\}})$ or $(f_b,\bI_{\{1,3\}})$}\\ 0 & \textrm{else}\end{array}\right.\]
where $M$ is the indecomposable $\cP$-module with dimension vector $(1,2,1,1)$.

We therefore have the relations 
$$\bm(\bI_{\{a\}})+\bm (\bI_{\{ b\}})=\bm (\bI_{\{a,b\}}),$$

$$\bm(M)-\bm (\bI_{\{a,b,c,d\}})+\bm (\bI_{\{b,c,d\}})-\bm(\bI_{\{b,c\}})-\bm(\bI_{\{b,d\}})=0,$$
that define a basis of the kernel of $\bm:K_0^{\rm sp}(\cP)\to \bZ^{\rE\times \rI}$ as in Remark \ref{rem:relationsKernel}.

\item The hypothesis on $\cP$ in Proposition~\ref{prop:X_3Rank} implies the weaker property that $$\bigcup_{f \in \Emb_{\cX'_3}^\cP} \Im f = \cP.$$
We note that this weaker property is not sufficient in order to deduce the fact that $\bm_{\cX_3',\cP}$ is strictly finer than $\brk_\cP$. 
Indeed consider the case where $\cP$ is given as above. 
This poset satisfies the weaker property above, but there is no embedding $f:\cX'_3\to \cP$ with $f(1)=a$ and $f(2)=b$.
As mentioned above, the family $\left\{\bm(\bI_{\{a\}}), \bm(\bI_{\{b\}}), \bm (\bI_{\{a,b\}})\right\}$ is not free, but one easily checks that the family $\left\{\brk_\cP(\bI_{\{a\}}), \brk_\cP(\bI_{\{b\}}), \brk_\cP (\bI_{\{a,b\}})\right\}$ is free. Thus the invariant $\bm_{\cX_3',\cP}$ is not finer than $\brk_\cP$, and in fact the two invariants are incomparable
in this case. Similar examples occur when $\cX_3'$ is replaced with $\cX_3''$.
\end{enumerate}
\end{rem}

The condition on $\cP$ in Proposition \ref{prop:X_3Rank} may seem strong.  
For instance, it is not satisfied when $\cP$ is an $n\times m$ grid, $\cP  = \{1,2, \ldots , n\} \times \{1,2, \ldots , m\}$, since the ordered pairs between vertices of the form $(i,m)$ are not in the image of an order-embedding of $\cX_3'$. 
\begin{coro}

Let $\cP  = \{1,2, \ldots , n\} \times \{1,2, \ldots , m\}$ be an $n\times m$ grid with $n,m >1.$ 
Then there exists an embedding $\cP\to \cP'$ and an invariant
$\overline{\bm}_{\cX'_3,\cP',\cP}$ (resp. $\overline{\bm}_{\cX''_3,\cP',\cP}$) which is strictly finer than the rank invariant $\brk_\cP$. 
 
\end{coro}

\begin{proof}
We consider the case of $\cX'_3$.
Embed the poset $\cP$ into a poset $\cP'$ by adding a staircase to the top and to the left, as illustrated below by the example $\cP  = \{1,2,3\} \times \{1,2,3\}$
   
     \[
  \scalebox{0.8}{
  \begin{tikzpicture}

\begin{scope}[xshift=9cm]
\node at (-1,3) {$\cP'=$};
  \node (a1) at (1,1) {$(1,1)$};
  \node (a2) at (3,1) {$(2,1)$};
  \node (a3) at (5,1) {$(3,1)$};
    \node (a4) at (7,1) {$(4,1)$};
      \node (a5) at (9,1) {$(5,1)$};
 \node (b1) at (1,2) {$(1,2)$};
  \node (b2) at (3,2) {$(2,2)$};
  \node (b3) at (5,2) {$(3,2)$};
    \node (b4) at (7,2) {$(4,2)$};
   \node (c1) at (1,3) {$(1,3)$};
  \node (c2) at (3,3) {$(2,3)$};
  \node (c3) at (5,3) {$(3,3)$};
     \node (d1) at (1,4) {$(1,4)$};
  \node (d2) at (3,4) {$(2,4)$};
    \node (e1) at (1,5) {$(1,5)$};
\draw[->] (a1)--(a2);
\draw[->] (a2)--(a3);
\draw[->] (a3)--(a4);
\draw[->] (a4)--(a5);
\draw[->] (b1)--(b2);
\draw[->] (b2)--(b3);
\draw[->] (b3)--(b4);
\draw[->] (c1)--(c2);
\draw[->] (c2)--(c3);
\draw[->] (d1)--(d2);

\draw[->] (a1)--(b1);
\draw[->] (a2)--(b2);
\draw[->] (a3)--(b3);
\draw[->] (a4)--(b4);
\draw[->] (b1)--(c1);
\draw[->] (b2)--(c2);
\draw[->] (b3)--(c3);
\draw[->] (c1)--(d1);
\draw[->] (c2)--(d2);
\draw[->] (d1)--(e1);
\end{scope}
 \end{tikzpicture}}\]

Clearly every pair $a < b \in \cP'$ is in the image of an order-embedding $\cX'_3 \to \cP'$, and therefore by Proposition \ref{prop:X_3Rank}, 
the invariant
$\bm_{\cX'_3,\cP'}$ is strictly finer than the rank invariant $\brk_{\cP'}$. 
The fact that the invariants $\bm_{\cX'_3,\cP'}$ and $\brk_{\cP'}$ are not equivalent is shown in the proof of Proposition \ref{prop:X_3Rank} by embedding one single copy of $\cX'_3 \to \cP'$, and this can be realized  within the boundaries of $\cP$ as soon as $n,m >1.$
\end{proof}

The examples $\cX_1, \cX_2,\cX_3,\cX'_3,\cX''_3$ all have the property that they admit a unique  indecomposable representation which is non-zero at all vertices (such an indecomosable is called {\em sincere}).
We present now an example where this property fails, but we show that still the multiplicity invariant can be computed purely in terms of linear algebra of the representation $M$:

\begin{ex}\label{Ex::mult4}
Consider the poset $\cX_4$ of type $D_4$

 \[
  \scalebox{0.8}{
  \begin{tikzpicture}

\begin{scope}[xshift=9cm]
\node at (-1,0) {$\cX_4=$};
  \node (A) at (0,0) {$a$};
  \node (B) at (1,0) {$b$};
  \node (C) at (2,0.5) {$c$};
  
\node (D) at (2,-0.5) {$d$};

\draw[->] (A)--(B);
\draw[->] (C)--(B);
\draw[->] (D)--(B);
\end{scope}
 \end{tikzpicture}}\]     
 Then $\cX_4$ has two sincere indecomposable representations, one being the characteristic representation $\bI_{\cX_4}$, the other one is given by three one-dimensional subspaces in general position, placed at vertices $a,c,d$, of a two-dimensional vector space at vertex $b$. 
 Still, their multiplicities can be described in terms of the linear maps of the $\cP$-module $M$ restricted to $\cX_4$, for example the multiplicity of $\bI_{\cX_4}$ is given by 
 \[ \bm_{\cX_4,\cP} (\bI_{\cX_4}) = \dim (\Im M_{fa \to fb} \cap \Im M_{fc \to fb} \cap \Im M_{fd \to fb}).\]
\end{ex}

\section{Invariants and exact structures}

\subsection{Exact structures and relative projectives}

Exact structures have been introduced in
\cite{quillen} as an axiomatic framework allowing to use methods from homological algebra relative to a fixed class of short exact sequences. Besides this axiomatic approach, there are a number of ways to describe an exact structure  $\cE$, such as  by certain subfunctors $\cE$ of the bifunctor $\Ext_\cP^1$, or by specifying what are the projective objects relative to  $\cE$, or by the Auslander-Reiten sequences belonging to $\cE$.

Let $\cE$ be a class of short exact sequences in $\mods \cP$. We assume that $\cE$ is closed under isomorphisms. If a short exact sequence $\eta = (0 \to A \xrightarrow{f} E \xrightarrow{g} B \to 0)$
belongs to $\cE$, we say that $\eta$ is an ($\cE$-)\emph{admissible short exact sequence}, and also that $f$ is an ($\cE$-)\emph{admissible monomorphism} and $g$ is an ($\cE$-)\emph{admissible epimorphism}. The closure under isomorphisms then implies that the classes of admissible short exact sequences, admissible monomorphisms, and admissible epimorphisms uniquely determine one another.
Quillen's definition can then be rephrased as follows:
\begin{defn}\label{def:exact}
A class $\cE$  of short exact sequences in $\mods \cP$ is said to be an \emph{exact structure} on $\mods\cP$ if all of the following hold:
 \begin{itemize}
	\item[(E0)]  $\cE$ contains all split exact sequences.
        \item[(E1)] $\cE$ is closed under compositions of admissible monomorphisms, that is, if $f: X \to Y$ and $g: Y \to Z$ are admissible monomorphisms, then  $g \circ f : X \to Z$ is also an admissibe monomorphism.
        Likewise, $\cE$ is closed under compositions of admissible epimorphisms.
        \item[(E2)] $\cE$ is closed under pushouts: If $f: X \to Y$ is an admissible monomorphism,
        and $X\xrightarrow{h} W$ is any morphism in $\cA$, then the pushout of $f$ along $h$ yields a short exact sequence in $\cE$.
        Likewise, $\cE$ is closed under pullbacks.
	\end{itemize}
\end{defn}

 The two extreme examples of exact structures are the \emph{split exact structure} $\cE^\mathrm{sp}$, which contains only the split short exact sequences and corresponds to the subfunctor 0 of $\Ext^1_\cP$, and the exact structure $\cE^{\mathrm{all}}$, which contains all short exact sequences and corresponds to $\Ext^1_\cP$ as a subfunctor of itself.

\begin{prop}
    Let $\cP$ and $\cX$ be finite posets.  
    Let $\cE$ be an exact structure on $\mods \cX$ and let $\rE\subseteq \Emb_{\cX}^{\cP}$. Let $\Psi_\rE(\cE)$ denote the class of short exact sequences
    $$0 \rightarrow L \rightarrow M \rightarrow N \rightarrow 0$$
    in $\mods \cP$ which have the property that
    $$0 \rightarrow f^*L \rightarrow f^* M \rightarrow f^* N \rightarrow 0$$
    is $\cE$-exact for all $f \in \rE$. Then $\Psi_\rE(\cE)$ is an exact structure on $\mods \cP$.
\end{prop}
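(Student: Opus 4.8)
The plan is to verify the three axioms (E0), (E1), (E2) for $\Psi_\rE(\cE)$ directly, exploiting the fact that each restriction functor $f^*$ is exact. The key mechanism is that $f^*$ being exact means it sends short exact sequences in $\mods\cP$ to short exact sequences in $\mods\cX$, so the condition defining $\Psi_\rE(\cE)$ is well-posed; moreover, being exact and additive, $f^*$ preserves split exact sequences, kernels, cokernels, pushouts, and pullbacks, and commutes with finite direct sums. Since $\Psi_\rE(\cE)$ is defined as an intersection over $f \in \rE$ of preimages (under $f^*$) of the exact structure $\cE$, the strategy is essentially to push each axiom through $f^*$ and use that $\cE$ already satisfies it on $\mods\cX$.

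First I would check (E0): a split short exact sequence $0 \to L \to M \to N \to 0$ in $\mods\cP$ is sent by the additive functor $f^*$ to a split short exact sequence $0 \to f^*L \to f^*M \to f^*N \to 0$ in $\mods\cX$, which lies in $\cE$ by (E0) for $\cE$; hence the original sequence lies in $\Psi_\rE(\cE)$. Next I would treat (E1): if $g_1: X \to Y$ and $g_2: Y \to Z$ are $\Psi_\rE(\cE)$-admissible monomorphisms in $\mods\cP$, then for each $f \in \rE$ the maps $f^*g_1$ and $f^*g_2$ are $\cE$-admissible monomorphisms in $\mods\cX$; since $f^*$ is a functor, $f^*(g_2 \circ g_1) = f^*g_2 \circ f^*g_1$, which is an $\cE$-admissible monomorphism by (E1) for $\cE$. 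Here one must also note that $g_2 \circ g_1$ is genuinely a monomorphism with the expected cokernel in $\mods\cP$ (so that we are talking about an actual short exact sequence), which follows from standard facts about admissible monos, or can be checked pointwise. The argument for compositions of admissible epimorphisms is dual. For (E2): given a $\Psi_\rE(\cE)$-admissible monomorphism $\iota: X \to Y$ and an arbitrary morphism $h: X \to W$ in $\mods\cP$, form the pushout square in $\mods\cP$; since $f^*$ is exact it preserves pushouts along monomorphisms (equivalently, it sends the short exact sequence $0 \to X \xrightarrow{(\iota, -h)} Y \oplus W \to \mathrm{pushout} \to 0$ to the analogous one), so the image under $f^*$ of the resulting short exact sequence is the pushout in $\mods\cX$ of the $\cE$-admissible monomorphism $f^*\iota$ along $f^*h$, hence lies in $\cE$ by (E2) for $\cE$. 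The closure under pullbacks is dual.

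The main obstacle, if any, is bookkeeping rather than conceptual difficulty: one must be careful that the classes of admissible monomorphisms, epimorphisms, and short exact sequences defined via $\Psi_\rE(\cE)$ are mutually consistent (i.e. that an admissible mono really does sit in an admissible short exact sequence), and that $f^*$ genuinely preserves the relevant (co)limits. The cleanest way to handle the latter is to invoke that $f^*$ is exact (stated in the excerpt, since $f^*$ is exact and additive) together with the well-known fact that an exact functor between abelian categories preserves pushouts of monomorphisms and pullbacks of epimorphisms; alternatively, since everything is a functor category over $\mods k$, one may verify all the needed preservation statements pointwise, where they are immediate. With these preservation properties in hand, each axiom for $\Psi_\rE(\cE)$ reduces formally to the corresponding axiom for $\cE$, applied simultaneously for every $f \in \rE$.
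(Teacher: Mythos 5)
Your proof is correct and follows essentially the same route as the paper: verify (E0), (E1), (E2) for $\Psi_\rE(\cE)$ by pushing each axiom through the exact, additive restriction functor $f^*$ and invoking the corresponding axiom for $\cE$ on $\mods\cX$. Your write-up is somewhat more detailed on the pushout/pullback preservation and the bookkeeping, but it is the same argument.
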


\begin{proof}
We verify the three points from definition \ref{def:exact} for all $f \in \rE$, using that the restriction functor $f^* : \mods \cP \to \mods \cX$ is additive and exact.

\begin{itemize}
     \item[(E0)] A split exact sequence is sent by $f^*$ to a split exact sequence, which belongs necessarily to $\cE$, therefore $\Psi(\cE)$ contains all split exact sequences.
     \item[(E1)] This is clear from the definition of $\Psi(\cE)$ and the fact that (E1) holds for $\cE$.
     \item[(E2)] Follows from the fact that (E2) holds for $\cE$ and that $f^*$ preserves pushouts since it is an exact functor. \qedhere
\end{itemize}
\end{proof}

\begin{defn}
Let $\cE$ be an exact structure on $\mod \cP$. We denote by ${\rm K}_0(\cE)$ the quotient of $\Ksp(\cP)$ by the relations 
$$[L]+[N]-[M]\textrm{ for any short exact sequence } (0 \rightarrow L \rightarrow M \rightarrow N \rightarrow 0) \textrm{ in }\cE.$$
\end{defn}

\begin{prop} Let $\cX$ be a representation-finite poset and $\cP$ be a finite poset. Recall that $\rI=\Ind \cX$ and fix $\rE\subseteq \Emb_{\cX}^{\cP}$. Then denote by $\cE^{\rE}_{\cP}:=\Psi_\rE(\cE^{\rm sp}_\cX)$, where $\cE_\cX^{\rm sp}$ is the split exact structure on $\mod \cX$. Then we have a factorization  of the commutative square given in Theorem \ref{thm::mult_dimhom_invariants}

$$\xymatrix{\Ksp(\cP)\ar@{=}[dd] \ar[rr]^{\bm_{\cX,\cP}^{\rE}}\ar@{->>}[dr] & & \bZ^{\rE\times \rI}\ar[dd]^{\sim} \\ & {\rm K}_0(\cE_{\cP}^{\rE})\ar[ur]\ar[dr] & \\ \Ksp(\cP) \ar[rr]^{\bdh_{\cX,\cP}^{\rE}}\ar@{->>}[ur] & & \bZ^{\rE\times \rI}}$$
\end{prop}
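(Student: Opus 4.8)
The plan is to establish the commutative diagram by factoring each of the equivalent invariants $\bm_{\cX,\cP}^{\rE}$ and $\bdh_{\cX,\cP}^{\rE}$ through the quotient ${\rm K}_0(\cE_{\cP}^{\rE})$. First I would observe that the right vertical isomorphism $\bZ^{\rE\times \rI}\xrightarrow{\sim}\bZ^{\rE\times\rI}$ is exactly the map $\Phi^{\rE}$ from the proof of Proposition~\ref{prop::mult_dimhom_invariants}, so the outer square of the diagram already commutes; it remains only to construct the two diagonal maps out of and into ${\rm K}_0(\cE_{\cP}^{\rE})$ and to check that the resulting triangles commute. Since $\bm_{\cX,\cP}^{\rE}$ and $\bdh_{\cX,\cP}^{\rE}$ are related by the fixed isomorphism $\Phi^{\rE}$, it suffices to produce the factorization for one of them, say $\bm_{\cX,\cP}^{\rE}$, and then transport it to $\bdh_{\cX,\cP}^{\rE}$ by composing with $\Phi^{\rE}$; the bottom triangle is then obtained from the top one by the same isomorphism.

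The key step is therefore: the map $\bm_{\cX,\cP}^{\rE}:\Ksp(\cP)\to\bZ^{\rE\times\rI}$ kills every relation defining ${\rm K}_0(\cE_{\cP}^{\rE})$, hence descends to a (uniquely determined) $\bZ$-linear map ${\rm K}_0(\cE_{\cP}^{\rE})\to\bZ^{\rE\times\rI}$. To see this, take a short exact sequence $0\to L\to M\to N\to 0$ in $\cE_{\cP}^{\rE}=\Psi_\rE(\cE_\cX^{\rm sp})$; by definition of $\Psi_\rE$, for every $f\in\rE$ the restricted sequence $0\to f^*L\to f^*M\to f^*N\to 0$ splits in $\mod\cX$. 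A split short exact sequence has $f^*M\simeq f^*L\oplus f^*N$, and multiplicities of indecomposables are additive on direct sums (Krull--Schmidt), so for every $U\in\rI$ we get $\mathrm{mult}_{\cX}(U,f^*M)=\mathrm{mult}_{\cX}(U,f^*L)+\mathrm{mult}_{\cX}(U,f^*N)$. This says precisely $\bm_{\cX,\cP}^{\rE}([M])=\bm_{\cX,\cP}^{\rE}([L])+\bm_{\cX,\cP}^{\rE}([N])$, i.e.\ $\bm_{\cX,\cP}^{\rE}$ vanishes on the relation $[L]+[N]-[M]$. Since these relations generate the kernel of the canonical surjection $\Ksp(\cP)\twoheadrightarrow{\rm K}_0(\cE_{\cP}^{\rE})$, the universal property of the quotient yields the dashed arrow ${\rm K}_0(\cE_{\cP}^{\rE})\to\bZ^{\rE\times\rI}$ making the upper-left triangle commute. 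Composing with $\Phi^{\rE}$ gives the lower arrow ${\rm K}_0(\cE_{\cP}^{\rE})\to\bZ^{\rE\times\rI}$ and commutativity of the lower-left triangle follows from Proposition~\ref{prop::mult_dimhom_invariants}.

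I do not expect a serious obstacle here: the statement is labelled "immediate", and indeed the only content is the additivity of multiplicities on split sequences together with the universal property of ${\rm K}_0(\cE)$ as a quotient of $\Ksp(\cP)$. The one point requiring a word of care is that the relations $[L]+[N]-[M]$ for sequences in $\cE_{\cP}^{\rE}$ are exactly the relations defining ${\rm K}_0(\cE_{\cP}^{\rE})$ (this is the definition), and that the factorization map is automatically $\bZ$-linear because it is induced on the quotient of a free abelian group by a map that is already $\bZ$-linear. The two right vertical arrows being identified with the single isomorphism $\Phi^{\rE}$ (which is both what appears in the diagram and what realizes $\bm_{\cX,\cP}^{\rE}\simeq\bdh_{\cX,\cP}^{\rE}$) closes the argument.
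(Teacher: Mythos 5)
Your proof is correct and fills in exactly the details the paper leaves implicit (the paper simply states the proposition is "immediate" after the relative-projective discussion). The core observation — that an admissible short exact sequence in $\cE_\cP^{\rE}=\Psi_\rE(\cE_\cX^{\rm sp})$ restricts along each $f\in\rE$ to a split sequence, so Krull--Schmidt gives additivity of multiplicities (equivalently, additivity of $\dim\Hom_\cX(U,-)$) and hence the invariant kills the defining relations of ${\rm K}_0(\cE_\cP^{\rE})$ — is precisely the argument needed, and your use of the universal property of the quotient and transport along $\Phi^{\rE}$ to get the lower triangle is the right way to assemble the diagram.
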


\begin{proof}Let $f:\cX\to \cP$ be in $\rE$ and  $(0 \rightarrow L \rightarrow M \rightarrow N \rightarrow 0)$ be an $\cE$-exact sequence. Then by definition we have an isomorphism $f^*M\simeq f^*N\oplus f^*L$. Therefore for each  $U\in \Ind \cX$ we have $$\bm_{\cX,\cP}^\rE(M)_{(f,U)}:=\bm_\cX(f^*M)_U=\bm_{\cX}(f^*N)_U+\bm_\cX(f^*L)_U,$$
which proves the desired factorization.
\end{proof}

\subsection{Relative projectives}

    The projective objects (relative to $\cE$) are then described as follows:
\begin{defn}\label{def:projectives}
Let $\cE$ be an exact structure on $\mods\cP$.
 \begin{enumerate}
     \item We say an object $P \in \mods\cP$ is \emph{$\cE$-projective}, in symbols $P \in \proj(\cE)$, if $g_* = \Hom_\cP(P,g)$ is an epimorphism for all $Y, Z \in \mods\cP$ and for all admissible epimorphisms $g: Y \to Z$. Equivalently, $P \in \proj(\cE)$ if and only if $\Hom_\cA(P,-)$ sends admissible exact sequences to exact sequences.
     \item We say an object $I \in \mods\cP$ is \emph{$\cE$-injective}, in symbols $I \in \inj(\cE)$, if $f_* = \Hom_\cP(f,I)$ is an epimorphism for all $X, Y \in \mods\cP$ and for all admissible monomorphisms $f: X \to Y$. Equivalently, $I \in \inj(\cE)$ if and only if $\Hom_\cP(-,I)$ sends admissible exact sequences to exact sequences.
 \end{enumerate}
\end{defn}

The following is a classical result of \cite{AS,DRSS}. See \cite[Section~4]{BBH} for a summary tailored towards persistence theory.

\begin{prop}\label{prop:proj-exact} 
Let $\cU\subseteq \Ind \cP$. Denote by 
$$\cE_{\cU}:=\left\{ (0\to M\to N\overset{g}{\to} L\to 0) \mid  \forall U\in \cU, \ \Hom_\cP(U,g) \textrm{ is surjective.}\right\}$$
Then $\cE_\cU$ is an exact structure on $\mod \cP$, and we have 
$$\proj(\cE_\cU)=\cU \cup \proj (\cP).$$
\end{prop}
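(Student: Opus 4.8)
The plan is to verify the two assertions in order: first that $\cE_\cU$ is an exact structure, then that its projective objects are exactly $\cU \cup \proj(\cP)$. For the first part, I would observe that $\cE_\cU$ is precisely the class of short exact sequences on which the (not-necessarily-additive-valued) functor $\bigoplus_{U \in \cU}\Hom_\cP(U,-)$ remains exact; equivalently, $\cE_\cU = \bigcap_{U \in \cU}\cE_{\{U\}}$, and an intersection of exact structures is an exact structure, so it suffices to treat a single $U$. For a single $U$, the axioms (E0)--(E2) of Definition~\ref{def:exact} follow from standard diagram-chasing: (E0) holds because $\Hom_\cP(U,-)$ of a split epimorphism is a split epimorphism; (E1) because a composition of surjections is a surjection (and the left-exactness of $\Hom$ handles the monomorphism side via the snake lemma); (E2) because $\Hom_\cP(U,-)$ is left exact and pushouts/pullbacks of short exact sequences in $\mod\cP$ fit into the usual $3\times 3$ diagrams, from which surjectivity of the relevant $\Hom(U,g)$ is inherited. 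This is the routine half and I would cite \cite{AS,DRSS} (or \cite[Section~4]{BBH}) rather than spell out every chase.

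For the identification $\proj(\cE_\cU) = \cU \cup \proj(\cP)$, the inclusion $\supseteq$ is immediate: every $U \in \cU$ is $\cE_\cU$-projective essentially by definition (for any $\cE_\cU$-admissible epimorphism $g$, the map $\Hom_\cP(U,g)$ is surjective), and every projective $\cP$-module $P$ is $\cE$-projective for \emph{every} exact structure $\cE$ on $\mod\cP$ since $\Hom_\cP(P,-)$ is exact on all short exact sequences, in particular on the admissible ones. The content is the reverse inclusion: if $Q \in \proj(\cE_\cU)$ then every indecomposable summand of $Q$ lies in $\cU \cup \proj(\cP)$. Here I would take an indecomposable summand $Q'$ of $Q$ (using Krull--Schmidt, and that a summand of an $\cE$-projective is $\cE$-projective) and a projective cover (or any surjection from a projective) $p: P \twoheadrightarrow Q'$ with kernel $K$, giving a short exact sequence $0 \to K \to P \to Q' \to 0$. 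If this sequence lies in $\cE_\cU$, then $\cE_\cU$-projectivity of $Q'$ forces it to split, so $Q'$ is a summand of $P$, hence projective. If it does not lie in $\cE_\cU$, then by definition there is some $U \in \cU$ with $\Hom_\cP(U,p)$ not surjective — but I would argue that, after adjusting $U$ within its isoclass, this failure detects $Q' \cong U$; concretely, one shows that the only indecomposables $Q'$ for which there exists $U\in\cU$ with $\Hom_\cP(U,P\twoheadrightarrow Q')$ non-surjective for \emph{every} choice of projective cover are exactly the $U$'s themselves, and conclude $Q' \in \cU$.

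The main obstacle I anticipate is precisely this last step: cleanly extracting ``$Q' \in \cU \cup \proj(\cP)$'' from non-$\cE_\cU$-splitness, rather than just ``$Q'$ is not $\cE_\cU$-projective,'' without circularity. The clean way to organize it is via the theory of \cite{AS,DRSS}: the exact structures of the form $\cE_\cU$ are exactly those with \emph{enough projectives} whose indecomposable relative-projectives are a fixed set containing $\proj(\cP)$, and the correspondence $\cU \mapsto \cE_\cU$, $\cE \mapsto (\proj(\cE)\cap\Ind\cP)$ is a bijection; invoking that correspondence gives $\proj(\cE_\cU)\cap\Ind\cP = \cU\cup(\proj(\cP)\cap\Ind\cP)$ directly, and then $\proj(\cE_\cU) = \add(\cU\cup\proj(\cP))$ by Krull--Schmidt, which is what the statement asserts (interpreting the right-hand side as closure under summands and sums). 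So in the write-up I would lean on the cited classical results for the bijection and only verify the easy inclusions and the translation by hand.
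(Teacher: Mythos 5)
The paper gives no proof of this proposition --- it cites \cite{AS,DRSS} and points to \cite[Section~4]{BBH} for a persistence-theoretic summary --- so your ultimate fallback of invoking the classical bijection between exact structures with enough projectives and their sets of indecomposable relative projectives is exactly in line with the paper. Your treatment of the exact-structure axioms (intersection over $U$, then (E0)--(E2) for a single $U$) and of the inclusion $\cU \cup \proj(\cP) \subseteq \proj(\cE_\cU)$ are both fine.

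However, the concrete argument you sketch for $\proj(\cE_\cU) \subseteq \add(\cU \cup \proj(\cP))$ has a genuine gap, and it is exactly the one you flag as an ``obstacle.'' Starting from a projective cover $0 \to K \to P \xrightarrow{p} Q' \to 0$ and splitting into the cases ``lies in $\cE_\cU$'' / ``does not lie in $\cE_\cU$'' does not work: in the second case, the existence of some $U \in \cU$ with $\Hom_\cP(U,p)$ non-surjective tells you nothing that forces $Q' \cong U$ (and there is no reason that ``non-surjectivity for every projective cover detects $Q' \in \cU$''). The right move is to replace the projective cover by a \emph{right $\add(\cU \cup \proj(\cP))$-approximation} $q : V \twoheadrightarrow Q'$ (which exists since $\cU \cup \proj(\cP)$ is a finite set of indecomposables; add a projective summand to $V$ to make $q$ surjective). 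By construction of the approximation, the resulting short exact sequence $0 \to \ker q \to V \xrightarrow{q} Q' \to 0$ \emph{does} lie in $\cE_\cU$, because $\Hom_\cP(U,V) \to \Hom_\cP(U,Q')$ is surjective for every $U \in \cU$. Now $\cE_\cU$-projectivity of $Q'$ forces this sequence to split, so $Q'$ is a summand of $V \in \add(\cU \cup \proj(\cP))$, and you are done without any case split. This is precisely what the Auslander--Solberg/DRSS machinery packages, but once you see the approximation trick it is a two-line direct argument and you do not need the full bijection.
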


The following is classical, see \cite[Theorem~I.6.8]{BlueBook}.

\begin{prop}
Let $f:\cX\to \cP$ be an order embedding. Then the restriction functor $f^*:\mod \cP\to \mod \cX$ is exact and has a left adjoint $f_!:\mod \cX\to \mod \cP$ and a right adjoint $f_*:\mod \cX\to \mod \cP$. Moreover we have $f^*f_!={\rm Id}_{\mod \cX}  = f^*f_*$.
\end{prop}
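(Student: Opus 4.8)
The plan is to establish the three claims of the proposition—exactness of $f^*$, existence of the adjoints $f_!$ and $f_*$, and the relations $f^*f_! = \mathrm{Id} = f^*f_*$—in that order, using that $f$ is an order-embedding so that $f(\cX)$ is a full subposet of $\cP$ isomorphic to $\cX$. Exactness of $f^*$ is immediate: restriction along any poset map is computed pointwise on the underlying vector spaces, and (co)kernels in $\mod\cP$ and $\mod\cX$ are also computed pointwise, so $f^*$ preserves short exact sequences; alternatively one can just cite the fact that $f^*$ was already observed to be exact earlier in the paper.

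For the adjoints, the main point is the general nonsense that the restriction functor along a functor between small categories always admits a left Kan extension (left adjoint) and a right Kan extension (right adjoint), provided the target category $\mod k$ is cocomplete and complete—which it is for finite posets, where all the relevant (co)limits are finite. Concretely, I would define $f_!$ via the coend/colimit formula $(f_! V)_p = \mathrm{colim}_{x \in \cX,\, f(x)\le p} V_x$ and $f_*$ via the limit formula $(f_* V)_p = \lim_{x\in\cX,\, p \le f(x)} V_x$; since $\cX$ is finite these are finite (co)limits of finite-dimensional vector spaces, hence again finite-dimensional, so $f_!V, f_*V \in \mod\cP$. The adjunction isomorphisms $\Hom_\cP(f_!V, M) \cong \Hom_\cX(V, f^*M)$ and $\Hom_\cX(f^*M, V)$... wait, rather $\Hom_\cP(M, f_*V)\cong \Hom_\cX(f^*M, V)$, are the standard Kan extension adjunctions. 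I would either cite \cite[Theorem~I.6.8]{BlueBook} as the excerpt suggests and sketch the formulas, or verify the adjunction directly by the usual manipulation of natural transformations.

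For the last assertion $f^*f_! = \mathrm{Id}_{\mod\cX} = f^*f_*$, this is exactly the statement that the Kan extensions along a fully faithful functor are genuine extensions. Here is where the order-embedding hypothesis is essential: because $f$ is an isomorphism onto a full subposet, for $x\in\cX$ the indexing category $\{x'\in\cX : f(x')\le f(x)\}$ has the terminal object $x$ itself (since $f(x')\le f(x)$ forces $x'\le x$), so the colimit defining $(f_!V)_{f(x)}$ collapses to $V_x$; dually the limit defining $(f_*V)_{f(x)}$ collapses to $V_x$ because $x$ is initial in $\{x' : f(x)\le f(x')\}$. One checks the structure maps agree, giving natural isomorphisms $f^*f_! \cong \mathrm{Id} \cong f^*f_*$. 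The main obstacle—if there is one—is purely bookkeeping: making the colimit/limit formulas explicit enough over a finite poset that one can see the relevant indexing categories have terminal/initial objects, and checking naturality; there is no real mathematical difficulty, which is presumably why the authors label it ``classical'' and defer to \cite{BlueBook}.
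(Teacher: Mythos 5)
The paper gives no proof here---it merely states the result is classical and cites \cite[Theorem~I.6.8]{BlueBook}---so there is nothing to compare your argument against directly. Your sketch correctly supplies the standard argument: $f^*$ is exact because (co)kernels in functor categories are computed pointwise; $f_!$ and $f_*$ are the pointwise left/right Kan extensions, which exist because the comma categories over a finite poset are finite and $\mod k$ has finite (co)limits; and the identities $f^*f_! \cong \mathrm{Id} \cong f^*f_*$ follow because an order-embedding is precisely a fully faithful functor between poset categories, so the comma category $\{x' \in \cX : f(x') \le f(x)\}$ has $x$ as its terminal object (and dually $\{x' : f(x) \le f(x')\}$ has $x$ as its initial object), collapsing the defining (co)limit to $V_x$. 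One could tighten the phrasing slightly---you should say ``fully faithful'' rather than ``isomorphism onto a full subposet,'' and note that faithfulness is automatic for poset functors while fullness plus antisymmetry forces injectivity---but the mathematical content is correct and matches what the cited reference proves.
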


The functors $f_*$  and $f_!$ are sometimes referred to as the \emph{induction} and \emph{coinduction} functors, or {\em right} and {\em left Kan extensions}, respectively. The induction functor preserves projective presentations and the coinduction functor preserves injective copresentations.

\begin{prop}\label{prop::relative-proj}
Let $\cX$ be a representation-finite poset and $\cP$ be a finite poset. Recall that $\rI =\Ind\cX$ and fix $\rE$ a subset of $\Emb_{\cX}^{\cP}$. Assume that $\cP=\bigcup_{f\in \rE}\Im f$. Define 
$$\cH^{\rE}:=\{f_!U\mid (f,U)\in \rE\times \rI\}.$$  Then the set $\cH^{\rE}$ is a subset of $\Ind\cP$ that contains the indecomposable projective modules. And we have 
$$\cE_{\cP}^{\rE}=\cE_{\cH^{\rE}} \textrm{ and } \proj(\cE_{\cP}^\rE)= \cH^\rE.$$
\end{prop}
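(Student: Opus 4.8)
The plan is to prove the three assertions of Proposition~\ref{prop::relative-proj} in turn: first that each $f_!U$ is indecomposable (so $\cH^\rE \subseteq \Ind\cP$) and that $\cH^\rE$ contains all indecomposable projectives, then that $\cE_\cP^\rE = \cE_{\cH^\rE}$, and finally that $\proj(\cE_\cP^\rE) = \cH^\rE$ using Proposition~\ref{prop:proj-exact}.

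\textbf{Step 1: $\cH^\rE\subseteq\Ind\cP$ and contains the projectives.} For a fixed $f\in\rE$ and $U\in\rI$, the key input is $f^*f_! = \mathrm{Id}_{\mod\cX}$, so $f^*$ restricts to a functor $\add(f_!U)\to\add(U)$ which, by adjunction $\Hom_\cP(f_!U,f_!U)\cong\Hom_\cX(U,f^*f_!U)=\Hom_\cX(U,U)=k$ (the last equality because $\cX$ is representation-finite, hence every indecomposable is a brick, as recalled in the proof of Proposition~\ref{prop::mult-dimhom}), forces $\End_\cP(f_!U)\cong k$; thus $f_!U$ is indecomposable. For the projectives: since the induction functor $f_!$ preserves projective presentations, it sends the indecomposable projective $\cX$-module $P^\cX_x = \bI_{[x,\infty)}$ to a projective $\cP$-module, and a direct check on representables gives $f_!(P^\cX_x) = P^\cP_{f(x)}$; since $\cP = \bigcup_{f\in\rE}\Im f$, every vertex of $\cP$ arises as some $f(x)$, so $\proj(\cP)\subseteq\cH^\rE$.

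\textbf{Step 2: $\cE_\cP^\rE = \cE_{\cH^\rE}$.} By definition $\cE_\cP^\rE = \Psi_\rE(\cE^{\mathrm{sp}}_\cX)$ consists of those short exact sequences $0\to L\to M\xrightarrow{g} N\to 0$ in $\mod\cP$ whose restriction along every $f\in\rE$ splits in $\mod\cX$. A short exact sequence in $\mod\cX$ splits iff $\Hom_\cX(U,f^*g)$ is surjective for every $U\in\Ind\cX$ (since $\proj(\cE^{\mathrm{sp}}_\cX)=\mod\cX$, or concretely because the identity of $N$ lifts when tested against all indecomposables). So $0\to L\to M\xrightarrow{g}N\to 0$ lies in $\cE_\cP^\rE$ iff $\Hom_\cX(U,f^*g)$ is surjective for all $(f,U)\in\rE\times\rI$. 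Now apply the adjunction isomorphism $\Hom_\cX(U,f^*g) \cong \Hom_\cP(f_!U, g)$, which is natural, to see this holds iff $\Hom_\cP(f_!U,g)$ is surjective for all $(f,U)\in\rE\times\rI$, i.e.\ iff $g$ lies in $\cE_{\cH^\rE}$ in the notation of Proposition~\ref{prop:proj-exact}. Hence $\cE_\cP^\rE=\cE_{\cH^\rE}$.

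\textbf{Step 3: $\proj(\cE_\cP^\rE) = \cH^\rE$.} Proposition~\ref{prop:proj-exact} applied to $\cU = \cH^\rE$ gives $\proj(\cE_{\cH^\rE}) = \cH^\rE\cup\proj(\cP)$, and combined with Step 1 ($\proj(\cP)\subseteq\cH^\rE$) and Step 2 ($\cE_{\cH^\rE}=\cE_\cP^\rE$) this yields $\proj(\cE_\cP^\rE)=\cH^\rE$. The main obstacle is Step 1: one must be careful that the adjunction really does compute $\End_\cP(f_!U)$ as claimed and that $f_!$ sends indecomposable projectives precisely to indecomposable projectives indexed by the image vertices; the brick property of indecomposables over representation-finite $\cX$ is what makes the endomorphism computation go through, and it is worth spelling out the identification $f_!(P^\cX_x)=P^\cP_{f(x)}$ at the level of the defining functors rather than treating it as obvious.
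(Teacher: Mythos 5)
Your proof is correct and follows essentially the same route as the paper: compute $\End_\cP(f_!U)$ via adjunction and $f^*f_! = \mathrm{Id}$ to get indecomposability, use that $f_!$ preserves projectives together with the covering hypothesis to get $\proj(\cP)\subseteq\cH^\rE$, translate the splitting condition through adjunction into surjectivity of $\Hom_\cP(f_!U,-)$, and finish with Proposition~\ref{prop:proj-exact}. The only stylistic difference is that the paper first records $\End_\cX(U)$ is local (sufficient for indecomposability) and then remarks it is a brick, whereas you invoke the brick property directly; both are valid under the representation-finiteness hypothesis.
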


\begin{proof}
First note that since $\cP=\bigcup_{f\in \rE}\Im f$, for any $p\in \cP$ there exists $x\in \cX$   and $f\in \rE$ such that $f(x)=p$. Since the functor $f_!$ sends projectives to projectives, we have that $f_!(P_x)=P_p$, that is the projective $P_p$ is in $\cH^\rE$. Moreover by the adjunction formula we have for any $U\in \Ind\cX$ 
$$\End_{\cP}(f_!U)\simeq \Hom_{\cX}(U,f^*f_!U)=\End_{\cX}(U)$$ which is local since $U\in \Ind \cX$. 

Let $$\xymatrix{\epsilon= (0\ar[r] & L\ar[r] & M\ar[r] & N\ar[r] & 0)}$$ be a short exact sequence of $\mod \cP$. Then by definition $\epsilon$ belongs to $\cE_\cP^E$ if and only if for any $f\in \rE$ the short exact sequence in $\mod \cX$ 
$$\xymatrix{ 0\ar[r] & f^*L\ar[r] & f^*M\ar[r] & f^*N\ar[r] & 0}$$ splits. This is equivalent to the fact that for any $U\in \Ind\cX$ this exact sequence yields a short exact sequence in $\mod k$
$$\xymatrix{ 0\ar[r] & \Hom_\cX(U,f^*L)\ar[r] & \Hom_{\cX}(U,f^*M)\ar[r] & \Hom_{\cX}(U,f^*N)\ar[r] & 0.}$$
By adjunction it is equivalent to the fact that for any $(f,U)\in \rE\times \rI$ the short exact sequence $\epsilon$ induces an exact sequence
$$\xymatrix{ 0\ar[r] & \Hom_\cP(f_!U,L)\ar[r] & \Hom_{\cP}(f_!U,M)\ar[r] & \Hom_{\cP}(f_!U,N)\ar[r] & 0,}$$ which is equivalent to the fact that $\epsilon\in \cE_{\cH^\rE}$. 
The fact that $\proj(\cE_{\cH^\rE})= \cH^\rE$ is now a direct consequence of Proposition \ref{prop:proj-exact}.
\end{proof}

\begin{rem}\label{rem:dimhom}
    Consider the setup of Proposition~\ref{prop::relative-proj}. It is an immediate consequence of the adjunction formula that the invariants $\bdh_{\cX,\cP}^{\rE}$ and $\bdh_{\cP}^{\cH^{\rE}}$ are equivalent. In particular, $\bdh_{\cX,\cP}^{\rE}$ (and thus also $\bm_{\cX,\cP}^{\rE}$ by Theorem~\ref{thm::mult_dimhom_invariants}) is a dim-Hom invariant in the sense of Example~\ref{example:invariants}(4). In order to prove the stronger property that these invariants are \emph{homological} (in the sense of \cite[Defintion~4.12]{BBH}), 
    one would need an argument showing that the exact structure $\cE_{\cP}^{\rE} = \cE_{\cH^\rE}$ has finite global dimension, which is difficult to establish in general.
    
\end{rem}

From Proposition~\ref{prop::relative-proj} we can deduce the following commutative diagram.
$$\xymatrix{\Ksp(\cP)\ar@{=}[dd] \ar[rrrr]^{\bm_{\cX,\cP}^{\rE}}\ar@{->>}[dr] && && \bZ^{\rE\times \rI}\ar[dd]^{\Phi} \\ & {\rm K}_0(\cE_{\cP}^{\rE})\ar[r] & \bZ^{\cH^{\rE}} \ar[r]^-{\sim}&\bZ^{\rE\times \rI/\sim_!}\ar@{^(->}[dr] & \\ \Ksp(\cP) \ar[rrrr]^{\bdh_{\cX,\cP}^{\rE}}\ar@{->>}[ur] \ar[urr]_{\bdh_\cP^{\cH^\rE}}& & &&\bZ^{\rE\times \rI}}$$
where the equivalence relation $\sim_!$ on $\rE\times \rI$ is defined by 
$$(f,U)\sim_!(g,V)\quad \Leftrightarrow\quad f_!U\simeq g_!V $$ and the map $\bZ^{\cH^\rE}\to \bZ^{\rE\times \rI/\sim_!}$ is induced by the map $\rE\times \rI\to \cH^\rE$ defined by $(f,U)\mapsto f_!U$.

\begin{ex}\label{ex::f!U modules}
\begin{enumerate}

\item 
Consider the poset $\cX=\cX_2 = \{1 \to 2 \}$ as in Example \ref{Ex::mult2}(2). Then the indecomposable $\cX_2$-modules are $P_1 = \bI_{\{1,2\}}$, $P_2 = \bI_{\{2\}}$ and $H_{1,2} = \bI_{\{1\}}$. The first two modules are projective, and the third is a hook
(and is the cokernel of the natural map $P_2 \rightarrow P_1$). Since $f_!$ is right exact and sends projectives to projectives, one obtains

$$\cH^{\rE} = \bigcup_{f\in \Emb_{\cX_2}^\cP} \{f_{!}U \ |\ U\in \mod \cX_2\}=\{P_{f1}, P_{f2}, H_{f1,f2}  \mid  f\in \Emb_{\cX_2}^\cP \}.$$ 
Thus Proposition \ref{prop::relative-proj} applied to the case $\cX=\cX_2$ recovers  the result  \cite[Theorem 4.11]{BOO} which desrcibes the indecomposable projective objects for the rank exact structure as the (lower) hook modules.

\item Consider $\cX=\cX'_3$ as in Proposition \ref{prop:X_3Rank}. 
 Using the fact that $f_!$ is right exact, and computing projective presentations of all indecomposables for $\cX=\cX'_3$, we obtain that
for $f\in \Emb_{\cX_2}^\cP$ we have 
$$\{f_!U\mid U\in I\}= \left\{P_{f1}, P_{f2}, P_{f3}, H_{f1,f2}, H_{f1,f3}, M:= \Coker (P_{f2}\oplus P_{f3}\to P_{f1})\right\}.$$
 Note that each of these is an interval module.
The modules $H_{f1,f2}$ and $H_{f1,f3}$ are hooks, while the module $M =\Coker (P_{f2}\oplus P_{f3}\to P_{f1})$ 
satisfies
$$\dim M_x = \left\{ \begin{array}{ll} k & \textrm{if } f(1)\leq x \textrm{ and } (x\ngeq f(2) \textrm{ or } x\ngeq f(3))\\ 0 & \textrm{else}.\end{array}\right.$$

 In the case where $\cP$ is a grid poset, the $\cP$-module $M$ looks as follows.
 \[
  \scalebox{0.8}{
  \begin{tikzpicture}

\draw (0,0)--(6,0)--(6,4)--(0,4)--(0,0);

\draw[blue!30, fill=blue!30] (1,1)--(6,1)--(6,2)--(4,2)--(4,3)--(2,3)--(2,4)--(1,4)--(1,1);

\draw[thick] (1,4)--(1,1)--(6,1);
\draw[thick, loosely dotted] (6,2)--(4,2)--(4,3)--(2,3)--(2,4);
\node at (0.8,0.8) {$f(1)$};
\node at (4.3,2.2) {$f(2)$};
\node at (2.3,3.2) {$f(3)$};
\node at (2,2) {$k$};

\end{tikzpicture}}\]

 \item The situation is surprisingly different for $\cX=\cX''_3$ as in Proposition \ref{prop:X_3Rank}. 
 
 Here we have an $\cX_3''$-module $\bI_{\cX_3''} = \Coker(P_3 \rightarrow P_1\oplus P_2)$. The corresponding $\cP$-modules $M = f_!\bI_{\cX_3''} = \Coker(P_{f3}\rightarrow P_{f1}\oplus P_{f2})$ satisfy
 $$\dim M_x = \left\{ \begin{array}{ll} k & \textrm{if } x\geq f(1)
 \\ k & \textrm{if } x\geq f(2) \textrm{ and } x\ngeq f(3)
\\ k & \textrm{if } x\geq f(3) \textrm{ and } x\ngeq f(2)
\\ k^2 & \textrm{if } x\geq f(2),\ x\geq f(3) \textrm{ and } x\ngeq f(1)
 \\ 0 & \textrm{else},\end{array}\right.$$
and hence are not  interval modules. 

 In the case where $\cP$ is a grid poset, the $\cP$-module $M$ looks as follows.
 \[
  \scalebox{0.8}{
  \begin{tikzpicture}

\draw (0,0)--(6,0)--(6,4)--(0,4)--(0,0);

\draw[blue!30, fill=blue!30] (1,2)--(2,2)--(2,4)--(1,4)--(1,2);

\draw[blue!30, fill=blue!30] (2,1)--(6,1)--(6,2)--(2,2)--(2,1);

\draw[blue!30, fill=blue!30] (4,3)--(6,3)--(6,4)--(4,4)--(4,3);

\draw[blue!40, fill=blue!40] (2,2)--(6,2)--(6,3)--(4,3)--(4,4)--(2,4)--(2,2);

\node at (4.2,3.2) {$f(3)$};
\node at (0.8,2) {$f(2)$};
\node at (2,0.8) {$f(1)$};
\node at (1.5,3) {$k$};
\node at (4,1.5) {$k$};
\node at (5,3.5) {$k$};
\node at (3,3) {$k^2$};

\end{tikzpicture}}\]

\end{enumerate}

\end{ex}

\subsection{Bases for invariants and relative projectives}

\begin{defn}\label{def::basis} Let $\Psi:\Ksp(\cP)\to \bZ^N$ be a finite invariant. A \emph{basis} of $\Psi$ is a set $\cB$ of indecomposable $\cP$-modules such that $\Psi$ induces an isomorphism of abelian group
$$\langle [U], U\in \cB\rangle \longrightarrow \Im\Psi,$$
where $\langle [U], U\in \cB\rangle$ is the free abelian subgroup of $\Ksp(\cP)$ generated by the $[U]$ for $U\in \cB$.
\end{defn}

As a consequence, if $\cB$ is a $\Psi$-basis, then for any module $M\in \mod \cP$ there exist unique integers $(\alpha_U)_{U\in\cB}$  such that 
$$\Psi(M)=\Psi \left(\bigoplus_{U,\alpha_U> 0} U^{\alpha_U}\right)-\Psi \left(\bigoplus_{U,\alpha_U <0} U^{-\alpha_U}\right).$$
Following the terminology of \cite{BOO}, we will refer to the pair $\left(\bigoplus_{U,\alpha_U> 0} U^{\alpha_U},\bigoplus_{U,\alpha_U <0} U^{-\alpha_U}\right)$ as the \emph{minimal $\Psi$-decomposition of $M$ with respect to $\cB$}. 

\begin{ex}\label{ex:bases}
\begin{enumerate}
\item The simple $\cP$-modules form a basis of $\bdim_\cP$. Moreover, since $\mod \cP$ has finite global dimension, the indecomposable projective $\cP$-modules also form an integer basis of $\bdim_\cP$.
\item Let $\cH \subseteq \Ind \cP$ denote the union of the set of hook modules and the set of indecomposable projectives (see Example~\ref{ex:intervals}). Then it is shown in \cite{BBH,BOO} that $\cH$ is a $\brk$-basis.         
\item It is shown in \cite{BOO} that the rectangle modules form another $\brk$-basis.
\end{enumerate}
\end{ex}

For use in Propositions~\ref{prop:basis_1} and~\ref{prop:basis_2} below, we recall the following well-known fact. This can also be taken as the definition of the Gabriel quiver containing no oriented cycles.

\begin{lem}
    Let $\cP$ be a finite poset and let $\cU$ a finite set of pairwise non-isomorphic indecomposable $\cP$-modules. Then the following are equivalent.
    \begin{enumerate}
        \item If there are nonzero morphisms
        $$X_1 \xrightarrow{f_1} X_2 \xrightarrow{f_2} \cdots \xrightarrow{f_{n-1}} X_n \xrightarrow{f_n} X_1$$
        with each $X_i \in \cU$, then $f_j$ is an isomorphism for all $j = 1,\ldots,n$.
        \item The Gabriel quiver of the algebra $\End_\cP(\bigoplus_{U \in \cU}U)$ contains no oriented cycles.
    \end{enumerate}
\end{lem}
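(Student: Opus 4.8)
The plan is to prove the equivalence of the two conditions by unpacking the definition of the Gabriel quiver of the finite-dimensional algebra $A = \End_\cP(\bigoplus_{U \in \cU} U)$ and relating paths in that quiver to compositions of morphisms between the $U$'s. Recall that since $\cP$ is representation-finite (being a finite poset — or more precisely, since we only need the $U \in \cU$ to be bricks, which follows from representation-directedness as in Proposition~\ref{prop::mult-dimhom}), each indecomposable $U \in \cU$ has $\End_\cP(U) \cong k$. Hence $A$ is a basic algebra, and by standard theory $A \cong kQ/J$ where $Q$ is the Gabriel quiver: the vertices of $Q$ are indexed by (isoclasses of) the $U \in \cU$, and the number of arrows $U \to V$ equals $\dim_k \mathrm{rad}_A(U,V)/\mathrm{rad}_A^2(U,V)$, where $\mathrm{rad}_A(U,V)$ is the space of non-isomorphisms $U \to V$ (here just all morphisms when $U \not\cong V$).

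First I would establish the implication (1) $\Rightarrow$ (2) by contraposition. Suppose $Q$ has an oriented cycle; then there is a cyclic sequence of arrows $U_1 \to U_2 \to \cdots \to U_n \to U_1$ in $Q$. Each arrow $U_i \to U_{i+1}$ corresponds to a morphism $g_i \colon U_i \to U_{i+1}$ that is nonzero modulo $\mathrm{rad}^2$, hence in particular nonzero; moreover each $g_i$ is a non-isomorphism since arrows in a Gabriel quiver never connect a vertex to itself via an iso (arrows lie in $\mathrm{rad}/\mathrm{rad}^2$). Composing around the cycle gives nonzero morphisms $U_1 \xrightarrow{g_1} \cdots \xrightarrow{g_{n-1}} U_n \xrightarrow{g_n} U_1$ — wait, one must be careful here: the composite of the arrows might lie in $\mathrm{rad}^n$ and could a priori be zero. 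The cleaner route is: since there is an arrow $U_i \to U_{i+1}$, the algebra $A$ has a nonzero non-isomorphism $U_i \to U_{i+1}$; but these individual morphisms, composed, need not be nonzero. So instead I would argue directly in terms of morphisms: condition (1) fails iff there exists a cyclic chain of nonzero non-isomorphisms, and I claim this is detected by $Q$ having a cycle. For the forward-safe direction, if (1) fails we have nonzero $f_i \colon X_i \to X_{i+1}$ (indices mod $n$) with not all $f_j$ isomorphisms; after cyclically rotating and deleting, we may assume no $f_j$ is an isomorphism (if some $f_j$ is an iso, compose it into its neighbor to shorten the cycle, using that the $X_i$ are bricks so an iso times a non-iso is a non-iso, maintaining nonzeroness of the composite as long as at least one genuine non-iso remains; iterate). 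Then each $f_j \in \mathrm{rad}_A(X_j, X_{j+1})$, and since $\mathrm{rad}_A$ is nilpotent (as $A$ is finite-dimensional), a sufficiently high power of the idempotent-truncated radical is zero, contradicting that we can keep composing a fixed cyclic word of non-isomorphisms to get something nonzero — more precisely, the composite $f_n \cdots f_1 \colon X_1 \to X_1$ is a non-isomorphism hence lies in $\mathrm{rad}\,\End_A(X_1) = 0$, so it is $0$; but reading the cycle starting at each vertex and using that partial composites are nonzero forces a contradiction with nilpotency only if... Here the honest statement is: (1) holds $\iff$ $Q$ is acyclic, and the right framing is that $Q$ acyclic $\iff$ $A$ is "directed" $\iff$ no cyclic chain of non-isomorphisms composing to something nonzero; I would cite the standard fact (e.g. \cite[Section~II.1]{BlueBook}-type reference or Assem–Simson–Skowroński) that for a basic finite-dimensional algebra, $\mathrm{rad}_A(X,Y)/\mathrm{rad}_A^2(X,Y)$ being spanned by arrows means every non-isomorphism factors through a sum of arrows, and then a cyclic chain of non-isos that is collectively nonzero yields, upon expanding each factor into arrow-paths, a nonzero oriented closed path in $kQ/J$, hence a closed walk in $Q$.

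For (2) $\Rightarrow$ (1): if $Q$ is acyclic, suppose we had nonzero $f_i \colon X_i \to X_{i+1}$, $i \in \bZ/n$, with some $f_j$ not an isomorphism. Each $f_i$, being a morphism in $A$, is a $k$-linear combination of paths in $Q$ of length $\geq 0$; the length-$0$ part is a scalar (present only if $X_i \cong X_{i+1}$, in which case that scalar part is an isomorphism or zero). Since some $f_j$ is a non-isomorphism, modulo lower-order considerations its path expansion involves a path of length $\geq 1$. Composing all $f_i$ around the cycle yields an element of $\mathrm{rad}\,\End_A(X_1)$ (it is a non-isomorphism $X_1 \to X_1$ because the collection contains a non-iso, using that the $X_i$ are bricks so composites of isos are isos and a composite involving a non-iso between bricks is a non-iso), and expanding it as a sum of closed paths in $Q$ based at $X_1$: acyclicity of $Q$ forces every closed path of positive length to be $0$ in $kQ/J$, and the length-$0$ contribution vanishes since the composite is a non-isomorphism — contradiction, since $f_n \cdots f_1 \neq 0$ as each $f_i \neq 0$ and... no, the composite of nonzero maps can be zero. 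So this direction also needs care: the correct claim uses that in a directed (= Gabriel quiver acyclic) algebra one can linearly order the vertices so that $\Hom_A(X,Y) = 0$ whenever $Y$ precedes $X$ and $X \not\cong Y$; this is precisely the statement that $A$ is a directed algebra, equivalent to $Q$ acyclic. Given such an order, a cyclic chain $X_1 \to \cdots \to X_n \to X_1$ of nonzero maps forces $X_1 \leq X_2 \leq \cdots \leq X_n \leq X_1$ in the order (where $X_i < X_{i+1}$ strictly whenever $f_i$ is a non-isomorphism), hence all inequalities are equalities and all $f_i$ are isomorphisms, proving (1).

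The main obstacle, therefore, is the careful bookkeeping around the fact that arrows in the Gabriel quiver correspond to irreducible/generating morphisms modulo $\mathrm{rad}^2$ rather than to arbitrary nonzero morphisms, and that composites of nonzero morphisms can vanish. The clean resolution — which I would adopt in the write-up — is to route both implications through the equivalent reformulation "$Q$ has no oriented cycle $\iff$ $A = \End_\cP(\bigoplus_{U\in\cU} U)$ admits a linear refinement $\leq$ of the relation [$U \leq V$ if $\Hom_\cP(U,V) \neq 0$] that is a partial order with $\Hom_\cP(U,V) = 0$ for $U \not\leq V$", i.e. $A$ is a \emph{directed} algebra. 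This is a standard equivalence for basic finite-dimensional algebras (and for our $\cU$ it is essentially the representation-directedness statement of \cite[Theorem~1.4]{simson} restricted to $\cU$, combined with the fact that the $U \in \cU$ are bricks), so I would state it, give the short argument above that a directed algebra has no cyclic chain of non-isomorphisms and conversely, and cite the standard reference for the translation between directedness of $A$ and acyclicity of its Gabriel quiver. I expect the whole proof to occupy under half a page once this reformulation is in place.
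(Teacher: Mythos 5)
The paper does not prove this lemma: it is introduced with ``we recall the following well-known fact. This can also be taken as the definition of the Gabriel quiver containing no oriented cycles,'' and no proof follows. So there is no paper-side argument to compare against; what matters is only whether your own argument is sound.

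Your opening claim, that ``$\cP$ is representation-finite (being a finite poset),'' is false --- finite posets are routinely representation-infinite (for instance, a poset consisting of one minimal element below a five-element antichain has the wild five-subspace star as its incidence algebra) --- and consequently you may not assume that the $U\in\cU$ are bricks, nor invoke representation-directedness of $\cP$ or Proposition~\ref{prop::mult-dimhom}, none of which are hypotheses of the lemma. Fortunately both assumptions are avoidable. The algebra $A=\End_\cP(\bigoplus_{U\in\cU}U)$ is basic simply because each $\End_\cP(U)$ is local, which holds for any indecomposable module over a finite-dimensional algebra regardless of representation type, so its Gabriel quiver $Q$ is well defined. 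For $(1)\Rightarrow(2)$ no brick property is needed: an oriented cycle $U_1\to U_2\to\cdots\to U_m\to U_1$ in $Q$ lifts, arrow by arrow, to a cyclic chain of nonzero elements of the radical $e_{U_{i+1}}(\mathrm{rad}\,A)e_{U_i}$, that is, nonzero non-isomorphisms, which directly contradicts (1). For $(2)\Rightarrow(1)$ the brick property of each $U\in\cU$ is a \emph{consequence} of $Q$ being acyclic (there is then no path $U\to U$ of positive length, so $e_UAe_U=k\cdot e_U$, i.e.\ $\End_\cP(U)\cong k$), after which your ``directedness'' reformulation closes cleanly: a topological ordering of the acyclic $Q$ gives a linear order on $\cU$ with the property that $\Hom_\cP(U,V)\neq 0$ and $U\neq V$ imply $U<V$, so a cyclic chain of nonzero maps forces all $X_i$ to be equal, and then each $f_i$ is a nonzero element of $\End_\cP(X_1)\cong k$, hence an isomorphism. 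I would also remove the several false starts in the middle of your proposal; they correctly flag the pitfall that composites of nonzero morphisms can vanish, but the finished write-up should contain only the final argument.
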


\begin{prop}\label{prop:basis_1}Let $\cX$ be a representation-finite poset and $\cP$ be a finite poset. Recall that $\rI=\Ind\cX$ and fix $\rE$ a subset of $\Emb_{\cX}^{\cP}$. Assume that $\bigcup_{f\in \rE} \Im f=\cP$ and that the Gabriel quiver of the endomorphism algebra $\End_{\cP}(\bigoplus_{(f,U)\in \rE\times \rI/\sim_!}f_!U)$ has no oriented cycle. Then the set $ \cH^\rE:=\{f_!U \mid (f,U)\in \rE\times \rI/{\sim_!}\}$ is a $\bdh_{\cX,\cP}^{\rE}$-basis (and hence a $\bm_{\cX,\cP}^{\rE}$-basis). 
\end{prop}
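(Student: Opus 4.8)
The plan is to show that the set $\cH_\rE = \{f_!U \mid (f,U) \in \rE \times \rI/{\sim_!}\}$, which by Proposition~\ref{prop::relative-proj} equals $\proj(\cE_\cP^\rE)$ and consists of pairwise non-isomorphic bricks, forms a basis for $\bdh_{\cX,\cP}^\rE$. By Remark~\ref{rem:dimhom}, the invariant $\bdh_{\cX,\cP}^\rE$ is equivalent to $\bdh_\cP^{\cH^\rE}$, so it suffices to prove that $\cH_\rE$ is a $\bdh_\cP^{\cH_\rE}$-basis. In other words, writing $\cH_\rE = \{W_1,\ldots,W_m\}$, I must show that the Gram-type matrix $G$ with entries $G_{i,j} = \dim_k \Hom_\cP(W_i,W_j)$ induces an isomorphism from the free abelian group $\langle [W_1],\ldots,[W_m]\rangle$ onto $\Im \bdh_\cP^{\cH_\rE}$. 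Since $\bdh_\cP^{\cH_\rE}([W_j])$ is precisely the $j$-th column of $G$, and since these columns span $\Im \bdh_\cP^{\cH_\rE}$ by definition, the content of the statement reduces to showing that $G$ is invertible over $\bZ$, i.e. $\det G = \pm 1$.

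**Key steps.** First, I would record that each $W_i = f_!U$ is a brick: by the adjunction $\End_\cP(f_!U) \cong \End_\cX(U) \cong k$ (using $f^*f_! = \mathrm{Id}$ and that $\cX$ is representation-finite, hence representation-directed, so all its indecomposables are bricks), exactly as in the proof of Proposition~\ref{prop::relative-proj}. Hence $G_{i,i} = 1$ for all $i$. Second, the no-oriented-cycle hypothesis on the Gabriel quiver of $\End_\cP(\bigoplus_i W_i)$ gives, via the preceding Lemma, that there is no cyclic chain of nonzero non-isomorphisms among the $W_i$; equivalently, the relation $W_i \preceq W_j$ whenever $\Hom_\cP(W_i,W_j) \neq 0$ generates a partial order on $\{W_1,\ldots,W_m\}$. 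Third, I would reorder the index set so that this partial order is refined by the linear order $1 < 2 < \cdots < m$; then $G_{i,j} = 0$ unless $W_i \preceq W_j$, so $G$ is upper-triangular with $1$'s on the diagonal, whence $\det G = 1$ and $G \in \mathrm{GL}_m(\bZ)$. (One could equivalently invoke Lemma~\ref{lemma:basisPoset} directly with $Y$ the poset $(\{W_i\},\preceq)$ and $\varphi_{W_j}(W_i) = G_{i,j}$, reading off that the columns of $G$ form a $\bZ$-basis of $\bZ^Y$.) Finally, invertibility of $G$ means exactly that $[W_1],\ldots,[W_m]$ map to a $\bZ$-basis of the image lattice, i.e. $\cH_\rE$ is a $\bdh_\cP^{\cH_\rE}$-basis; pulling back along the equivalence of Remark~\ref{rem:dimhom} gives that it is a $\bdh_{\cX,\cP}^\rE$-basis, and hence by Proposition~\ref{prop::mult_dimhom_invariants} a $\bm_{\cX,\cP}^\rE$-basis.

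**Main obstacle.** The only subtle point is the bookkeeping around the equivalence relation $\sim_!$: one must be careful that passing to $\rE \times \rI/{\sim_!}$ produces a set of genuinely pairwise non-isomorphic modules (so that $\cH_\rE$ is honestly indexed and the matrix $G$ is square of the right size), and that the image lattice $\Im \bdh_{\cX,\cP}^\rE \subseteq \bZ^{\rE\times\rI}$ is identified correctly with $\Im \bdh_\cP^{\cH_\rE} \subseteq \bZ^{\cH_\rE}$ under the map $\Phi$ appearing in the commutative diagram following Remark~\ref{rem:dimhom} — i.e. that the projection $\bZ^{\rE\times\rI} \to \bZ^{\rE\times\rI/\sim_!}$ restricts to an isomorphism on images. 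Everything else is the standard triangularity argument already packaged in Lemma~\ref{lemma:basisPoset}, so I expect no real difficulty there; the work is in setting up the reduction cleanly.
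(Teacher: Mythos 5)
Your proof is correct and follows essentially the same route as the paper: compute the Hom-dimensions $\dim\Hom_\cP(g_!V,f_!U)$ via the adjunction (equivalently, pass to $\bdh_\cP^{\cH^\rE}$ by Remark~\ref{rem:dimhom}), use that each $f_!U$ is a brick to get $1$'s on the diagonal, use the acyclicity hypothesis to obtain a partial order, and conclude by the triangularity argument packaged in Lemma~\ref{lemma:basisPoset}. The only cosmetic quibble is the phrase ``these columns span $\Im\bdh_\cP^{\cH_\rE}$ by definition'' --- spanning is not definitional but is a consequence once $G$ is shown to lie in $\mathrm{GL}_m(\bZ)$; as you already observe, invertibility of $G$ alone forces $\Im\bdh_\cP^{\cH_\rE}=\bZ^m$ and hence gives the basis property, so this does not affect the argument.
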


\begin{proof}
The proof is a consequence of Theorem 4.22 in \cite{BBH}. We give an alternative proof here for the convenience of the reader.
For $(f,U)$ and $(g,V)$ in $\rE\times \rI/\sim_!$ :
$$\left(\bdh_{\cX,\cP}^{\rE}(f_!U)\right)_{(g,V)}= \dim \Hom_{\cP}(g_!V,f_!U).$$
Hence we have $\left(\bdh_{\cX,\cP}^{\rE}(f_!U)\right)_{(f,U)}=1$ and $\left(\bdh_{\cX,\cP}^{\rE}(f_!U)\right)_{(g,V)}\neq 0$ implies $(g,V)\leq (f,U)$ in the partial order given by the Gabriel quiver of $\End_{\cP}(\bigoplus_{(f,U)\in E\times I/\sim_!}f_!U)$. We conclude using Lemma \ref{lemma:basisPoset}.
\end{proof}

 \begin{prop}\label{prop:basis_2} Let $\cP$ be a connected poset, and $\cX$ be one of the following posets

 \[
  \scalebox{0.8}{
  \begin{tikzpicture}
  
\begin{scope}[xshift=4cm]

\node(x) at (0,0) {$\cX'_3=$};
\node (A) at (1,0) {$1$};
\node (B) at (2,0.5) {$2$};
\node (C) at (2,-0.5) {$3$};
\draw[->] (A)--(B);
\draw[->] (A)--(C);
\end{scope}

\begin{scope}[xshift=8cm]
\node(x) at (0,0) {$\cX''_3=$};
\node (A) at (1,0.5) {$1$};
\node (B) at (1,-0.5) {$2$};
\node (C) at (2,0) {$3$};
\draw[->] (A)--(C);
\draw[->] (B)--(C);
\end{scope}
 \end{tikzpicture}}\]
Let $\rE$ be a subset of $\Emb_{\cX}^{\cP}$ such that  $\bigcup_{f\in E} \Im f=\cP$. Then the set 
$\cH_\rE=\{ f_!U \mid (f,U)\in \rE\times \rI/\sim_!\}$ forms a $\bdh^\rE_{\cX,\cP}$-basis, and hence also a  $\bm^\rE_{\cX,\cP}$-basis. 

\end{prop}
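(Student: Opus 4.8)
The plan is to apply Proposition~\ref{prop:basis_1} directly, so the only task is to verify its hypotheses for $\cX = \cX'_3$ and $\cX = \cX''_3$. The hypothesis $\bigcup_{f \in \rE}\Im f = \cP$ is assumed, so what remains is to check that the Gabriel quiver of $\End_\cP\big(\bigoplus_{(f,U) \in \rE \times \rI/\sim_!} f_!U\big)$ has no oriented cycle. By the lemma preceding Proposition~\ref{prop:basis_1}, this is equivalent to showing: whenever there is a chain of nonzero morphisms $X_1 \to X_2 \to \cdots \to X_n \to X_1$ among the modules $f_!U$, every map in the chain is an isomorphism. Since each $f_!U$ is a brick (as noted in the proof of Proposition~\ref{prop::relative-proj}, because $\cX$ is representation-finite), a nonzero endomorphism of any $f_!U$ is automatically an isomorphism; so it suffices to rule out oriented cycles passing through two or more pairwise non-isomorphic modules in the list.

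First I would enumerate the indecomposable $\cX$-modules and compute the modules $f_!U$ explicitly, which is exactly the content of Example~\ref{ex::f!U modules}(2) and (3): for $\cX'_3$ these are $P_{f1}, P_{f2}, P_{f3}, H_{f1,f2}, H_{f1,f3}$, and $M_f := \Coker(P_{f2}\oplus P_{f3}\to P_{f1})$, all interval modules with explicitly described supports; for $\cX''_3$ these are $P_{f1}, P_{f2}, P_{f3}, C$-type hooks (the cokernels $\Coker(P_{f3}\to P_{f1})$ and $\Coker(P_{f3}\to P_{f2})$), and the non-interval module $N_f := f_!\mathbb{I}_{\cX''_3} = \Coker(P_{f3}\to P_{f1}\oplus P_{f2})$ whose support has a region of dimension $2$. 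The key structural observation is that all these modules are (generalized) hook-type: each is a quotient of a projective $P_{fx}$ by a submodule generated in strictly higher degrees, so there is a well-defined "bottom vertex" $f(x)$ below which the module is supported. A nonzero morphism $f_!U \to g_!V$ forces (by looking at where the top of $f_!U$ can map) the bottom vertex of $g_!V$ to be $\leq$ the bottom vertex of $f_!U$ in $\cP$; if additionally the morphism is not an isomorphism, I claim the inequality is strict, or the supports differ in a way that is preserved. Composing around a cycle then yields a cycle of $\leq$'s in $\cP$ that must collapse, and one checks in the few remaining boundary cases (same bottom vertex, different module) that no nonzero map exists in one of the two directions, breaking the cycle.

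More precisely, the argument I would run is a partial-order / size argument on the pairs $(f,U)$: define a preorder by declaring $(g,V) \preceq (f,U)$ if $\Hom_\cP(f_!U, g_!V) \neq 0$; by the brick property this restricts to a genuine partial order on isomorphism classes once we show antisymmetry, which is exactly the no-oriented-cycle condition. To establish antisymmetry, I would show that $(g,V) \preceq (f,U)$ implies either $f_!U \cong g_!V$ or $\dim_k \operatorname{Hom}_\cP(f_!U, g_!V) < \dim_k \operatorname{End}_\cP(f_!U)$ together with a strict decrease of some numerical invariant --- the natural candidate being $\dim_k(g_!V) < \dim_k (f_!U)$, or the pair $(\text{bottom vertex position}, \dim_k)$ ordered lexicographically. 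For the interval modules arising from $\cX'_3$ this is a direct support comparison (a nonzero map between interval modules is surjective onto a subinterval, etc., as in the standard analysis of $A_n$-type representations); for $\cX''_3$ the only subtlety is the non-interval module $N_f$ with its two-dimensional region, but $N_f$ still has simple top $S_{f1}$, so $\operatorname{Hom}_\cP(N_f, -)$ behaves like $\operatorname{Hom}$ from a hook with bottom $f(1)$, and one checks $\operatorname{Hom}_\cP(N_f, g_!V) \neq 0 \Rightarrow f(1) \geq (\text{bottom of } g_!V)$ with equality only when $g_!V = N_g$ and $f = g$.

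The main obstacle I expect is the bookkeeping in the $\cX''_3$ case: because $N_f$ is not an interval module, one cannot quote off-the-shelf facts about morphisms between interval/spread modules, and one has to argue directly with the explicit description of $N_f$ given in Example~\ref{ex::f!U modules}(3) --- in particular that $\operatorname{End}_\cP(N_f) \cong k$ (already known, since $N_f$ is a brick) and that there is no nonzero map $N_f \to N_g$ unless $N_f \cong N_g$, which amounts to checking that a map of $\cP$-modules compatible with the $k^2$-region forces $f(2) = g(2)$, $f(3) = g(3)$, and then $f(1) = g(1)$. A clean way to finish, avoiding case analysis, is to observe that each $f_!U$ has a simple top $S_{f(x_0)}$ for a well-defined minimal vertex $x_0$, so a nonzero morphism $f_!U \to g_!V$ exists only if $(g_!V)_{f(x_0)} \neq 0$, forcing $g(y_0) \leq f(x_0)$ for the bottom vertex $g(y_0)$ of $g_!V$; following this around a cycle gives $f_1(x_0^{(1)}) \geq f_2(x_0^{(2)}) \geq \cdots \geq f_1(x_0^{(1)})$, so all bottom vertices coincide, and with a common bottom vertex $p \in \cP$ there are only finitely many modules in our list supported with top $S_p$ --- at most three for $\cX'_3$ (namely $P_p$, and two hooks, plus $M$) and similarly for $\cX''_3$ --- among which a quick check shows the $\Hom$-quiver is totally ordered by inclusion of supports, hence acyclic. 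This reduces the whole verification to a finite, transparent check, and then Proposition~\ref{prop:basis_1} applies verbatim.
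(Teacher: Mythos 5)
Your plan for $\cX'_3$ is sound and matches the paper's approach: there each $f_!U$ is an interval module with indecomposable projective cover (a ``single-source spread'' in the terminology of [BBH]), so the simple top $S_{f(x_0)}$ gives you the bottom vertex you need, and the ``trace the bottom vertex around the cycle'' argument works. For $\cX''_3$, however, the proposal has a genuine gap: you assert that $N_f := f_!\mathbb{I}_{\cX''_3} = \Coker(P_{f3}\to P_{f1}\oplus P_{f2})$ still has simple top $S_{f1}$. This is false. Since $f1$ and $f2$ are incomparable and the image of $P_{f3}$ lies in the radical, the projective cover of $N_f$ is $P_{f1}\oplus P_{f2}$, so $\mathrm{top}(N_f)\cong S_{f1}\oplus S_{f2}$ and the support of $N_f$ has two minimal elements $f1$ and $f2$. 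Consequently the embedding $\Hom_\cP(N_f, g_!V)\hookrightarrow (g_!V)_{f1}\oplus (g_!V)_{f2}$ only tells you that $g_!V$ is nonzero at $f1$ \emph{or} at $f2$; there is no single ``bottom vertex'' of $N_f$, and the cyclic chain of inequalities $f_1(x_0^{(1)})\geq f_2(x_0^{(2)})\geq\cdots$ does not even get set up once a generalized hook $N_f$ appears in the cycle --- which is exactly the hard case you need to rule out.

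The paper avoids this by tracking a different datum: the vertex sitting in the \emph{kernel} slot of the two-term projective presentation. Writing each generalized hook as $X_i = \Coker(P_{a_i}\to P_{b_i}\oplus P_{c_i})$ with $b_i, c_i < a_i$ incomparable, a nonzero map $X_i\to X_{i+1}$ lifts to a commutative square of projective presentations; one then first shows $a_i\geq a_{i+1}$ around the cycle (hence all $a_i$ coincide) and only afterwards deduces $\{b_i,c_i\}=\{b_1,c_1\}$. This step requires a careful component analysis of the $2\times 2$ matrix $P_{b_i}\oplus P_{c_i}\to P_{b_{i+1}}\oplus P_{c_{i+1}}$, precisely because there is no simple top to lean on. If you want to rescue your outline, you would have to replace ``bottom vertex'' by some other numerical or positional datum that remains monotone under nonzero maps out of a module with a non-simple top; the kernel vertex $a_i$ is the datum the paper uses for this purpose.

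Two minor remarks: the cokernels $\Coker(P_{f3}\to P_{f1})$ and $\Coker(P_{f3}\to P_{f2})$ you list for $\cX''_3$ are hook modules $H_{f1,f3}$ and $H_{f2,f3}$ (cokernels of maps between projectives), not cohooks, so the ``$C$-type'' label is misleading; and the claim that each $f_!U$ ``has a well-defined bottom vertex'' cannot be stated uniformly across both posets even as a heuristic, since it is the essential point on which the two cases diverge.
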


\begin{proof}
For $\cX= \cX'_3$ we can use Proposition 5.2 of \cite{BBH} to see  that the objects $f_!U$ are ``single-source spreads'' since they have an indecomposable projective cover. Then it follows from Lemma 5.9 in \cite{BBH} that there is no oriented cycle in the Gabriel quiver of $\End_{\cP}(\bigoplus_{(f,U)\in \rE\times \rI/\sim_!}f_!U)$. 

For $\cX= \cX''_3$, we prove directly that there are no cycle in the quiver of $\End_{\cP}(\bigoplus_{(f,U)\in \rE\times \rI/\sim_!}f_!U)$. We show that if there are nonzero morphisms 
$$X_1\to X_2\to \cdots\to X_n\to X_1 $$ with $X_i$ of the form $f_!U$ for $(f,U)\in \rE\times \rI$ then $X_j\simeq X_1$ for any $j=1,\ldots,n$. Since the modules $f_!U$ are bricks this ends the proof. There are three types of modules of the form 
$f_!U$: the projective, the hooks and the modules of the form $\Coker(P_a\to P_b\oplus P_c)$ with $b$ and $c$ incomparable and $b,c<a$ that we call generalized hooks. First note that there is no nonzero morphism from a hook or a generalized hook to a projective. Since we already know the claim holds if each $X_i$ is projective, we may therefore assume that the $X_i$ are either hooks or generalized hooks. We assume here that all the modules $X_i= \Coker (P_{a_i}\to P_{b_i}\oplus P_{c_i})$ are generalized hooks, the proof easily generalizes when some of them are hooks. 

A nonzero morphism $X_i\to X_{i+1}$ induces a commutative square 
$$\xymatrix{P_{a_i}\ar[r]\ar[d] & P_{b_i}\oplus P_{c_i}\ar[d] \\ P_{a_{i+1} }\ar[r] & P_{b_{i+1}}\oplus P_{c_{i+1}}}$$
The second vertical map is nonzero, hence we may assume $b_i\geq b_{i+1}$. Then we obtain $a\geq b_{i+1}$ and by composing this map with the projection $P_{b_{i+1}}\oplus P_{c_{i+1}}\to P_{b_i+1}$ we obtain that the map $P_{a_i}\to P_{a_{i+1}}$ is nonzero which implies that $a_i\geq a_{i+1}$. Therefore we have $a_i=a_1$ for all $i=1,\ldots, n$. Now we obtain nonzero morphisms 
$$ P_{b_1}\oplus P_{c_1}\to P_{b_2}\oplus P_{c_2}\to \ldots \to P_{b_n}\oplus P_{c_n}\to P_{b_1}\oplus P_{c_1}.$$ This composition is nonzero since the composition of these maps with $P_{a_1}\to P_{b_1}\oplus P_{c_1}$ is nonzero. Therefore we have a chain $\alpha_1\geq \alpha_2\geq \ldots \geq \alpha_n\geq \alpha_1$ with $\alpha_i=b_i$ or $\alpha_i=c_i$. Furthermore in the commutative square above, it is easy to see that if $a_i=a_{i+1}$ and $b_i=b_{i+1}$, then we should have $c_i\geq c_{i+1}$, since $b_i$ and $c_i$ are incomparable by hypothesis. Hence we deduce for each $i=1,\ldots,n$ we have $\{b_i,c_i\}=\{b_1,c_1\}$. Thus all the $X_i$'s are isomorphic. This ends the proof.
\end{proof}


\section{Interval bases via families of embeddings}\label{sec:families}

 In this section, we first reinterpret classical bases for invariants, such as the simple modules for the dimension vector and the rectangle modules for the rank invariant, in terms of the intermediate extension functor. This provides a conceptual explanation for their appearance.

We then develop a more general framework based on families of embeddings of representation-finite posets, which allows us to systematically construct bases for invariants while avoiding redundancy. In particular, by restricting to suitable indecomposable modules associated to each embedding, we obtain bases consisting entirely of interval modules. This approach leads to explicit and computable descriptions of invariants, as well as new bases generalizing the rectangle modules.

\subsection{Interval modules and the intermediate extension functor}
It is well known that the simple modules form a basis for the dimension vector, and that the rectangle modules form a basis for the rank invariant. While the latter was established in  \cite{BOO} by an explicit computation, the underlying conceptual reason for the appearance of rectangle modules has remained less clear.

Our aim is to provide a unified explanation of these phenomena using the intermediate extension functor introduced in \cite{AET}. This functor allows us to associate to each embedding $f: \cX \rightarrow \cP$ and each indecomposable $\cX$-module $U$ a canonical $\cP$-module $\Theta_fU$, which behaves well with respect to interval structures. In particular, we show that both simple modules and rectangle modules arise naturally from this construction, corresponding to embeddings of the posets ${\mathcal X}_1$
 and ${\mathcal X}_2$, respectively.

We begin with an example illustrating that the induction and coinduction functors generally do not send interval modules to interval modules.

\begin{ex}\label{ex:not_interval}
    Consider the posets
    $$\begin{tikzcd}
        (3,1) \arrow[r] & (3,2) \arrow[r] & (3,3)&&& c&\\
        (2,1) \arrow[r]\arrow[u] & (2,2) \arrow[r]\arrow[u] & (2,3)\arrow[u]&&a\arrow[rr]\arrow[ur]&&d&\\
        (1,1) \arrow[r]\arrow[u] & (1,2) \arrow[r]\arrow[u] & (1,3)\arrow[u]&&&b\arrow[uu,crossing over]\arrow[ur]&\\[-1em]
        &\cP&&&&\cX
    \end{tikzcd}$$
    Let $f: \cX \rightarrow \cP$ be the poset embedding given by $fa = (2,1)$, $fb = (1,2)$, $fc = (3,2)$, and $fd = (2,3)$. Consider the sincere interval module $\mathbb{I}_{[\cX]} \in \mods \cX$. Since the induction functor $f_!$ preserves projective presentations and the coinduction functor $f_*$ preserves injective copresentations, we then have
    $$\begin{tikzcd}
        K \arrow[r,"1"] & K \arrow[r] & 0&& K \arrow[r,"1"] & K\arrow[r] & 0\\
        K \arrow[r,"\iota_1"]\arrow[u,"1"] & K^2 \arrow[r,"\nabla"]\arrow[u,"\nabla"] & K\arrow[u]&& K \arrow[u,"1"]\arrow[r,"\Delta"] & K^2\arrow[r,"\mathrm{pr}_1"]\arrow[u,"\mathrm{pr}_2"] & K\arrow[u]\\
        0 \arrow[r]\arrow[u] & K \arrow[r,"1"]\arrow[u,"\iota_2"] & K\arrow[u,"1"]&&0\arrow[u]\arrow[r] & K \arrow[u,"\Delta"] \arrow[r,"1"] & K\arrow[u,"1"]\\[-1em]
        &f_!\mathbb{I}_{[\cX]}&&&&f_*\mathbb{I}_{[\cX]}
    \end{tikzcd}$$
    where $\iota_i$ denotes the $i$-th inclusion map, $\mathrm{pr}_i$ denotes the $i$-th projection map, and $\Delta$ and $\nabla$ denote the diagonal and co-diagonal maps, respectively. We note that both $f_!\mathbb{I}_{[\cX]}$ and $f_*\mathbb{I}_{[\cX]}$ are indecomposable but are not interval modules. Moreover, we have that $\Hom_\cP(f_!\mathbb{I}_{[\cX]},f_*\mathbb{I}_{[\cX]}) \cong K$ and that the image of any nonzero map $f_!\mathbb{I}_{[\cX]} \rightarrow f_*\mathbb{I}_{[\cX]}$ is precisely $\mathbb{I}_S$ for $S = \cP \backslash \{ (1,1), (3,3) \}$.
\end{ex}

It is shown in \cite[Proposition~4.16]{AET} that the final observation in Example~\ref{ex:not_interval} is not a coincidence. In order to state the general result from \cite{AET}, we need the following definitions.

\begin{defn}
    Let $S \subseteq \cP$. The \emph{convex hull} of $S$ in $\cP$ is the set
    $$\overline{S} = \{x \in \cP \mid \exists y, z \in S \mid y \leq x \leq z\}.$$
\end{defn}

\begin{defn}
    Let $f: \cX \rightarrow \cP$ be a poset embedding and let $U \in \mods \cX$. By adjunction and the fact that $f^*f_* = \mathrm{Id}$, there is a natural isomorphism
    $\Hom_\cP(f_! U, f_*U) \cong \End_{\cX}(U)$. Let $\theta_fU: f_! U \rightarrow f_*U$ denote the image of the identity map $1_U \in \End_{\cX}(U)$ under this isomorphism. We denote $\Theta_fU = \mathrm{Im}(\theta_fU)$.
\end{defn}

\begin{rem}
    \begin{enumerate}
        \item As discussed in \cite[Section~4.2]{AET}, $\Theta_f$ is the so-called ``intermediate extension functor''. We refer to \cite{AET} for further background information and historical context. Note that we will not use the functoriality of $\Theta_f$ in this paper.
        \item Suppose that $U \in \mods \cX$ is an interval module. Then $\dim_K \End_\cX(U) \cong K$ by 
        \cite[Proposition~14]{DX} or \cite[Proposition~5.5]{BBH}. Thus every morphism $f_!U \rightarrow f_* U$ is a scalar multiple of $\theta_fU$ in this case.
    \end{enumerate}
\end{rem}

We now recall the following, which shows that the functor $\Theta_f$ preserves interval modules.

\begin{prop}\cite[Proposition~4.16]{AET}\label{prop:interval_functor}
    Let $f: \cX \rightarrow \cP$ be a poset embedding and $\mathbb{I}_S \in \mods \cX$ an interval module. Then $\Theta_f(\mathbb{I}_S) \cong \mathbb{I}_{\overline{f(S)}}$.
\end{prop}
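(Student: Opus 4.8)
The plan is to compute $\Theta_f(\mathbb{I}_S)$ explicitly as a submodule of $f_*\mathbb{I}_S$ and identify it with $\mathbb{I}_{\overline{fS}}$. First I would give pointwise descriptions of the two relevant $\cP$-modules. Using that $f_!$ is the left Kan extension of $\mathbb{I}_S$ along $f$, one has $(f_!\mathbb{I}_S)_p = \varinjlim_{x \in \cX,\, fx \leq p} (\mathbb{I}_S)_x$, which is nonzero exactly when there exists $x \in S$ with $fx \leq p$, and dually $(f_*\mathbb{I}_S)_p = \varprojlim_{x \in \cX,\, fx \geq p}(\mathbb{I}_S)_x$, which is nonzero exactly when there exists $x \in S$ with $fx \geq p$. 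The natural transformation $\theta_f\mathbb{I}_S$, being the image of the identity under $\Hom_\cP(f_!\mathbb{I}_S, f_*\mathbb{I}_S) \cong \End_\cX(\mathbb{I}_S) \cong K$, restricts along $f$ to the identity of $\mathbb{I}_S$; so on the subposet $f(\cX)$ it is an isomorphism on the support of $\mathbb{I}_S$. The key structural point is that $(\theta_f\mathbb{I}_S)_p \neq 0$ precisely when the canonical composite factors through a vertex in $f(S)$, i.e. precisely when there exist $y, z \in S$ with $fy \leq p \leq fz$ — which is exactly the condition $p \in \overline{fS}$.

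The main steps in order: (1) establish the pointwise formulas for $f_!\mathbb{I}_S$ and $f_*\mathbb{I}_S$ above, together with the description of their transition maps; (2) identify $\theta_f\mathbb{I}_S$ concretely — on a vertex $p$, pick $y \in S$ minimal-ish with $fy \leq p$ and $z \in S$ with $fz \geq p$, and observe that $(\theta_f\mathbb{I}_S)_p$ is the composite $(f_!\mathbb{I}_S)_p \to (\mathbb{I}_S)_y \xrightarrow{\mathrm{id}} (\mathbb{I}_S)_z \to (f_*\mathbb{I}_S)_p$ read off from the colimit/limit cones and the fact that $f^*\theta_f\mathbb{I}_S = \mathrm{id}_{\mathbb{I}_S}$; (3) deduce that $(\Theta_f\mathbb{I}_S)_p = \Im(\theta_f\mathbb{I}_S)_p$ is one-dimensional when $p \in \overline{fS}$ and zero otherwise, and that the induced transition maps on $\Theta_f\mathbb{I}_S$ are identities between the nonzero spaces (since they are squeezed between the maps of $f_!\mathbb{I}_S$ and $f_*\mathbb{I}_S$, both of which are identities on the relevant strata, together with the connectedness of $S$ which propagates the identification); (4) conclude $\Theta_f\mathbb{I}_S \cong \mathbb{I}_{\overline{fS}}$, noting along the way that $\overline{fS}$ is indeed an interval — convexity is immediate from its definition, and connectedness follows from connectedness of $S$ and the fact that $f$ preserves the alternating-chain relation, with the new "hull" points $fy \leq p \leq fz$ attached via chains to $fy, fz \in fS$.

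I expect the main obstacle to be step (2): pinning down $\theta_f\mathbb{I}_S$ precisely enough at an arbitrary vertex $p \in \cP$, since a priori it is only characterized abstractly via the adjunction isomorphism, and one must translate "image of the identity" into an explicit map between a colimit and a limit. The cleanest route is probably to use naturality: $f^*(\theta_f\mathbb{I}_S) = \mathrm{id}_{\mathbb{I}_S}$ forces the component at each $fx$ (for $x \in S$) to be the identity of $K$, and then for general $p$ one uses that $(f_!\mathbb{I}_S)_p \to (f_!\mathbb{I}_S)_{fz}$ (when such $z \geq$ exists in a suitable sense) together with the corestriction maps pins down the component at $p$ up to the global scalar, which is $1$. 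Alternatively one can cite the general machinery of \cite{AET} more heavily; but since Proposition~\ref{prop:interval_functor} is quoted directly from \cite[Proposition~4.17]{AET}, the honest statement is that the proof is simply a reference, and the sketch above is how one would reconstruct it. The remaining steps (1), (3), (4) are routine bookkeeping with Kan extensions and interval modules, using \cite[Proposition~2.2]{BL2} for indecomposability of the interval module $\mathbb{I}_{\overline{fS}}$ if needed.
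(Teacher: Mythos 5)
The paper does not actually prove this statement — it is imported verbatim as a citation to \cite[Proposition~4.17]{AET}, as you correctly note at the end of your sketch. So there is no internal proof to compare against; your proposal is a reconstruction of what a proof in \cite{AET} might look like.

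The reconstruction has the right skeleton (pointwise Kan extension formulas, characterization of the support of $\theta_f\mathbb{I}_S$ as $\overline{fS}$, passage to the image), but it glosses over the genuine crux. You assert in step (3) that $(\Theta_f\mathbb{I}_S)_p$ is one-dimensional for $p \in \overline{fS}$, calling this a deduction, and you say the transition maps of $f_!\mathbb{I}_S$ and $f_*\mathbb{I}_S$ are ``identities on the relevant strata.'' Neither is automatic: as the paper's own Example~\ref{ex:not_interval} shows, $(f_!\mathbb{I}_S)_p$ and $(f_*\mathbb{I}_S)_p$ can have dimension $2$ (the colimit and limit over the comma fibers pick up one copy of $K$ per connected component of $S \cap f^{-1}(\leq p)$, resp.\ $S \cap f^{-1}(\geq p)$), so they are not interval modules and their transition maps are not all isomorphisms between lines. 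What makes the image one-dimensional is the following: for each generator $e_x$ of the colimit (one per component, each represented by some $x\in S$ with $fx \leq p$), naturality of $\theta$ and the identity $f^*\theta = \mathrm{id}_{\mathbb{I}_S}$ force $\theta_p(e_x)$ to equal the restriction to $p$ of $1 \in (f_*\mathbb{I}_S)_{fx}$, which is the ``all-ones'' compatible family $(c_z)_{z \in S,\, fz\geq p}$ with $c_z = 1$; this family is independent of $x$, and its being a well-defined element of the limit uses convexity of $S$ (otherwise one could have $z \le z'$ with $z \notin S$, $z' \in S$, breaking compatibility — this is ruled out since $x \le z \le z'$ with $x, z' \in S$). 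So all generators map to the same element, giving $\dim \Im\theta_p = 1$. This use of convexity of $S$ is separate from (and in addition to) the one you mention for showing $\overline{fS}$ is an interval, and without it the conclusion simply fails. Once this point is supplied, the rest of your steps (1), (2), (4) are sound.
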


To make this perspective more concrete, we now examine how the intermediate extension functor behaves in low-dimensional cases. In particular, we compare it with the induction and coinduction functors associated to embeddings $f: \cX \rightarrow \cP$.

\begin{ex}\label{ex:hooks_rects}
\begin{enumerate}
\item 
    We consider embeddings $f: \cX_2 \rightarrow \cP$, 
   where $\cX_2 = \{1\rightarrow 2\}$. Recall from Example~\ref{Ex::mult2} that the three indecomposable $\cX_2$-modules are $\bI_{\{1,2\}} = P_1 = I_2$, $\bI_{\{1\}} = H_{1,2} = I_a$, and $\bI_{\{2\}} = P_2 = C_{1,2}$. For $f \in \Emb_\cX^\cP$, one computes

    \begin{multicols}{3}
   \mbox{}
        \vspace{-30pt}
    \begin{eqnarray*}
        f_!\mathbb{I}_{\{1\}} &=& H_{f1,f2},\\
        f_*\mathbb{I}_{\{1\}} &=& I_{f1},\\
        \Theta_f\mathbb{I}_{\{1\}} &=& \mathbb{I}_{\{f1\}},
    \end{eqnarray*}
    \columnbreak
    \begin{eqnarray*}
        f_!\mathbb{I}_{\{2\}} &=& P_{f2},\\
        f_*\mathbb{I}_{\{2\}} &=& C_{f1,f2},\\
        \Theta_f\mathbb{I}_{\{2\}} &=& \mathbb{I}_{\{f2\}},\\
    \end{eqnarray*}
    \columnbreak
    \begin{eqnarray*}
        f_!\mathbb{I}_{\{1,2\}} &=& P_{f1},\\
        f_*\mathbb{I}_{\{1,2\}} &=& I_{f2},\\
        \Theta_f\mathbb{I}_{\{1,2\}} &=& \mathbb{I}_{[f1,f2]}.
    \end{eqnarray*}
    \end{multicols}
    \vspace{-30pt}
    \noindent In particular, we obtain that $\{\Theta_fU \mid f \in \Emb_\cX^\cP, U \in \Ind \cX\}$ is the set of rectangle modules. Note that there is some redundancy in this collection: for $x \in \cP$ we obtain the (simple) rectangle module $\mathbb{I}_{\{x\}}$ once for every embedding $f: \cX_2 \rightarrow \cP$ with either $x = f(1)$ or $x = f(2)$.

    It follows from \cite[Corollary~2.7]{BOO} that $\{\Theta_fU \mid f \in \Emb_\cX^\cP, U \in \Ind \cX\}$ is a $\brk_\cP$-basis, hence also a $\bm_{\cX_2,\cP}$ and $\bdh_{\cX_2,\cP}$-basis. 
    
    Note that we can define an equivalence relation $\sim_{\Theta}$ on $\rE\times \rI$ by setting $$(f,U)\sim_{\Theta} (g,V) \quad \Leftrightarrow \quad \Theta_f(U)=\Theta_g(V),$$ but the invariant $\bm_{\cX_2,\cP}$ does not factor through $\bZ^{\rE\times \rI/\sim_{\Theta}}$. For example, if $\cP=\{a\to b\to c\}$, then we have $\Theta_{f}\mathbb I_{\{1\}}=\mathbb I_{\{a\}}=\Theta_g\mathbb I_{\{1\}}$ for $\Im f=\{a,b\}$ and $\Im g=\{a,c\}$. However, we have for any $\cP$-module $M$ with $\rk M_{a\to b}\neq \rk M_{a\to c}$ that
    $$\bm_{\cX_2,\cP}(M)_{(f,I_{\{1\}})}=\dim M_a-\rk M_{a\to b}\neq \dim M_a-\rk M_{a\to c}=\bm_{\cX_2,\cP}(M)_{(g,I_{\{1\}})}.$$
\item Now we consider $\cX= \cX'_3$ or $\cX''_3$ as follows 

\[
  \scalebox{0.8}{
  \begin{tikzpicture}
  
\begin{scope}[xshift=4cm]

\node(x) at (0,0) {$\cX'_3=$};
\node (A) at (1,0) {$1$};
\node (B) at (2,0.5) {$2$};
\node (C) at (2,-0.5) {$3$};
\draw[->] (A)--(B);
\draw[->] (A)--(C);
\end{scope}

\begin{scope}[xshift=8cm]
\node(x) at (0,0) {$\cX''_3=$};
\node (A) at (1,0.5) {$1$};
\node (B) at (1,-0.5) {$2$};
\node (C) at (2,0) {$3$};
\draw[->] (A)--(C);
\draw[->] (B)--(C);
\end{scope}
 \end{tikzpicture}}\] For embeddings $f: \cX \rightarrow \cP$, the intermediate extension functor yields interval modules that can be viewed as replacements for the more complicated modules obtained via induction or coinduction (see Example \ref{ex::f!U modules}). In particular, this provides a systematic way to avoid non-interval modules and obtain simpler building blocks for the invariant.

 We recall the description of the indecomposable $\cX_3'$- and $\cX_3''$-modules from the proof of Proposition~\ref{prop:X_3Rank}and Example~\ref{ex::f!U modules}:
the modules $\Theta_f(U)$ are either simples, rectangles, or interval modules 
of the form ${\mathbb I}_{[f(1),f(2)] \cup [f(1),f(3)]}$
when $\cX=\cX'_3$, and ${\mathbb I}_{[f(1),f(3)] \cup [f(2),f(3)]}$
when $\cX=\cX''_3$. In the case of a 2D-grid poset, these interval modules look as follows: 

 \[
  \scalebox{0.8}{
  \begin{tikzpicture}

\draw (0,0)--(6,0)--(6,4)--(0,4)--(0,0);

\draw[fill=blue!30] (1,1)--(4,1)--(4,2)--(2,2)--(2,3)--(1,3)--(1,1);

\node at (1,1) {$\bullet$};
\node at (0.8,0.8) {$f(1)$};
\node at (4,2) {$\bullet$};
\node at (4.2,2.2) {$f(2)$};
\node at (2,3) {$\bullet$};
\node at (2.2,3.2) {$f(3)$};
\node at (1.6,1.6) {$k$};
\node at (2,-0.5) {$\cX=\cX'_3$};

\begin{scope}[xshift=8cm]
\draw (0,0)--(6,0)--(6,4)--(0,4)--(0,0);

\draw[fill=blue!30] (1,2)--(2,2)--(2,1)--(4,1)--(4,3)--(1,3)--(1,2);

\node at (4.2,3.2) {$f(3)$};
\node at (0.6,2) {$f(2)$};
\node at (2,0.8) {$f(1)$};
\node at (1,2) {$\bullet$};
\node at (2,1) {$\bullet$};
\node at (4,3) {$\bullet$};
\node at (3,2) {$k$};
\node at (2,-0.5) {$\cX=\cX''_3$};
\end{scope}

\end{tikzpicture}}\]

\end{enumerate}
    
\end{ex}

\subsection{Families of embeddings}

In Example \ref{ex:hooks_rects}, we considered order-embeddings of {\em one fixed} poset $\cX$ into the poset $\cP$, and studied the set of 
{ \em all} 
indecomposables obtained from $\cX$. That turns out to introduce a lot of redundancy and some difficulty to describe the image of the invariant. 
 To avoid redundancy,  
we study now embeddings of several posets of increasing sizes, but limit attention to only some indecomposables (that have not been obtained from embedding of smaller posets previously). One of the advantage here is that we do not need to assume $\cX$ to be of finite representation type.

\begin{defn}
Let $\cP$ be a poset. A \emph{family of $\cP$-order embeddings} is a finite set $\cF:=\{(\cX_i,\rE_i)\}_i$ with each $\cX_i$ a finite poset (not necessarily representation finite) and each $\rE_i$ a subset of $\Emb_{\cX_i}^{\cP}$.  
For $\cF=\{(\cX_i,\rE_i)\}_i$ a family of $\cP$-order embeddings, we denote by $\rE:=\bigsqcup_{i}\rE_i$ and we define an invariant $\bm_{\cF}:\Ksp (\cP)\to \bZ^\rE$ by 
$$ \left(\bm_{\cF}(M)\right)_f:= {\rm mult} (\mathbb I_{\cX_i},f^*(M)) \quad \textrm{for }f\in \rE_i. $$
\end{defn}

 \begin{ex}\label{ex:iterated_mult}
\begin{enumerate}
    \item Let $\cF:=\{(\cX_1,\Emb_{\cX_1}^\cP),(\cX_2,\Emb_{\cX_2}^\cP)\}$ for $\cX_1$ and $\cX_2$ as in Example~\ref{Ex::mult2}. We show in Example \ref{Example5.13} that $\rE=\Emb_{\cX_1}^\cP\sqcup \Emb_{\cX_2}^\cP$ is in natural bijection with $\{\leq_{\cP}\}$ and via this bijection we have $\bm_{\cF}=\brk_{\cP}$.
    \item  The generalized rank invariant of \cite{KM} and the various compressed multiplicities of \cite{AENY2} can both be realized in the form $\bm_\cF$. More precisely:
    \begin{enumerate}
        \item Let $\mathfrak{I}$ be the set of posets which embed into $\cP$ as intervals, and for each $\cX_i \in \mathfrak{I}$ let $\rE_i \subseteq \Emb_{\cX_i}^\cP$ be the set of embeddings with interval image. Then, as a result of \cite[Lemma~3.1]{CL}, the generalized rank invariant of \cite{KM} is equal to $\bm_\cF$ for $\cF=\{(\cX_i,\rE_i)\mid \cX_i \in \mathfrak{I}\}$. 
        \item Suppose that $\cP$ is the product of two (finite) totally ordered sets. Let $\mathfrak{J}$ be the set of connected posets which do not contain any chains of length $\geq 3$ and can be embedded into $\cP$. If one computes the ``sink-source compression factor'' of \cite{AENY2} with respect to every interval subset of $\cP$, the result is precisely $\bm_\cF$ for $\cF={\{(\cX_i,\Emb_{\cX_i}^\cP)\mid \cX_i \in \mathfrak{J}\}}$. A similar result likewise holds for the ``corner-complete compression factors'', while the ``total compression factors'' coincide with the generalized rank invariant.
    \end{enumerate}
    See also \cite[Section~3.2]{BBH} and \cite[Example~4.14]{AENY2} for additional discussion.
\end{enumerate}
 
 \end{ex}

\begin{prop}

Let $\cP$ be a finite poset and $\cF:=\{(\cX_i,\rE_i)\}_i$ be a family of $\cP$-order embeddings.  We define an equivalence relation on $\rE = \bigsqcup_{i}\rE_i$ by $f\sim g$ if $\overline{\Im f}=\overline{\Im g}$. Then the invariant $\bm_{\cF}:\Ksp(\cP)\to \bZ^{\rE}$ induces an isomorphism of abelian groups
$$\langle [\mathbb I_{\overline{\Im f}}], \ f\in \rE\rangle \overset{\sim}{\longrightarrow} \bZ^{\rE/\sim}.$$
\end{prop}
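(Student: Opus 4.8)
The plan is to show that the map $\bm_{\cF}$, composed with the natural projection $\bZ^{\rE} \to \bZ^{\rE/\sim}$, sends $[\mathbb{I}_{\overline{\Im f}}]$ to a ``nearly diagonal'' collection of vectors in $\bZ^{\rE/\sim}$, and then to invoke Lemma~\ref{lemma:basisPoset} with an appropriate poset structure on $\rE/\sim$. First I would observe that $\bm_{\cF}$ does factor through $\bZ^{\rE/\sim}$: if $\overline{\Im f} = \overline{\Im g}$ with $f \in \rE_i$ and $g \in \rE_j$, then $f^*$ and $g^*$ restrict a module $M$ to (isomorphic copies of) the same convex subposet $\overline{\Im f} = \overline{\Im g}$ of $\cP$ --- wait, more care is needed here since $\Im f$ need not be convex; but the multiplicity of $\mathbb{I}_{\cX_i}$ in $f^* M$ should nonetheless depend only on $M$ restricted to $\overline{\Im f}$, because $\mathbb{I}_{\cX_i}$ is sincere on $\cX_i \cong \Im f$ and the relevant computation of $\mathrm{mult}(\mathbb{I}_{\cX_i}, f^* M)$ only sees the structure maps of $M$ between elements of $\Im f$. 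I would make this precise and conclude that $\bm_{\cF}$ induces a well-defined $\Phi: \Ksp(\cP) \to \bZ^{\rE/\sim}$.

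Next I would compute $\Phi([\mathbb{I}_{\overline{\Im f}}])$. Fix $f \in \rE_i$ and $g \in \rE_j$. By definition, the $g$-component of $\bm_{\cF}(\mathbb{I}_{\overline{\Im f}})$ is $\mathrm{mult}(\mathbb{I}_{\cX_j}, g^*\mathbb{I}_{\overline{\Im f}})$. For the ``diagonal'' entry, taking $g = f$ (or any $g \sim f$), the restriction $f^* \mathbb{I}_{\overline{\Im f}}$ has $\mathbb{I}_{\cX_i}$ as a summand with multiplicity exactly $1$: indeed $f^*\mathbb{I}_{\overline{\Im f}}$ is a module on $\cX_i$ that is $k$ at every vertex with identity structure maps wherever both the source and target lie in $\Im f$ and the corresponding elements of $\overline{\Im f}$ are comparable --- one checks this contains $\mathbb{I}_{\cX_i}$ with multiplicity one. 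For the ``off-diagonal'' control, I would argue that if $\mathrm{mult}(\mathbb{I}_{\cX_j}, g^*\mathbb{I}_{\overline{\Im f}}) \neq 0$, then $g^*\mathbb{I}_{\overline{\Im f}}$ has a sincere indecomposable summand, which forces every vertex of $\Im g$ to lie in $\overline{\Im f}$, hence $\overline{\Im g} \subseteq \overline{\Im f}$. This gives a preorder on $\rE/\sim$ (by reverse inclusion of convex hulls) that is a genuine partial order, and the displayed implication is precisely the second hypothesis of Lemma~\ref{lemma:basisPoset}.

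Finally, I would apply Lemma~\ref{lemma:basisPoset} to $Y = \rE/\sim$ with the partial order $[g] \leq [f] \iff \overline{\Im g} \subseteq \overline{\Im f}$ and $\varphi_{[f]} = \Phi([\mathbb{I}_{\overline{\Im f}}])$: the lemma yields that $\{\Phi([\mathbb{I}_{\overline{\Im f}}])\}_{[f] \in \rE/\sim}$ is a $\bZ$-basis of $\bZ^{\rE/\sim}$, which is exactly the claim that $\bm_{\cF}$ induces an isomorphism $\langle [\mathbb{I}_{\overline{\Im f}}] : f \in \rE \rangle \xrightarrow{\sim} \bZ^{\rE/\sim}$. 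The main obstacle I anticipate is the off-diagonal vanishing step: one must pin down precisely when a sincere indecomposable $\cX_j$-module can occur as a summand of the restriction of a (thin) interval module on a convex set, and rule out the possibility that $\mathbb{I}_{\cX_j}$ appears in $g^*\mathbb{I}_{\overline{\Im f}}$ even when $\overline{\Im g} \not\subseteq \overline{\Im f}$ --- this requires the observation that $g^*\mathbb{I}_{\overline{\Im f}}$ vanishes at any vertex of $\cX_j$ whose $g$-image is not in $\overline{\Im f}$, so a sincere summand is impossible unless $\Im g \subseteq \overline{\Im f}$, whence $\overline{\Im g} \subseteq \overline{\Im f}$ by convexity.
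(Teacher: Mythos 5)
Your second and third paragraphs correctly reproduce the paper's argument: for $g \in \rE_j$, compute $\bm_\cF(\mathbb{I}_{\overline{\Im f}})_g = \mathrm{mult}(\mathbb{I}_{\cX_j}, g^*\mathbb{I}_{\overline{\Im f}})$, observe that $g^*\mathbb{I}_{\overline{\Im f}} = \mathbb{I}_{g^{-1}(\overline{\Im f})}$ equals $\mathbb{I}_{\cX_j}$ exactly when $\Im g \subseteq \overline{\Im f}$ (equivalently $\overline{\Im g} \subseteq \overline{\Im f}$) and is not sincere otherwise, and apply Lemma~\ref{lemma:basisPoset} to $\rE/\sim$ with the partial order $[g] \leq [f] \iff \overline{\Im g} \subseteq \overline{\Im f}$. (Small slip: you say ``reverse inclusion'' in passing, but the displayed definition of $\leq$ on $\rE/\sim$ that you actually feed to the lemma is the correct, unreversed one.)

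The first paragraph, however, asserts something that is not true and that the paper explicitly disavows. You claim that $\bm_\cF$ itself factors through $\bZ^{\rE/\sim} \hookrightarrow \bZ^\rE$, i.e.\ that $\bm_\cF(M)_f = \bm_\cF(M)_g$ for all $M$ whenever $\overline{\Im f} = \overline{\Im g}$. The reasoning offered --- that both multiplicities depend only on the restriction $M|_{\overline{\Im f}} = M|_{\overline{\Im g}}$ --- is a non-sequitur: two different functions of $M|_{\overline{\Im f}}$ need not agree, and the functions $\mathrm{mult}(\mathbb{I}_{\cX_i}, f^*(-))$ and $\mathrm{mult}(\mathbb{I}_{\cX_j}, g^*(-))$ are genuinely different when $\Im f \ne \Im g$. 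The paper's remark immediately following this proposition reads ``It is not clear in general that the invariant $\bm_{\cF}$ factors through the injection $\bZ^{\rE/\sim}\to \bZ^\rE$,'' and this is precisely why the proposition is stated for the restriction of $\bm_\cF$ to the subgroup $\langle [\mathbb{I}_{\overline{\Im f}}] \rangle$ rather than for all of $\Ksp(\cP)$. Fortunately your argument can be salvaged with essentially no extra work, since you only ever evaluate $\Phi$ on the generators $[\mathbb{I}_{\overline{\Im f}}]$, and your own diagonal/off-diagonal computation already shows that $\bm_\cF(\mathbb{I}_{\overline{\Im f}})$ lies in the subgroup of functions constant on $\sim$-classes: its $g$-entry is $1$ if $\overline{\Im g} \subseteq \overline{\Im f}$ and $0$ otherwise, hence depends only on $[g]$. (To see this you should also record the easy converse to your off-diagonal bound: if $\overline{\Im g} \subseteq \overline{\Im f}$ then $g^*\mathbb{I}_{\overline{\Im f}} = \mathbb{I}_{\cX_j}$, so the multiplicity is exactly $1$.) Delete the claim about the whole invariant factoring, replace it by this observation about the generators, and the rest of your proof stands and matches the paper's.
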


\begin{proof}

For $f\in \rE_i$ and $g \in \rE_j$, we have 
$$\left(\bm_{\cF}(\mathbb I_{\overline{\Im f}})\right)_g= {\rm mult}_{\cX_j}(\mathbb I_{\cX_j}, g^{*}\mathbb I_{\overline{\Im f}}).$$
The $\cX_j$-modules $\mathbb I_{\cX_j}$ and $ g^{*}\mathbb I_{\overline{\Im f}}=\mathbb{I}_{g^{-1}(\overline{\Im f})}$ are both interval modules hence indecomposable. Therefore we obtain $\left(\bm_{\cF}(\mathbb I_{\overline{\Im f}})\right)_g$ is $0$ or $1$ and 
$$\begin{array}{rcl}
\left(\bm_{\cF}(\mathbb I_{\overline{\Im f}})\right)_g= 1 \ & \Leftrightarrow  & \bI_{\cX_j}= \bI_{g^{-1}(\overline{\Im f})}\\ & \Leftrightarrow & \cX_j=g^{-1}(\overline{\Im f})\\  & \Leftrightarrow & \cX_j\subseteq g^{-1}(\overline{\Im f})\\ & \Leftrightarrow & \Im g\subseteq \overline{\Im f} \\ & \Leftrightarrow & \overline{\Im g}\subseteq \overline{\Im f}.\end{array}$$
Then by defining an order relation $\leq$ on $\rE/\sim$ by 
$$g\leq f \quad \Leftrightarrow \quad \overline{\Im g}\subseteq \overline{\Im f},$$
we conclude using Lemma \ref{lemma:basisPoset}.
\end{proof}

    It is not clear in general that the invariant $\bm_{\cF}$ factors through the injection $\bZ^{\rE/\sim}\to \bZ^\rE$. However, it can be shown in some cases.

    \begin{ex}\label{ex:bases_mobius}
    \begin{enumerate}
        \item Consider the setup of Example~\ref{ex:iterated_mult}(1). Then $\{\mathbb{I}_{\overline{\Im f}} \mid f \in \rE\}$ is the set of rectangle modules. As previously mentioned, it is shown in \cite[Corollary~2.7]{BOO} that this forms a basis of $\bm_\cF$.
        \item Consider the setup of Example~\ref{ex:iterated_mult}(2a) or (2b). In both cases, $\{\mathbb{I}_{\overline{\Im f}} \mid f \in \rE\}$ is the set of interval modules. Both papers show that these form a basis for the corresponding invariant $\bm_\cF$ using M\"obius inversion arguments. (This is also established for the ``corner-complete compression factors'' in \cite{AENY2}.)
    \end{enumerate}
    \end{ex}

    We now show that the perspective of iterated embeddings, combined with the intermediate extension functor, yields explicit bases for the associated invariants. In particular, these bases consist of interval modules and generalize the classical rectangle-module basis for the rank invariant.

\begin{thm}\label{thm::IntervalBasis}
        Let $\cP$ be a finite poset, and let $\cF=\{(\cX_i,\rE_i)\}$ be a family of $\cP$-order embeddings such that there is no chain of length $\geq 3$ in  $\cX_i$ for each $i$. Denote $\rE = \bigsqcup_{i}\rE_i$. Then the invariant $\bm_\cF$ admits a basis 
        $$\cR_\rE:= \{\mathbb I_{\overline{\Im f}} \mid f\in \rE\}\subseteq \Ksp(\cP)$$ 
        given by the modules obtained via the intermediate extension functor from the characteristic indecomposable modules of the $\cX_i$. 
\end{thm}

\begin{proof}
We show that in this case the equivalence relation on $\rE$ is trivial and conclude by the previous proposition. Indeed let $f\in \rE_i$ and $g\in \rE_j$ such that $f\sim g$, that is $\overline{\Im f}=\overline{\Im g}$. Then let $x\in \cX_i$. By hypothesis $x$ is either minimal or maximal. Assume first it is minimal.  Then $f(x)\in \Im f\subseteq \overline{\Im f}=\overline{\Im g}$, so  there exists $y,y'\in \cX_j$ with 
$g(y)\leq f(x)\leq g(y')$. Since $g(y)$ is in $\Im g$, it is in the convex hull of $\Im f$, but since $x$ is minimal we have $f(x)=g(y)$. By a similar argument, if $x$ is maximal, then $f(x)$ is in $\Im g$. Therefore we obtain $\Im f\subseteq \Im g$, and by symmetry we get $\Im f=\Im g$, that is $f=g$. 
\end{proof}

\begin{rem}
It is an interesting problem to find a representation-theoretic argument generalizing the basis result for the generalized rank invariant established using M\"obius inversion in \cite{KM}. In particular, note that the ``zigzags'' used in \cite{DKM} to compute the generalized rank invariant admit injective poset morphisms into $\cP$, but that these morphisms are generally not full as functors between poset categories, and thus not embeddings. It would be interesting to determine the extent to which the theory developed in this paper could be adapted to such a setting.
\end{rem}

\subsection{Comparing $\bm$-invariants}

Let $\cP$ be a finite poset, $\cX$ be a representation-finite poset and $\rE$ be a subset of $\Emb_\cX^\cP$. As mentionned before, in general the rank of the invariant $\bm_{\cX,\cP}^\rE$ is much smaller than $|\rE\times \Ind\cX|$. The aim of this section is to find a family $\cF=\{(\cX_i,\rE_i)_i\}$ of $\cP$-order embeddings with $\sum_i|\rE_i|\leq |\rE\times \Ind\cX|$ and with $\bm_{\cF}\sim \bm_{\cX,\cP}^\rE$. We start with the example coming from \ref{rem::multX2_redundant} to illustrate the strategy.

\begin{ex}\label{Example5.13}
Let $\cP$ be a finite poset and $\cF=\{(\cX_1,\Emb_{\cX_1}^{\cP}),(\cX_2,\Emb_{\cX_2}^{\cP})\}$ given as follows.

\[
  \scalebox{0.8}{
  \begin{tikzpicture}
  
\begin{scope}[xshift=-3cm]
 \node(X) at (0,0) {$\cX_1=$};
  \node (B) at (1,0) {$1$};
\end{scope}

\begin{scope}[xshift=0cm]
   \node(X) at (0,0) {$\cX_2=$};
  \node (A) at (1,0) {$1$};
  
\node (B) at (3,0) {$2$};

\draw[->] (A)--(B);
\end{scope} 

 \end{tikzpicture}}\]

One can define a bijection $\{\leq_{\cP}\}\to \Emb_{\cX_1}^\cP\sqcup \Emb_{\cX_2}^\cP$ sending $(a,a)\in \{\leq_\cP\}$ to $f_a\in \Emb_{\cX_1}^\cP$ with $f_a(1)=a$ and $(a,b)$ to $f_{ab}:\cX_2\to \cP$ with $f_{ab}(1)=a$ and $f_{ab}(2)=b$. Then it is straightforward to check that we have a commutative square
$$\xymatrix{\Ksp(\cP)\ar[rr]^-{\bm_{\cF}}\ar@{=}[d] & & \bZ^{\rE_1\sqcup \rE_2}\ar[d]^\sim \\ \Ksp(\cP)\ar[rr]^-{\brk_\cP} & & \bZ^{\{\leq_\cP\}}}$$
showing that $\bm_{\cF}\sim \brk_\cP$. Combining this with Corollary \ref{corX_2Rank}, we obtain that if $\cP$ is connected with at least $2$ elements then we have an equivalence 
$$\bm_{\cX_2,\cP}\sim \bm_{\cF}.$$
As noted in Remark \ref{rem::multX2_redundant},
computing $\bm_{\cF}$ is roughly three times more efficient than computing $\bm_{\cX_2,\cP}.$

This equivalence can also be seen directly using the following arguments.  Consider the family of $\cX_2$-order embeddings $$\cF_{\cX_2}:=((\cX_1,\Emb_{\cX_1}^{\cX_2}),(\cX_2,\Emb_{\cX_2}^{\cX_2})).$$
Then we have $\Emb_{\cX_1}^{\cX_2}=\{f_1,f_2\}$ with $\Im f_x=\{x\}$, and  $\Emb_{\cX_2}^{\cX_2}=\{{\rm id}\}$. We immediately check for $M$ in $\mod \cX_2$
$$ \left(\bm_{\cF_{\cX_2}}(M)\right)_f  = \left\{ \begin{array}{cl} \dim M_x & \textrm{ if }f=f_x \\ \rk M_{1\to 2} & \textrm{ if }f={\rm id} \end{array}\right.$$
Then one has a commutative square

$$\xymatrix{\Ksp(\cX_2)\ar[rr]^-{\bm_\cX}\ar@{=}[dd] & & \bZ^{\{\bI_{\{2\}},\bI_{\{1,2\}},\bI_{\{1\}}\}}\\ & & \\ \Ksp(\cX_2)\ar[rr]^-{\bm_{\cF_\cX}} & & \bZ^{\{f_1,f_2,{\rm id}\}}\ar[uu]_{\Phi=\begin{pmatrix}0 & 1 & -1\\ 0 & 0 & \;\;\;1\\ 1 & 0 & -1\end{pmatrix}}}$$
Now fix a $g\in \Emb_{\cX_1}^\cP$. Since $\cP$ is connected and has at least $2$ elements, this embedding $g$ can be written as $g=f\circ f_{x}$ with $f\in \Emb_{\cX_2}^\cP$ and $f_x\in \Emb_{\cX_1}^{\cX_2}$ ($x=1,2$). Then one can write \begin{align*} (\bm_{\cF}(M))_g  &= (\bm_{\cX_1}(g^*M))_{\bI_{\{1\}}}\\  
& =  (\bm_{\cX_1}((f\circ f_x)^*M))_{\bI_{\{1\}}}\\ 
& = (\bm_{\cX_1}(f_x^*(f^*M)))_{\bI_{\{1\}}}\\ & =  (\bm_{\cF_{\cX_2}}(f^*M))_{f_x}\\ & =  \left(\Phi^{-1}\circ \bm_\cX(f^*M)\right)_{f_x} \\ & =  \left(\Phi^{-1}\circ \bm_{\cX,\cP}(M)_{(f,-)}\right)_{f_x}.\end{align*}
A similar argument can be used if $g\in \Emb_{\cX_2}^\cP$ which gives a direct proof that the invariants $\bm_{\cX_2,\cP}$ and $\bm_\cF$ are equivalent.
\end{ex}

Using the same kind of arguments, one can show the following more general result.

\begin{prop}\label{prop::equivalenceX-FX}

Let $\cP$ be a finite poset, $\cX$ be a representation-finite poset and $\rE$ be a subset of $\Emb_\cX^\cP$. Let $\cF_\cP=\{(\cX_i,\rE_i)_i\}$ be a family of $\cP$-order embeddings. We denote by $\cF_{\cX}=\{(\cX_i,\Emb_{\cX_i}^{\cX})\}_i$ the family of $\cX$-order embeddings of the posets $\cX_i$.
    \begin{enumerate}
    \item If for any $(\cX_i,\rE_i) \in \cF$ the composition map $\Emb_{\cX_i}^\cX\times \Emb_{\cX}^\cP\rightarrow \Emb_{\cX_i}^\cP$ restricts to a map $\Emb_{\cX_i}^\cX\times \rE\rightarrow \rE_i$, then 
    $$\bm_{\cF_{\cX}}\geq \bm_{\cX}\quad \Rightarrow\quad \bm_{\cF_\cP}\geq \bm_{\cX,\cP}^{\rE}.$$ 
    \item If moreover the restriction $\Emb_{\cX_i}^\cX\times \rE\rightarrow \rE_i$ is surjective, then 
    $$\bm_{\cF_{\cX}}\simeq \bm_{\cX}\quad \Rightarrow\quad \bm_{\cF_\cP}\simeq\bm_{\cX,\cP}^{\rE}.$$
    \end{enumerate}
\end{prop}

\begin{proof}
(1) By hypothesis there exist integers $\alpha_{(g,U)}$ for any $g\in \bigsqcup_i \Emb_{\cX_i}^\cX$ and $U\in \Ind (\cX)$ such that for any $M\in \mod \cX$
\[\left(\bm_\cX(M)\right)_U = \sum_{g\in \bigsqcup_i\Emb_{\cX_i}^\cX} \alpha_{(g,U)}\left(\bm_{\cF_\cX}(M)\right)_{g}, 
\] that is 
\[ {\rm mult}_{\cX}(U,M) = \sum_{g\in \bigsqcup_i\Emb_{\cX_i}^\cX} \alpha_{(g,U)}{\rm mult}_{\cX_i}(\mathbb I_{\cX_i},g^* M).
\]

Now for $(f,U)\in \rE\times \Ind(\cX)$,  and $M\in \mod \cP$ we have 
$$\begin{array}{rcl}
\left(\bm_{\cX,\cP}^\rE(M)\right)_{(f,U)} & = & {\rm mult}_{\cX}(U,f^*M) \\ 
& = & \sum_{g\in \bigsqcup_i \Emb_{\cX_i}^\cX} \alpha_{(g,U)}{\rm mult}_{\cX_i}(\mathbb I_{\cX_i}, g^*(f^*M))\\
 & = & \sum_{g\in \bigsqcup_i\Emb_{\cX_i}^\cX} \alpha_{(g,U)}{\rm mult}_{\cX_i}(\mathbb I_{\cX_i}, (f\circ g)^* M)) \\ & = & \sum_{h\in \bigsqcup_i \rE_i} \beta_{(h,f,U)} \left(\bm_{\cF_{\cP}}(M)\right)_h \end{array}$$
 where $\beta_{(h,f,U)}=\alpha_{(g,U)}$when $h=f\circ g$. Since $f$ is an embedding the value of $\beta_{(h,f,U)}$ is well-defined.

 (2) Now assume that there exist integers $\alpha'_{(g,U)}$ for $g\in\bigsqcup_i \Emb_{\cX_i}^\cX$ and $U\in \Ind \cX$ such that for any $M\in \mod \cX$
\[\left(\bm_{\cF_{\cX}}(M)\right)_g  = \sum_{U\in \Ind\cX} \left(\bm_{\cX}(M)\right)_U, \]
that is 
\[ {\rm mult}_{\cX_i}(\mathbb I_{\cX_i},g^*M) =  \sum_{U\in \Ind(\cX)} \alpha'_{(g,U)} {\rm mult}_{\cX}(U,M).\]

Now for $h\in \rE_i$ fix $h_1\in \Emb_{\cX_i}^\cX$ and $h_2\in \rE$ such that $h=h_2\circ h_1 $. Then we have 

\[\begin{array}{rcl}
\left(\bm_{\cF_\cP}(M)\right)_h & =   & {\rm mult}_{\cX_i}(\mathbb I_{\cX_i},h^*M)  \\
 & = & {\rm mult}_{\cX_i}(\mathbb I_{\cX_i},h_1^*(h_2^*M)) \\ 
 & = & \sum_{U\in \Ind\cX} \alpha'_{(h_1,U)}{\rm mult}_\cX (U,h_2^*M) \\
     & = &\sum_{f\in \rE}\sum_{U\in \Ind\cX} \beta'_{(h,f,U)} \left(\bm_{\cX,\cP}^\rE (M)\right)_{(f,U)}
\end{array}\]
where $\beta'_{(h,f,U)}=\alpha'_{(h_1,U)}$ when $f=h_2$. 
\end{proof}

We conclude by collecting several consequences of Proposition~\ref{prop::equivalenceX-FX}.

\begin{prop}

Let $\cP$ be a finite connected poset and consider the following families of order embeddings $$\cF:=((\cX_1,\Emb_{\cX_1}^\cP),(\cX_2,\Emb_{\cX_2}^\cP), (\cX'_3,\Emb_{\cX'_3}^\cP)), \quad \left(\textrm{resp.} \ \cF:=((\cX_1,\Emb_{\cX_1}^\cP),(\cX_2,\Emb_{\cX_2}^\cP), (\cX''_3,\Emb_{\cX''_3}^\cP))\right),$$  where :
\[
  \scalebox{0.8}{
  \begin{tikzpicture}  
\begin{scope}[xshift=-3cm]
  \node (X) at (0,0) {$\cX_1=$};
  \node (B) at (1,0) {$1$};
\end{scope}

\begin{scope}[xshift=0cm]
   \node (X) at (0,0) {$\cX_2=$};
  \node (A) at (1,0) {$1$}; 
\node (B) at (2,0) {$2$};
\draw[->] (A)--(B);
\end{scope}

\begin{scope}[xshift=4cm]
\node (x) at (0,0) {$\cX'_3=$};
\node (A) at (1,0) {$1$};
\node (B) at (2,0.5) {$2$};
\node (C) at (2,-0.5) {$3$};
\draw[->] (A)--(B);
\draw[->] (A)--(C);
\end{scope}

\begin{scope}[xshift=8cm]
\node(x) at (0,0) {$\cX''_3=$};
\node (A) at (1,0.5) {$1$};
\node (B) at (1,-0.5) {$2$};
\node (C) at (2,0) {$3$};
\draw[->] (A)--(C);
\draw[->] (B)--(C);
\end{scope}

\end{tikzpicture}} 
\]
 For $\cX=\cX'_3$ (resp. $\cX=\cX''_3$) assume the following  
$$(*)\quad \textrm{for any $a<b$ in $\cP$ there exists $f\in \Emb^\cP_{\cX}$ with $\{a,b\}\in \Im f$}$$ 
 
Then we have an equivalence between the invariants $\bm_{\cX,\cP}$ and $\bm_{\cF'}$. 
\end{prop}

\begin{proof}We prove the statement for $\cX=\cX'_3$; the proof is similar for $\cX=\cX''_3$.
We consider the family $$\cF_\cX=((\cX_1,\Emb_{\cX_1}^\cX),\cX_2,\Emb_{\cX_2}^\cX),\cX'_3,\Emb_{\cX'_3}^\cX))$$ and show the equivalence between $\bm_{\cX}$ and $\bm_{\cF_\cX}$. 

We have $$\Emb_{\cX_1}^{\cX}=\{f_1,f_2,f_3\}, \quad \Emb_{\cX_2}^{\cX}=\{f_{12},f_{13}\}, \quad \textrm{ and }\Emb_{\cX'_3}^{\cX}=\{{\rm id}\} $$ where $f_a(1)=i$, $f_{ab}(1)=a$ and $f_{ab}(2)=b$.
Then one easily checks that we have a commutative square

$$\xymatrix{\Ksp(\cX)\ar[rr]^-{\bm_\cX}\ar@{=}[d] & & \bZ^{\{\bI_{\{1\}},\bI_{\{2\}},\bI_{\{3\}},\bI_{[1,2]},\bI_{[1,3]},\bI_{\cX'_3} \}}\\  \Ksp(\cX)\ar[rr]^-{\bm_{\cF_\cX}} & & \bZ^{\{f_1,f_2,f_3,f_{12},f_{13},{\rm id}\}}\ar[u]_{\Phi}}$$
with 
 
$$\Phi=\begin{pmatrix} 1 & 0 & 0 & -1 & -1 & 1 \\ 0 & 1 & 0 & -1 & 0 & 0\\ 0 & 0 & 1 & 0 & -1 & 0\\ 0 & 0 & 0 & 0  & 1  & -1\\ 0 & 0 & 0 & 1 & 0 & -1\\ 0 & 0 & 0 & 0 & 0 & 1 \end{pmatrix}.$$

Now condition $(*)$ and the connectedness of $\cP$ ensures that the composition map $\Emb_{\cX_i}^\cX\times \Emb_{\cX}^\cP\to \Emb_{\cX_i}^\cP$ is surjective for any $i$, and we can conclude using Proposition \ref{prop::equivalenceX-FX}  

\end{proof}

\begin{coro}\label{thm::eqX3-family} Let $\cP$ be a finite connected poset, and let $\cX$ be one of the following posets

\[
  \scalebox{0.8}{
  \begin{tikzpicture}

\begin{scope}[xshift=4cm]

\node(x) at (0,0) {$\cX'_3=$};
\node (A) at (1,0) {$1$};
\node (B) at (2,0.5) {$2$};
\node (C) at (2,-0.5) {$3$};
\draw[->] (A)--(B);
\draw[->] (A)--(C);
\end{scope}

\begin{scope}[xshift=8cm]
\node(x) at (0,0) {$\cX''_3=$};
\node (A) at (1,0.5) {$1$};
\node (B) at (1,-0.5) {$2$};
\node (C) at (2,0) {$3$};
\draw[->] (A)--(C);
\draw[->] (B)--(C);
\end{scope}
 \end{tikzpicture}}\]
 Assume the following  
$$(*)\quad \textrm{for any $a<b$ in $\cP$ there exists $f\in \Emb_\cX^\cP$ with $\{a,b\}\in \Im f$}$$ 
then the set 
$\cR_\rE = \{\Theta_f(U)\mid(f,U)\in \rE \times \rI\}$
forms a basis of $\bm_{\cX,\cP}$.
\end{coro}

\begin{proof}This is a direct consequence of the above proposition with Theorem \ref{thm::IntervalBasis}. Note that the basis
$$\{\bI_{\overline{\Im f}} \mid f\in \Emb_{\cX_i}^\cP, i=1,2,3\}$$ contains all simples, all rectangles, and all generalized rectangles as described in Example \ref{ex:hooks_rects} (2). 
One easily checks that the set 
$\cR_\rE$ contains exactly the simples, the rectangles and the generalized rectangles. The fact that it contains all the rectangles comes from condition $(*)$

\end{proof}

\begin{thm}\label{thm::eqX4-family} Let $\cP$ be a finite connected poset, and let $\cX$ be the following poset

\[
  \scalebox{0.8}{
  \begin{tikzpicture}
  \node at (-1,0) {$\cX=$};
  \node (A) at (0,0) {$a$};
  \node (B) at (1,0.5) {$b$};
  \node (C) at (1,-0.5) {$c$};
\node (D) at (2,0) {$d$};

\draw[->] (A)--(B);
\draw[->] (B)--(D);

\draw[->] (A)--(C);
\draw[->] (C)--(D);

 \end{tikzpicture}},\]
 and $\cX_1$, $\cX_2$, $\cX'_3$ and $\cX''_3$ as above.
Denote by $\rE:=\Emb_{\cX}^\cP$.
 Assume that the composition map $$ \Emb_{\mathcal{Y}}^\cX\times \rE\longrightarrow  \Emb_{\mathcal{\mathcal{Y}}}^\cP$$ is surjective  for $\mathcal{Y}=\cX_1$, $\cX_2$, $\cX'_3$ and $\cX''_3$.
Then the set 
$\cR_\rE = \{\Theta_f(U)\mid(f,U)\in \rE \times \Ind (\cX)\}$
forms a basis of $\bm_{\cX,\cP}$.
\end{thm}

\begin{proof}
We introduce the family of embeddings 
$$\cF=\left((\cX_1,\Emb_{\cX_1}^\cP), (\cX_2,\Emb_{\cX_2}^\cP),(\cX'_3,\Emb_{\cX'_3}^\cP),(\cX''_3,\Emb_{\cX''_3}^\cP)\right)$$ and 
$$\cF_\cX=\left((\cX_1,\rE_1:=\Emb_{\cX_1}^\cX), (\cX_2,\\rE_2:=\Emb_{\cX_2}^\cX),(\cX'_3,\rE'_3:=\Emb_{\cX'_3}^\cX),(\cX''_3,\rE''_3:=\Emb_{\cX''_3}^\cX)\right).$$ We first prove the equivalence between $\bm_{\cX}$ and $\bm_{\cF_\cX}$. Indeed one can check that $$|\rE_1|= 4, \ |\rE_2|=5, \ |\rE'_3|=1, \ |\rE''_3|=1$$ and construct an invertible map $\Phi:\bZ^{\rE_1\sqcup \rE_2\sqcup \rE'_3\sqcup \rE''_3}$ to $\bZ^{\Ind\cX}$ such that $\Phi\circ\bm_{\cF_\cX}=\bm_\cX$. Then we can apply Proposition \ref{prop::equivalenceX-FX} to obtain an equivalence $$\bm_{\cF}\sim \bm_{\cX,\cP}.$$
By Theorem \ref{thm::IntervalBasis} we obtain a basis of $\bm_{\cX,\cP}$ by considering $\cP$-modules of the form $\Theta_f(U)$ for $f\in \rE_1$ (resp. $\rE_2$, $\rE'3$, $\rE''_3$)  and $U=\bI_{\cX_1}$ (resp. $\bI_{\cX_2}$, $\bI_{\cX'_3}$, $\bI_{\cX''_3}$). We can then check that these modules correspond to $\Theta_f(U)$ where $f\in \rE$, and $U\in \Ind\cX$.

\end{proof}

We finish by giving more examples of families of embeddings recovering known invariants using Proposition \ref{prop::equivalenceX-FX}. 

\begin{ex}
\begin{enumerate}

\item 
Let $\cF$ be the family of posets

 \[
  \scalebox{0.8}{
  \begin{tikzpicture}
  
\begin{scope}[xshift=-3cm]
  \node(X) at (0,0) {$\cX_1=$};
  \node (B) at (1,0) {$1$};
\end{scope}

\begin{scope}[xshift=0cm]
   \node(X) at (0,0) {$\cX_2=$};
  \node (A) at (1,0) {$1$};
  
\node (B) at (2,0) {$2$};

\draw[->] (A)--(B);
\end{scope}

\begin{scope}[xshift=4cm]

\node(x) at (0,0) {$\cX'_3=$};
\node (A) at (1,0) {$1$};
\node (B) at (2,0.5) {$2$};
\node (C) at (2,-0.5) {$3$};
\draw[->] (A)--(B);
\draw[->] (A)--(C);
\end{scope}

\begin{scope}[xshift=9cm]
\node at (-1,0) {$\cX_4=$};
  \node (A) at (0,0) {$1$};
  \node (B) at (1,0) {$2$};
  \node (C) at (2,0.5) {$3$};
  
\node (D) at (2,-0.5) {$4$};

\draw[->] (A)--(B);
\draw[->] (B)--(C);
\draw[->] (B)--(D);
\end{scope}
 \end{tikzpicture}}\] 
and let $\cX:=\cX_4.$ Then one can check that $$|\Emb_{\cX_1}^{\cX}|= 4, \ |\Emb_{\cX_2}^{\cX}|=5, \ |\Emb_{\cX'_3}^{\cX}|=2, \ |\Emb_{\cX_4}^{\cX}|=1,$$
which gives rank $12$ for the invariant $\bm_{\cF_\cX}.$
The invariant $\bm_\cX$ is complete and also of rank $12$. On can even prove that we have an equivalence 

$$ \bm_\cX\sim \bm_{\cF_\cX}.$$

 Indeed one can observe  that $\bm_{\cF_\cX}$ determines all the interval indecomposable summands (using inclusion and exclusion principle). Then, this determination also uniquely determines the multiplicity of the only non-interval indecomposable summand, showing that $\bm_{\cF_\cX}$ is also complete.

\item Let $\cF$ be the following family of posets, and $\cX=\cX'_4$.
 \[
  \scalebox{0.8}{
  \begin{tikzpicture}
  
\begin{scope}[xshift=-3cm]
  \node(X) at (0,0) {$\cX_1=$};
  \node (B) at (1,0) {$1$};
\end{scope}

\begin{scope}[xshift=0cm]
   \node(X) at (0,0) {$\cX_2=$};
  \node (A) at (1,0) {$1$};
  
\node (B) at (2,0) {$2$};

\draw[->] (A)--(B);
\end{scope}

\begin{scope}[xshift=4cm]

\node(x) at (0,0) {$\cX'_3=$};
\node (A) at (1,0) {$1$};
\node (B) at (2,0.5) {$2$};
\node (C) at (2,-0.5) {$3$};
\draw[->] (A)--(B);
\draw[->] (A)--(C);
\end{scope}

\begin{scope}[xshift=9cm]
\node at (-1,0) {$\cX'_4=$};
\node (A) at (0,0) {$a$};
  \node (B) at (1,0.5) {$b$};
  \node (C) at (1,0) {$c$};
  
\node (D) at (1,-0.5) {$d$};

\draw[->] (A)--(B);
\draw[->] (A)--(C);
\draw[->] (A)--(D);

\end{scope}
 \end{tikzpicture}}\]

 Then one can check that 

$$|\Emb_{\cX_1}^{\cX}|= 4, \ |\Emb_{\cX_2}^{\cX}|=3, \ |\Emb_{\cX'_3}^{\cX}|=3, \ |\Emb_{\cX'_4}^{\cX}|=1.$$

Here the invariant $\bm_{\cX}$ is strictly finer than the invariant $\bm_{\cF_\cX}$.

 \end{enumerate}

\end{ex}

\end{document}